%

\documentclass[aap,MSNbibl,dvips]{arximspdf}
\usepackage{graphicx}
%

\doi{10.1214/13-AAP946} 
\volume{24}
\issue{3}
\pubyear{2014}
\firstpage{1226}
\lastpage{1268}

\makeatletter

\newcommand{\calI}{\mathcal{I}}
\newcommand{\bi}{\operatorname{bia}}
\newcommand{\ba}{\operatorname{bal}}
\newcommand{\bE}{\mathbf{E}}
\newcommand{\bP}{\mathbf{P}}

\newcommand{\ve}{\varepsilon}
\newcommand{\al}{\alpha}
\newcommand{\ep}{\varepsilon}
\newcommand{\ka}{\kappa}
\newcommand{\la}{\lambda}
\newcommand{\vp}{\varphi}
\newcommand{\om}{\omega}
\newcommand{\tila}{\tilde{a}}
\newcommand{\tsig}{\tilde{\sigma}}
\newcommand{\ttau}{\tilde{\tau}}
\newcommand{\tka}{\tilde{\ka}}
\newcommand{\tgamma}{\tilde{\gamma}}
\newcommand{\tphi}{\tilde{\phi}}

\newcommand{\tX}{\tilde{X}}
\newcommand{\hY}{\hat{Y}}
\newcommand{\tT}{\tilde{T}}
\newcommand{\tr}{\tilde{r}}
\newcommand{\eqref}[1]{(\ref{#1})}

\newproclaim{example}{Example}

\newtheorem{lemma}{Lemma}[section]
\newtheorem{proposition}{Proposition}[section]
\newtheorem{theorem}{Theorem}[section]

\newproclaim{definition}{Definition}
\makeatother

\begin{document}
\begin{frontmatter}

\title{Stochastically-induced bistability in chemical reaction
systems\thanksref{T1}}
\runtitle{Stochastic bistability in chemical reaction systems}
\thankstext{T1}{Supported by the New Researchers Grant (F00637-126232)
from FQRNT (Fonds de recherche du Qu\'ebec Nature et technologies).}

\begin{aug}
\author[A]{\fnms{John K.} \snm{McSweeney}\ead[label=e2]{mcsweene@rose-hulman.edu}}
\and
\author[B]{\fnms{Lea} \snm{Popovic}\corref{}\ead[label=e1]{lpopovic@mathstat.concordia.ca}}
\runauthor{J. K. McSweeney and L. Popovic}
\affiliation{Rose-Hulman Institute of Technology and Concordia University}
\address[A]{Department of Mathematics\\
Rose-Hulman Institute of Technology\\
5500 Wabash Ave.\\
Terre Haute, Indiana 47803\\
USA\\
\printead{e2}}
\address[B]{Department of Mathematics and Statistics\\
Concordia University\\
1455 de Maisonneuve Blvd. West\\
Montreal, QC H3G 1M8\\
Canada\\
\printead{e1}}
\end{aug}

\received{\smonth{5} \syear{2012}}
\revised{\smonth{6} \syear{2013}}

%
\begin{abstract}
We study a stochastic two-species chemical reaction system with two
mechanisms. One mechanism consists of chemical interactions which
govern the overall drift of species amounts in the system; the other
mechanism consists of resampling, branching or splitting which makes
unbiased perturbative changes to species amounts. Our results show that
in a system with a large but bounded capacity, certain combinations of
these two types of interactions can lead to stochastically-induced
bistability. Depending on the relative magnitudes of the rates of these
two sets of interactions, bistability can occur in two distinct ways
with different dynamical signatures.
\end{abstract}
%
%
\begin{keyword}[class=AMS]
\kwd{60J27}
\kwd{60J28}
\kwd{60J60}
\kwd{60F10}
\kwd{60F17}
\kwd{92C45}
\kwd{92C37}
\kwd{80A30}
\end{keyword}
\begin{keyword}
\kwd{Reaction networks}
\kwd{chemical reactions}
\kwd{resampling}
\kwd{bistability}
\kwd{Markov chains}
\kwd{large deviations}
\kwd{stochastic switching}
\kwd{scaling limits}
\end{keyword}

\end{frontmatter}

\section{Introduction}

Recent advances in measurement technology have enabled scientists to
observe molecular dynamics in single cells and to study the
cell-to-cell variability (Brehm-Stecher and Johnson \cite{B-S}).
Many studies have shown that variability observed in genetically
identical cells is due to noise that is inherent to biochemical
reactions happening within each cell (McAdams and Arkin \cite{MA},
Elowitz et al. \cite{El}).
Understanding how intracellular mechanisms are affected by this
intrinsic noise is an important challenge for systems biology.
Determining what role this noise plays in creating phenotypic
heterogeneity has many practical consequences (Avery \cite{Av}).

An important feature in cellular dynamics is bistability, the
alternation between two different stable states for a molecular species.
This feature is present in many gene-expression systems, where a gene
alternates between two types of states (``on'' and ``off'') regulating
the production of a protein. It is also present in many phosphorylation
switches in signaling pathways.
Causes for bistable behavior can be deterministic, but many bistable
switching patterns are enabled by stochastic fluctuations.
It is often assumed that it follows from the existence of two stable
equilibria in the deterministic drift and the ability of infrequent
large fluctuations to pull the system from a basin of attraction of one
equilibirum to the other.
There are also cases of chemical dynamics in which bistability is not
possible in the deterministic model, but is possible in the stochastic
model of the same chemical reaction system (Samoilov et al. \cite{SPA},
Bishop and Qian \cite{BQ}). Metastable behavior is also sometimes
observed (Robert et al. \cite{LydR}).

In addition to noise inherent to biochemical reactions, cells also
experience fluctuations in molecular composition due to cell division.
This source of noise is significant, and also difficult to separate
from the noise due to biochemical reactions (Huh and Paulsson \cite
{Pa1,Pa2}).
In this paper we investigate under what conditions a system of chemical
reactions in a cell can use these two sources of noise to exhibit
bistable or metastable behavior in their molecular composition.

We would like to emphasize a couple of points observed in the
literature. First, the rate of switching between two states is
important for cellular development and survival (Acar et al. \cite{vO}).
Time-scales on which transitions between stable states happen varies
whole orders of magnitude over different systems. For example, in the
lysogenic state of \textit{E. coli} the time-scale of switching between
states is slow (Zong et al. \cite{ZSSSG})---once per $10^8$ cell
generations---as determined from the activity of a controlling protein.
In the case of gene expression in \textit{S. cerevisiae} the switching
time-scale is fast (Kaufmann et al. \cite{KYMvO})---once per $8.33$
generations---and switching times between mother and daughter cells are
correlated in a way that takes several generations to dissipate. 
Second, both the strength and the distribution of noise affects whether
bistability will occur and what the final outcomes will be. Samoilov et
al. \cite{SPA} and Bishop and Qian \cite{BQ} show that auxiliary
chemical reactions can induce a dynamic switching behavior in the
enzymatic PdP cycle, and that final dynamics is determined by the noise
of the additional reactions. In the bistable switch of lactose operon
of \textit{E. coli} Robert et al. \cite{LydR} show that both cellular
growth rate and the molecular concentration levels influence the
ability to switch. Huh and Paulsson \cite{Pa2} showed that the type
of the cellular division mechanism also plays an important role in the
form of the final dynamics. We interpret these observations vis-a-vis
our results in the Discussion section.

Finally, we note that bistablility in a stochastic population system is
not limited to chemical dynamics. In a genetic population, mutation and
selection may lead to alternating fixation in one of two genotypes. In
an ecological population, interactions between species can lead to
dynamics where two competing species are switching for dominance. We
note that our analysis and results apply to any population model
described by a density dependent Markov jump process.

\subsection{Outline of results}

We examine qualitatively different ways in which switching between
stable states is a result of a stochastic effect in a population
modeled by density dependent Markov jump processes. In addition to
noise inherent to the reaction system, we include an intrinsically
noisy splitting/resampling mechanism in the system. In many stochastic
branching models an entity will (upon reproduction, division,
duplication, etc.) produce offspring identical to itself. Here we model
the division as unbiased but variable. When a cell divides its
molecules are randomly allocated to its daughter cells, only on average
replicating the parent's molecular composition. We will show that
introducing such a splitting process at a sufficiently high rate can
produce switching dynamics in which previously unattainable states
become attainable. We will exploit the fact that these two sets of
mechanisms (reactions in the system and changes due to unbiased
resampling/splitting of the system) may operate on different
time-scales.\looseness=1

We consider the following question: which qualitatively different types
of behavior can we observe and under which time scaling regimes? The
short answer is as follows: (1) If the resampling mechanism is
``slower'' than the reaction dynamics, then the system behavior will
entirely depend on the nonlinear dynamics of the reactions: in case the
underlying deterministic system has multiple stable equilibria, the
stochastic process will behave as a Markov chain switching between
these states. (2) If the resampling mechanism is much ``faster'' than
the reaction dynamics, then the system behavior will not depend on the
details of the reaction dynamics, and will behave as a Markov chain
switching between two extremes (zero and capacity) of the system. We
define a single parameter based on the rates of the two mechanisms that
makes the meaning of ``faster'' and ``slower'' in the statements above
mathematically precise.

We show that a fast but unbiased resampling mechanism may be necessary
to produce bistable behavior that the reaction dynamics cannot exhibit.
We further show that the two cases, (1) and (2), produce qualitatively
different dynamical signatures, in terms of switching times and stable
points. Since our analysis only depends on general features
(unbiasedness and time-scale of the rate) of the resampling mechanism,
one can also use a set of auxiliary reactions instead of resampling.
There are other types of noisy mechanisms that one could consider;
however, our goal is to stress that adding noise with even small
changes (relative to the size of the system) can produce bistable
behavior. The additional noise achieves this either by: (1) introducing
small perturbations to a dynamical system that already has the required
properties for bistability or (2) occurring so frequently that the
details of the dynamical system are irrelevant and the system is pushed
to its extreme (zero or capacity) amounts.

\section{Description of the process}

\subsection{Stochastic model for reaction dynamics}\label{subsec:reacmodel}
In the customary notation for interaction of chemical species labeled
$A, B, \ldots,$ 
\begin{equation}
\label{basicreac} \bigl\{a_{i}A+b_{i}B+\cdots \longrightarrow
a_{i}'A+b_{i}'B+\cdots\bigr\}
_{i=1,\ldots, k}
\end{equation}
denotes a system of reactions indexed by $i=1,\ldots, k$ in which
$a_{i},b_{i},\ldots\in\mathbb{Z}^+$ molecules of types $A,B,\ldots$
respectively react and produce $a_{i}', b_{i}',\ldots\in\mathbb{Z}^+$
molecules of these types.
Each reaction $i$ has a reaction rate $\la_i$, a time and
state-dependent rate of occurrences of this reaction. If
$X_A(t),X_B(t),\ldots$ denote the number of molecules of type
$A,B,\ldots
$ respectively at time $t\geq0$, then $X(t)=(X_A(t),X_B(t),\ldots)$
evolves as a Markov Jump Process with jump sizes $\{
(a_{i}'-a_{i},b_{i}'-b_{i},\ldots)\}$ occurring at rates $\la
_i(X_A(t),X_B(t),\ldots)$.

The reaction formalism \eqref{basicreac} can also be used to describe
other systems of interacting entities under a well-mixed spatial
assumption. For example, evolution of an SIS epidemic is expressed as
$I+S \to2I$ (infection), $I \to S$ (recovery); a~two-allele Moran
model with mutation from population genetics can be expressed as $A\to
B, B\to A$ (mutation), $A+B\to2A, A+B\to2B$ (resampling).

For simplicity, we consider the effect of a system of chemical
reactions on essentially a single molecular species $A$. We include the
effect of only one other species $B$ which satisfies a conservation
relation with $A$. This means that every reaction involving $A$ and $B$
is of the form $aA + bB \to(a+\zeta)A + (b-\zeta)B$ for some $\zeta
\in\{-a,\ldots,b\}$, and it ensures that the state space of our system
is one-dimensional determined by $(X_A(t),t\ge0)$. The rationale for
such a conservation law could come from a cellular environment which is
limited (by a factor such as space, or availability of nutrients or
catalysts), or a molecular species whose type can take two different
forms (e.g., a gene that has two allelic types).

We also assume the following properties for the reaction dynamics:
\begin{longlist}[(4)]
\item[(1)] The amount of species $X_A(t)$ is bounded above by the
system capacity $N$ and below by $0$. The rate of any reaction that
decreases the amount of $A$ is zero when $X_A=0$, and the rate of any
reaction that increases the amount of $A$ is zero when $X_A=N$.
\item[(2)] The drift at $0$ and $N$ of the overall reaction dynamics is
directed toward the interior
\[
\frac{d}{ds}\bE\bigl[X_A(s)|X_A(t)=0\bigr]
|_{s=t}> 0,\qquad \frac{d}{ds}\bE \bigl[X_A(s)|X_A(t)=N
\bigr] |_{s=t}< 0.
\]
\item[(3)] The form of reaction rates $\la$ is governed by the law of
(stochastic) mass-action kinetics. A reaction of the form
\[
aA+bB \stackrel{\ka} {\to} a'A+b'B
\]
has rate $\la(X(t)) = \ka(X_A(t))_a (X_B(t))_b = \ka(X_A(t))_a
(N-X_A(t))_b$. Here $(Z)_c$ denotes the falling factorial $(Z)_c =
Z(Z-1)\cdots(Z-c+1)$. When we renormalize $X_A(t)$ by its maximum value
$N$, we will also need the ``scaled falling factorial'' $(z)_{c,N}$
defined by
%
\begin{eqnarray}
\label{sff} (z)_{c,N}:= N^{-c}(Nz)_c = z
\biggl(z-\frac{1}N \biggr) \biggl(z-\frac{2}N \biggr)\cdots
\biggl(z-\frac{c-1}{N} \biggr),
\nonumber
\\[-8pt]
\\[-8pt]
\eqntext{0\leq z \leq1.}
\end{eqnarray}
Note that $\lim_{N \to\infty}(z)_{c,N} = z^c$ for fixed $z$ and $c$.
The constant $\ka>0$ is independent of the state
$(X_A(t),X_B(t)=N-X_A(t))$ but will depend on the scaling parameter~$N$,
$\ka=\ka(N)$. We do not necessarily assume that $\ka(N)$ has the
``standard'' scaling form $\ka(N)=\tka N^{1-(a+b)}$.
\item[(4)] The effect on $A$ from any other species in the system is
subsumed into the values of the rate constants $\ka$, and are assumed
to be state-independent.
\end{longlist}

Assumption~(1) ensures that $X_A(\cdot)\in\{0,\ldots, N\}$ where $N$ serves
as the system-size parameter, while assumption~(2) ensures the reaction
system does not get absorbed at either boundary $\{0,N\}$. Assumption~(3)
is not essential, but with an explicit scaling of the rate $\ka(N)$ in
terms of $N$, the polynomial form of the rates $\la$ will make it easy
to also establish the scaling of the rates $\la(X(t))$ in terms of $N$
under a rescaling of the species amounts $X_A$ [we will occasionally
use the notation $\ka(N)$ for $\ka$ when awareness of dependence on $N$
is key].
Assumption~(4) is made to absorb the effect of the environment and other
species, the changes of which we will not keep track of explicitly.

Under these assumptions, our reaction network system can now be
expressed by
%
\begin{equation}
\label{reac1} \bigl\{aA+bB \mathop{\longrightarrow}^{\ka_\zeta^{ab}} (a+\zeta
)A+(b-\zeta)B\bigr\}_{ a\in\{0,\ldots,N\},b\in\{0,\ldots,N\}, \zeta\in\{
-a,\ldots,b\}}
\end{equation}
with reaction rates of the form $\la_\zeta^{ab}(x) = \ka_\zeta
^{ab}\cdot(x)_a(N-x)_b$.

Since the dynamics of the system depends on its overall drift, it will
be useful to distinguish a subset of reactions whose combined effect on
$\bE[X_A(t)]$ is zero, irrespective of the value of $X_A(t)$. In other
words, we will group reactions into a subset which contributes zero to
the drift (``balanced''), and the rest which are responsible for all of
the drift (``biased''). Note that the definition of balance below is
made for subsets of reactions---one cannot determine for a single
reaction on its own whether it is balanced or not---in order for a
reaction to be balanced it needs to belong to a balanced subset.

Let $\calI$ denote the set of all triples $(a,b,\zeta)$ for which a
reaction as written in \eqref{reac1} is present in the system.
A subset of reactions is defined as ``balanced'' $\calI^{\ba}\subset
\calI$ if for some fixed reactant amounts $a,b$, it satisfies
\[
\sum_{\zeta:(a,b,\zeta) \in\calI^{\ba}} \zeta\la_\zeta
^{ab}(x)=0 \qquad\forall x \quad\Longleftrightarrow \quad \sum
_{\zeta
\dvtx (a,b,\zeta)\in\calI^{\ba}} \zeta\ka_\zeta^{ab} = 0.
\]
A reaction $(a,b,\zeta)\in\calI$ that is part of some balanced subset
is called ``balanced'' and all the remaining reactions that are not
part of any balanced subset are called ``biased,'' $\calI^{\bi}=
\calI
-\calI^{\ba}$. Note that our notion of balance is very restrictive and
is not related to standard notions of chemical reactions.

For any balanced reaction $(a,b,\zeta)\in\calI^{\ba}$, there is
necessarily a reaction $(a,b,\zeta')\in\calI^{\ba}$ with $\zeta
,\zeta'$
having opposite signs (though not necessarily of the same size). Hence,
a reaction $(a,b,\zeta)\in\calI^{\ba}$ cannot have nontrivial rate at
the boundaries of the system: if $\la_\zeta^{a,b}(0)>0$ for some
$\zeta
>0$, then the balance condition would imply the existence of some
$\zeta
'<0$ for which $\la_{\zeta'}^{a,b}(0)>0$, which would violate
assumption~(1) by allowing $X_A$ to drop below $0$ upon a single further
$(a,b,\zeta')$ reaction. Consequently, the boundaries $0$ and $N$ are
absorbing for the balanced subsystem of reactions, and for all
$(a,b,\zeta')\in\calI^{\ba}$ we must have both $a>0$ and $b>0$.
Since assumption~(2) does not allow the boundary $\{0,N\}$ to be
absorbing for the full dynamics, this further implies that there is at
least one biased reaction $(a,b,\zeta)\in\calI^{\bi}$ with $\zeta>0$
and $\zeta\la_\zeta^{ab}(0)>0$, hence $a=0,b>0$; and there is at least
one biased reaction $(a,b,\zeta')\in\calI^{\bi}$ with $\zeta'<0$, and
$\zeta'\la_{\zeta'}^{a,0}(N)< 0$, hence $a>0,b=0$.

The continuous-time Markov jump process model for the reaction dynamics
can be expressed in terms of a set of Poisson processes under a random
time change.
Given a collection $\{Y_\zeta^{ab}\}_{(a,b,\zeta)\in\calI}$ of
independent unit-rate Poisson processes, the state of the system can be
expressed as a solution to the stochastic equation (see~\cite{KuAPP} or
\cite{BKPR} for details)
\begin{eqnarray*}
X_A(t)&=&X_A(0)+\sum
_{(a,b,\zeta)\in\calI} \zeta Y_\zeta^{ab} \biggl(\int
_0^t \la_\zeta^{ab}
\bigl(X_A(s)\bigr)\,ds \biggr)
\nonumber
\\
&=&X_A(0)+\sum_{(a,b,\zeta)\in\calI}\zeta
\hY_\zeta^{ab} \biggl(\int_0^t
\la_\zeta^{ab}\bigl(X_A(s)\bigr)\,ds \biggr)+ \int
_0^tF\bigl(X_A(s)\bigr)\,ds,
\end{eqnarray*}
where $\{\hY_\zeta^{ab}\}_{(a,b,\zeta)\in\calI}$ are centered Poisson
processes $\hY(\la t):=Y(\la t)-\la t$, and
\[
F(x)=\sum_{(a,b,\zeta)\in\calI} \zeta
\la_\zeta^{ab}(x)=\sum_{(a,b,\zeta)\in\calI^{\bi}} \zeta
\kappa_\zeta^{ab}(x)_a(N-x)_b.
\]

Since the capacity $N$ of the system may be arbitrarily large, we will
consider a ``standard'' rescaling of the system; see, for example,
\cite
{EK},\vadjust{\goodbreak} Chapter~11.2. Let $X_N(t)=N^{-1}X_A(t)$, then
%
\begin{eqnarray}
\label{resc1} X_N(t)& =&X_N(0) \nonumber\\
&&{}+ \sum
_{(\zeta,a,b) \in\calI} N^{-1}\zeta \hY _\zeta
^{ab} \biggl(N^{a+b}\ka_\zeta^{ab}\int
_0^t \bigl(X_N(s)
\bigr)_{a,N} \bigl(1-X_N(s)\bigr)_{b,N} \,ds \biggr)
\\
&&{}+ \int_0^tF_N\bigl(X_N(s)
\bigr) \,ds,\nonumber
\end{eqnarray}
where the local drift of the renormalized system is given by
\[
F_N(x)=\sum_{(\zeta,a,b) \in\calI^{\bi}}
N^{a+b-1}\zeta \ka _\zeta ^{ab} (x)_{a,N}(1-x)_{b,N}.
\]
The most important feature of the Markov jump process model is the
relationship of the variance to the drift. Note that we can write (\ref
{resc1}) as
\[
X_N(t)=X_N(0)+M_N(t)+\int
_0^t F_N\bigl(X_N(s)
\bigr)\,ds,
\]
where the second term from (\ref{resc1}), a weighted sum of
time-changed centered Poisson processes, is a martingale $M_N(t)$ whose
quadratic variation satisfies
\[
[M_N]_t=\sum_{(\zeta,a,b) \in\calI}
N^{-2}\zeta^2 Y_\zeta ^{ab}
\biggl(N^{a+b}\ka_\zeta^{ab} \int
_0^t \bigl(X_N(s)
\bigr)_{a,N}\bigl(1-X_N(s)\bigr)_{b,N} \,ds \biggr).
\]
Hence, if $\mathcal{F}_t=\sigma(X(s),0\le s\le t)$ denotes the natural
filtration of the process, then
\begin{eqnarray*}
&&\frac{d}{ds}\bE \bigl[X_N(s)|\mathcal{F}_t \bigr]
|_{s=t}\\
&&\qquad=\bE \bigl[F_N\bigl(X_N(t)\bigr)|
\mathcal{F}_t \bigr]
\\
&&\qquad=\sum_{(\zeta,a,b) \in\calI^{\bi}}
N^{a+b-1}\zeta \ka_\zeta^{ab} \bigl(X_N(t)
\bigr)_{a,N}\bigl(1-X_N(t)\bigr)_{b,N},
\\
&&\frac{d}{ds}\bE \bigl[\bigl(X_N(s)-\bE\bigl[X_N(s)
\bigr]\bigr)^2|\mathcal{F}_t \bigr] |_{s=t}\\
&&\qquad=
\frac{d}{ds}\bE \bigl[[M_N]_s|\mathcal{F}_t
\bigr] |_{s=t}\\
 &&\qquad= \sum_{(\zeta,a,b) \in\calI^{\ba}\cup\calI^{\bi}}
N^{a+b-2}\zeta^2 \ka_\zeta^{ab}
\bigl(X_N(t)\bigr)_{a,N}\bigl(1-X_N(t)
\bigr)_{b,N}.
\end{eqnarray*}
Recall that the reaction rates $\ka^{ab}_\zeta=\ka^{ab}_\zeta(N)$ also
depend on the scaling parameter~$N$. The standard scaling for a
reaction constant is $\ka^{ab}_\zeta(N)=\tka^{ab}_\zeta N^{1-(a+b)}$
for some $N$-independent constant $\tka^{ab}_\zeta$. However,
regardless of the chosen scaling of $\ka^{ab}_\zeta$, for biased
reactions $\calI^{\bi}$ the order of magnitude for each summand in the
infinitesimal variance $\frac{d}{ds}\bE [[M_N]_s|\mathcal
{F}_t
] |_{s=t}$ is $N^{-1}$ times smaller than the corresponding summand
in the infinitesimal drift $\bE [F_N(X_N(s))|\mathcal{F}_t
]
|_{s=t}$. This constrains the possible limiting dynamics of $X_N$.
Suppose the scaling of the rates is $\ka^{ab}_\zeta=N^{1-(a+b)} \tka
^{ab}_\zeta$, and note that then $F_N(x) \to\sum_{(\zeta,a,b) \in
\calI
^{\bi}} \zeta \tka_\zeta^{ab}x^a(1-x)^b$ uniformly for $x \in[0,1]$.
As established in \cite{Ku77}, in the limit as $N\to\infty$ the drift
overpowers the noise and, provided $X_N(0)\Rightarrow x(0)$, the
renormalized process $(X_N(t),t\ge0)$ converges in distribution (in
the Skorokhod topology of cadlag paths) to a solution $(x(t),t\ge0)$
of the ordinary differential equation
%
\begin{equation}
\label{ode} x(t)=x(0)+\int_0^t \sum
_{(\zeta,a,b) \in\calI^{\bi}} \zeta \tka _\zeta ^{ab}x(s)^a
\bigl(1-x(s)\bigr)^b \,ds.
\end{equation}
In fact, if the scaling of the reaction constants $\ka^{ab}_\zeta$ is
not standard, but is consistent for both balanced and biased reactions
in terms of the polynomial order of the rate function $\la^{ab}_\zeta$,
then the same deterministic limit is obtained under an appropriate time
rescaling.

The only way to get a stochastic limiting object for $X_N$ is for at
least one subset of balanced reactions to have a rate constant with a
different scaling in $N$. This different scaling needs to be such that
the noise term due to this subset of reactions will be of the same
order of magnitude as the overall drift from the biased reactions. This
would require a specific separation of time-scales for balanced versus
biased reactions. Although we do not exclude this possibility from our
analysis (see definition of $\ep_A$ at the end of this section), our
emphasis in this paper is on separating the time-scales in terms of
contribution of an additional source of noise, and its ability to
produce nontrivial random limiting objects for $X_N$.

\subsection{Stochastic model for resampling, branching or
splitting}\label{subsec:splitmodel}

We now introduce the additional mechanism in the system that describes
changes to species amounts due to the effect of splitting, branching or
resampling, which also effects the species count. For intracellular
molecular populations, our first model of splitting was motivated by a
simple double-then-divide principle: the cell will first double in size
by replicating its constituent molecular species, and then allocate
approximately one half of this doubled material into each daughter
cell---the allocation mechanism is not perfect and will make random
error from the original (undoubled) amount. For genetic populations,
common models for resampling follow the Wright--Fisher or the Moran
neutral reproduction law: each individual of the offspring population
chooses at random from the diploid version of the current population's
genes what to inherit---the resampling mechanism is such that an allele
of one type in one generation may at random be replaced in the
subsequent generation by an allele of the other type. These two are
both examples of a general mechanism with
the following key properties that we assume for splitting/resampling:
\begin{longlist}[(5)]
\item[(5)] The splitting/resampling occurs at rate $\gamma(x,N)$ that
depends on: the current state $X_A=x$ of the system and the scaling
parameter $N$; conditional on $X_A=x$ it is independent of reactions.
\item[(6)] The change in the species amount $X_A$ due to a
splitting/resampling event has the distribution $p_{x,y}=\bP
[X_A(t)=y|X_A(t-)=x]$ that have absorbing boundaries $p_{0,0}=1,
p_{N,N}=1$, and that are unbiased
\[
\mu_N(x)=\sum_yyp_{x,y}=x\qquad
\forall x \in\{0,1,\ldots,N\}.
\]
\end{longlist}
We also assume, for some of our results, that the rate $\gamma(x,N)$
and distribution $\{p_{x,y}\}$ are such that:
\begin{enumerate}[(${7}^*$)]
\item[(${7}^*$)] The change sizes are asymptotically uniformly bounded,\vspace*{3pt}
\begin{enumerate}[($7^*.\mathrm{a}$)]
\item[($7^*.\mathrm{a}$)]\quad
$\displaystyle\forall\Delta>0\qquad \sup_x
\gamma(x,N) \sum_{y:N^{-1}|y-x|\ge\Delta}p_{x,y}\to0 \qquad\mbox{as }
N\to\infty,$\vspace*{3pt}
\end{enumerate}
and the change size variance $ \sigma^2_N(x)=\sum_{y}(y-x)^2p_{x,y} $
is asymptotically given by
\begin{enumerate}[($7^*.\mathrm{b}$)]\vspace*{3pt}
\item[($7^*.\mathrm{b}$)]\quad
$\displaystyle\sup_x \bigl|
\gamma(x,N)N^{-2}\sigma ^2_N(x) -
\tgamma^2\tsig^2\bigl(N^{-1}x\bigr)\bigr |\to0\qquad
\mbox{as } N\to \infty$\vspace*{3pt}
\end{enumerate}
%
for some constant $\tgamma>0$ and function $\tsig(\cdot)$ that are
independent of $N$, and such that $x\mapsto\tsig^2(x)$ is continuous
with $\tsig^2(x)>0, \forall x\in(0,1)$ and $\tsig^2(0)=\tsig^2(1)=0$.
\end{enumerate}
%
Unbiasedness in assumption~(6) could be replaced by an ``asymptotic
unbiasedness'' assumption $N^{-1} |\mu_N(x)-x |\to0 \mbox{
as }
N\to\infty$, but for the sake of simplicity we assume $\mu_N(x)=x$.
Absorption in assumption~(6) implies splitting is noiseless on the
boundaries regardless of its time-scale. When the additional
assumption~(${7}^*$) holds (as we will assume for our results in
Section~\ref{subsec:qualbeh}), the splitting mechanism contributes
diffusively to the limit of the renormalized species count~$X_N$.
However, we will also examine the case when the rate of the splitting
mechanism is on a slower time-scale (in Section~\ref{subsec:bistable}),
as well as the case when it is on a faster time-scale (in Section~\ref{subsec:switch}).
The condition that $\tsig^2$ has boundary values
$\tsig^2(0)=\tsig^2(1)=0$ is natural given that any splitting or
resampling mechanism should absorb at the boundaries as indicated by
$p_{0,0}=p_{N,N}=1$.

\begin{example*}[(HG)]
One example of a splitting mechanism would be to
completely randomly reallocate the doubled content of a parent cell
into daughter cells. If the initial content\vadjust{\goodbreak} is $(X_A, X_B)=(x,N-x)$,
and the doubled content $(2x, 2(N-x))$ is partitioned in a single swoop
(draw without replacement) into two sets of $N$ molecules (one for each
daughter cell), then the content in each daughter cell has the
hypergeometric distribution (below we keep track of an arbitrarily
chosen single lineage)
\begin{eqnarray}
p_{x,y}=\bP\bigl[X_A(t)=y|X_A(t-)=x\bigr] =
\frac{{2x\choose y}
{2N-2x \choose N-y}}{{2N\choose N}},\nonumber\\
  \eqntext{0 \lor(2x-N) \leq y \leq2x\land N.}
\end{eqnarray}
The change in the species count is clearly unbiased $\mu_N(x)=\sum_{y=0
\lor N-2x}^{2x\land N} yp_{x,y}=x$, with variance
\[
\sigma^2_N(x)=\sum_{y=0\lor N-2x}^{2x\land N}(y-x)^2
p_{x,y}=\frac{N
2x(2N-2x)}{4N^2} \biggl(1-\frac{N-1}{2N-1} \biggr)=
\frac{x(N-x)}{2N-1}.
\]
Then assumption (${7}^*$) will hold if $\gamma(x,N)=\tgamma^2 N$ and
$\tsig^2(x)=\frac{1}2x(1-x)$, since for (${7}^*$.a) we have
\begin{eqnarray*}
\sup_x \bigl|\gamma(x,N)N^{-2}\sigma^2_N(x)-
\tgamma^2 \tsig ^2\bigl(N^{-1}x\bigr) \bigr|&=&
\tgamma^2\sup_x \biggl|\frac{x(N-x)}{N(2N-1)}-
\frac{1}2\frac{x}{N} \biggl(1-\frac{x}{N} \biggr) \biggr|\\
&\to&0
\end{eqnarray*}
and using tail bounds for the hypergeometric distribution \cite{Ch}
$\sum_{y=x+N\Delta}^N p_{x,y}\le e^{-2\Delta^2N}$ independently of $x$,
and for (${7}^*$.b) we have
\[
\gamma(x,N) \sup_x\sum_{y:|y-x|\ge N\Delta}p_{x,y}
\le2 \tgamma ^2 Ne^{-2\Delta^2N}\to0.
\]
\end{example*}
\begin{example*}[(Bin)]
Another example would be to sample with
replacement from the population in which each offspring picks its type
randomly from any individual in the parent generation. If the initial
count is $X_A=x$, then the count in the next generation has the
binomial distribution
\[
p_{x,y}=\bP\bigl[X_A(t)=y|X_A(t-)=x\bigr] =
{\pmatrix{N\cr y}} \biggl(\frac
{x}{N} \biggr)^y \biggl(1-
\frac{x}{N} \biggr)^{N-y} , \qquad 0\leq y \leq N.
\]
This form of resampling is used in (the haploid version of) the
Wright--Fisher model for genetic drift (e.g., \cite{DurDNA} Section~1.2). It is also used as the prototype of a splitting mechanism of
simple ``independent segregation'' of division of cells \cite{Pa2}.
This distribution is again unbiased, and assumption (${7}^*$) will
hold if $\gamma(x,N)=\frac{1}2\tgamma^2 N$ for some constant $\tgamma
^2>0$. Using similar arguments as above, it is then easy to show that
both (${7}^*$.a) and (${7}^*$.b) will hold with $\tsig
^2(x)=\frac{1}{2}x(1-x)$.
\end{example*}

\begin{example*}[(Bern)]
Finally, the simplest example of a
splitting/resampling mechanism is to have a\vadjust{\goodbreak} single amount error in the
daughter cell (or the next generation), and to have the rate at which
the error occurs be proportional to both the current amount $X_A=x$ and
the amount of $X_B=N-x$. Errors from imperfect division will result in
$\pm$ change with equal probability
\[
p_{x,x-1}=p_{x,x+1}=1/2.
\]
This distribution is clearly unbiased, and assumption (${7}^*$) will
hold if the rate of error occurrences is $\gamma(x,N)=\frac{1}2\tgamma^2
N^2\frac{x}N(1-\frac{x}N)$ for some $\tgamma^2>0$, with the limiting
variance $\tsig^2(x)=\frac{1}2x(1-x)$. \\
This form of resampling is used in the Moran model for genetic drift
(e.g., \cite{DurDNA} Section~1.5).
It is also used in \cite{Pa2} as an example of an ``ordered
segregation'' splitting mechanism for cell division (self volume
exclusion partitioning error, \cite{Pa2} Supporting Information). In
a cellular system it could also be described as a set of balanced
reactions $A+B\to2A, A+B\to2B$ with mass-action dynamics and
appropriately scaled rate constants.
\end{example*}

We note that, from the perspective of limiting results, the differences
in the specific details of the mechanism will not be important. The
only feature of relevance will be the order of magnitude of the
prelimiting rate $\gamma(x,N)$ and the form of the limiting variance
$\tsig^2(x)$. There are many other types of splitting, branching or
resampling mechanisms, yielding a different form for the limiting
variance. They are easy to construct in case of small changes that
result in single count errors, via a range of birth--death probability
distributions. We shall see, in both Section~\ref{sec:bistable} and
Section~\ref{sec:switching}, how the actual form for the variance
$\tsig
^2(x)$ affects the qualitative behavior of the limit of the
renormalized process.

The changes due to this additional mechanism can also be expressed in
terms of a Poisson processes under a random time change. Let $Y_\gamma$
be a counting process with state-dependent rate $\gamma(x,N)$, and $\{
Z(x,s)\}_{0\le x\le N}$ be independent random variables with
probability distribution $p_{x,\cdot}$ for any $s\ge0$. A change due
to splitting or resampling can be represented as a stochastic integral
$\int_0^t (Z(X(s-),s)-X(s-))\,dY_\gamma(s)$. The evolution in species
count due to both reaction dynamics and splitting is
\begin{eqnarray*}
X_A(t)&=&X_A(0)+\sum
_{(\zeta,a,b)\in I}\zeta\hY_\zeta^{ab} \biggl(\int
_0^t \la_\zeta^{ab}
\bigl(X_A(s)\bigr)\,ds \biggr)+ \int_0^t
F\bigl(X_A(s)\bigr)\,ds
\\
&&{} +\int_0^t \bigl(Z\bigl(X_A(s-),s
\bigr)-X_A(s-)\bigr)\,dY_\gamma(s);
\end{eqnarray*}
hence for the rescaled system $X_N=N^{-1}X_A$ we have
%
\begin{eqnarray}
\label{resc2} X_N(t) &=& X_N(0) \nonumber\\
&&{}+ N^{-1} \sum
_{(\zeta,a,b) \in\calI} \zeta \hY _\zeta^{ab}
\biggl(N^{a+b}\ka_\zeta^{ab}\int_0^t
\bigl(X_N(s)\bigr)_{a,N} \bigl(1-X_N(s)
\bigr)_{b,N} \,ds \biggr)\nonumber\\
&&{}+ \int_0^tF_N
\bigl(X(s)\bigr)\,ds
\\
&&{} +\int_0^t \bigl(N^{-1}Z
\bigl(NX_N(s-),s\bigr)-X_N(s-) \bigr)\,d\hY _\gamma
(s)
\nonumber
\\
&&{} +\int_0^t \bigl(N^{-1}Z
\bigl(NX_N(s-),s\bigr)-X_N(s-) \bigr)\gamma
\bigl(NX_N(s),N\bigr) \,ds\nonumber
\end{eqnarray}\vspace*{-12pt}
\def\@eqnnum{\hb@xt@.01\p@{}\rlap{\normalfont\normalcolor
              \hskip -\displayindent\hskip -\displaywidth\theequation}}
\def\theequation{6$'$}
\begin{eqnarray}
\hspace*{17pt}&=& X_N(0) + M_{N,\gamma}(t)+ \int_0^tF_N
\bigl(X(s)\bigr)\,ds
\nonumber
\\[-8pt]
\\[-8pt]
\nonumber
& &{}+\int_0^tN^{-1} \bigl(Z
\bigl(NX_N(s-),s\bigr)-NX_N(s-) \bigr)\gamma
\bigl(NX_N(s),N\bigr) \,ds.
\nonumber
\end{eqnarray}
We still have
%
\renewcommand{\theequation}{\arabic{equation}}
\setcounter{equation}{6}
\begin{equation}
\label{FN} F_N(x)=\sum_{(\zeta,a,b) \in\calI^{\bi}}
N^{a+b-1}\zeta \ka _\zeta ^{ab}(N)
(x)_{a,N}(1-x)_{b,N},
\end{equation}
but now $M_{N,\gamma}$ denotes the martingale formed by the second and
fourth summand in (\ref{resc2})
whose quadratic variation is
%
\begin{eqnarray}
\label{qvar2} &&[M_{N,\gamma}]_t\nonumber\\
&&\qquad=\sum
_{(\zeta,a,b) \in\calI} N^{-2}\zeta^2 Y_\zeta
^{ab} \biggl(N^{a+b}\ka_\zeta^{ab}(N) \int
_0^t \bigl(X_N(s)
\bigr)_{a,N}\bigl(1-X_N(s)\bigr)_{b,N} \,ds \biggr)
\\
&&\qquad\quad{} +\int_0^t N^{-2} \bigl(Z
\bigl(NX_N(s-),s\bigr)-NX_N(s-) \bigr)^2\,dY_\gamma(s).\nonumber
\end{eqnarray}
Note that since the two mechanisms are driven by independent Poisson
processes, there is no quadratic covariation contribution. Since $\bE
_{p_{x,\cdot}} [Z(x,s)-x ]=0$ for all $s\ge0$ and $x\in\{
0,\ldots
,N\}$, the infinitesimal mean still satisfies
%
\begin{eqnarray}
\label{infmean} &&\frac{d}{ds}\bE \bigl[X_N(s)|
\mathcal{F}_t \bigr] \Big|_{s=t}\nonumber\\
&&\qquad=\bE \bigl[F_N
\bigl(X_N(t)\bigr)|\mathcal{F}_t \bigr]
\\
&&\qquad=\sum
_{(\zeta,a,b) \in\calI^{\bi}} N^{a+b-1}\zeta \ka_\zeta^{ab}(N)
 \bigl(X_N(t)\bigr)_{a,N}\bigl(1-X_N(t)
\bigr)_{b,N};\nonumber
\end{eqnarray}
on the other hand, the infinitesimal variance now satisfies
%
\begin{eqnarray}
\label{infvariance} &&\frac{d}{ds}\bE \bigl[\bigl(X_N(s)-\bE
\bigl[X_N(s)\bigr]\bigr)^2|\mathcal{F}_t
\bigr] \Big|_{s=t}\nonumber\\
&&\qquad=\frac{d}{ds}\bE \bigl[[M_N]_s|
\mathcal{F}_t \bigr] \Big|_{s=t}
\nonumber
\\[-8pt]
\\[-8pt]
\nonumber
&&\qquad= \sum_{(\zeta,a,b) \in\calI^{\ba}\cup\calI
^{\bi}} N^{a+b-2}\zeta^2
\ka_\zeta^{ab}(N) \bigl(X_N(s)
\bigr)_{a,N}\bigl(1-X_N(s)\bigr)_{b,N} \\
&&\hspace*{96pt}{}+\gamma
\bigl(NX_N(s),N\bigr)N^{-2} \sigma_N^2
\bigl(NX_N(s)\bigr).\nonumber
\end{eqnarray}
%

\subsection{Possible qualitative behaviors}\label{subsec:qualbeh}

In order to determine the role that the rate of the
splitting/resampling mechanism may play, we first establish the
possible behavior of the system when $N$ is large. The decisive
quantity for the qualitative behavior of the system is
\begin{equation}
\label{epsA} \ve_A:= \lim_{N\to\infty}
\ve_A(N), \qquad\ve_A(N):= \frac{c_{\sigma
^2}(N)}{c_{\mu}(N)},
\end{equation}
where
%
\begin{equation}
\label{ordervar} c_{\sigma^2}(N):=\sum_{(a,b,\zeta)\in\calI^{\ba}}N^{a+b-2}
\ka _\zeta ^{ab}(N) +\sup_{x\in[0,1]}
\gamma(Nx,N)N^{-2}\sigma^2_N(Nx)
\end{equation}
and
%
\begin{equation}
\label{ordermean} c_{\mu}(N):= \sum_{(a,b,\zeta)\in\calI^{\bi}}N^{a+b-1}
\ka_\zeta^{ab}(N);
\end{equation}
$\ep_A$ relates the magnitude of the variance due to the splitting
mechanism (or possibly a faster set of balanced reactions) to the
magnitude of the drift due to reaction dynamics. If they are of the
same order of magnitude, then the rescaled process will converge to a diffusion.
In other words, if $\ve_A\in(0,\infty)$, then we can assume (by
rescaling time as necessary) that both scaling constants (\ref
{ordervar}) and (\ref{ordermean}) satisfy $\tilde{c}_{\sigma^2}=\lim_{N\to\infty} c_{\sigma^2}(N)\in(0,\infty), \tilde{c}_\mu=\lim_{N\to
\infty} c_{\mu}(N)\in(0,\infty)$ and $\tilde{c}_{\sigma^2}=\ve
_A\tilde
{c}_{\mu}$. If assumption (${7}^*$) is satisfied, the noise of the
splitting mechanism is such that the limiting behavior of the system is
diffusive, instead of being deterministic, as in~(\ref{ode}) when only
reactions are present.

\begin{proposition}\label{sdethm} If $\ve_A\in(0,\infty)$,
assumption (${7}^*$) holds for $X_A=NX_N$, and $X_N(0)\Rightarrow\tX
(0)\in[0,1]$, then $X_N\Rightarrow\tX$ as $N\to\infty$ in distribution
on the Skorokhod space of cadlag paths on $[0,1]$, where $\tX$ is a
diffusion with drift and diffusion coefficients given by
%
\begin{eqnarray}
\label{diffcoeff} \tphi(x)&=& \sum_{(a,b,\zeta)\in\calI^{\bi}} \zeta \tka
^{ab}_{\zeta, \mu} x^a(1-x)^b,
\nonumber
\\[-8pt]
\\[-8pt]
\nonumber
\tila(x)&=&
\sum_{(a,b,\zeta
)\in\calI^{\ba}} \zeta^2 \tka^{ab}_{\zeta, \sigma^2}
x^a(1-x)^b+\tgamma^2\tsig^2(x),
\end{eqnarray}
where for each $(a,b,\zeta)\in\calI^{\bi}$
\[
\tka^{ab}_{\zeta,\mu} = \lim_{N\to\infty}
N^{a+b-1} \ka_\zeta^{ab}(N)
\]
for each $(a,b,\zeta)\in\calI^{\ba}$
\[
\tka^{ab}_{\zeta,\sigma^2}=\lim_{N\to\infty}
N^{a+b-2} \ka_\zeta^{ab}(N)
\]
and for some $\tgamma^2>0$
\[
\tgamma^2\tsig^2(x)=\lim_{N\to\infty}
\gamma(Nx,N)N^{-2}\sigma^2_N(Nx).
\]
If all reaction rates have standard scaling $\ka_\zeta^{ab}=\tka
_\zeta
^{ab}N^{1-(a+b)}$, then $\tka^{ab}_{\zeta,\sigma^2}=0$ and $\tila
(x)=\tgamma^2\tsig^2(x)$.
\end{proposition}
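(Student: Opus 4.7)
My approach is the standard semimartingale convergence/martingale-problem method of Ethier and Kurtz \cite{EK} applied to the decomposition (6'). The idea is: (i) show that the drift $F_N$ in \eqref{FN} converges uniformly on $[0,1]$ to $\tphi$; (ii) show that the quadratic variation (\ref{qvar2}) is uniformly close to $\int_0^{\cdot}\tila(X_N(s))\,ds$; (iii) verify that the jumps of $X_N$ vanish in probability uniformly on compact time intervals; and (iv) pass to the limit to identify $\tX$ as the unique weak solution of the SDE with coefficients $(\tphi,\tila)$.

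\emph{Step 1 (drift).} Since $\calI^{\bi}$ is finite and $(x)_{a,N}(1-x)_{b,N}\to x^a(1-x)^b$ uniformly on $[0,1]$ by \eqref{sff}, and since $\ve_A\in(0,\infty)$ together with the time-rescaling convention forces each biased summand to satisfy $N^{a+b-1}\ka_\zeta^{ab}(N)\to\tka_{\zeta,\mu}^{ab}$, the uniform convergence $F_N\to\tphi$ is immediate termwise from \eqref{FN}.

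\emph{Step 2 (quadratic variation).} In (\ref{qvar2}) the reaction summands split into (a) biased ones, each carrying a prefactor $N^{a+b-2}\ka_\zeta^{ab}=O(N^{-1})$ and hence vanishing in sup-norm; and (b) balanced ones, where replacing the time-changed Poisson $N^{-2}\zeta^2 Y_\zeta^{ab}(\cdot)$ by its compensator using the functional LLN for Poisson and Doob's $L^2$ inequality produces $\int_0^t \zeta^2\tka_{\zeta,\sigma^2}^{ab}X_N(s)^a(1-X_N(s))^b\,ds$, matching the polynomial part of $\tila$. For the splitting contribution $\int_0^t N^{-2}(Z-NX_N)^2\,dY_\gamma(s)$, I would replace $dY_\gamma$ by its predictable compensator $\gamma\,ds$ and invoke assumption $(7^*\mathrm{b})$ to identify the limiting integrand as $\tgamma^2\tsig^2(X_N(s))$.

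\emph{Step 3 (small-jump control and tightness).} Reaction jumps have amplitude $|\zeta|/N=O(1/N)$. Splitting jumps of size at least $\Delta$ occur at total rate $\gamma(NX_N,N)\sum_{y:|y-NX_N|\ge N\Delta}p_{NX_N,y}$, which tends to $0$ uniformly in the state by $(7^*\mathrm{a})$. These estimates together with a uniform bound on $[M_{N,\gamma}]_t$ from Step 2 yield Aldous-type tightness of $\{X_N\}$ in the Skorokhod space $D_{[0,1]}[0,\infty)$, with any subsequential limit having continuous paths. Combining Steps 1--3 then shows that for any $f\in C^2[0,1]$ the process $f(X_N(t))-f(X_N(0))-\int_0^t[\tphi(X_N(s))f'(X_N(s))+\tfrac12\tila(X_N(s))f''(X_N(s))]\,ds$ is, up to a vanishing error, a martingale. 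Standard well-posedness of the corresponding martingale problem on $[0,1]$ (with $\tphi,\tila$ continuous and $\tila$ vanishing at the endpoints inherited from $\tsig^2(0)=\tsig^2(1)=0$ and the form of balanced rates) identifies $\tX$ uniquely, giving $X_N\Rightarrow\tX$. The final assertion of the proposition is immediate from $N^{a+b-2}\ka_\zeta^{ab}=\tka_\zeta^{ab}N^{-1}\to 0$ under the standard scaling.

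The principal obstacle I anticipate is the compensator replacement in Step 2 for the splitting term: $(7^*)$ only controls the mean rate of squared displacement and the tail of single jumps, not higher moments of $Z-NX_N$. To show the stochastic integral against $dY_\gamma$ differs from its $\gamma\,ds$ compensator by a vanishing $L^2$ fluctuation, I would split the integrand at a threshold $\Delta$: the small-jump part contributes fourth-moment bound $\le \Delta^2\int N^{-2}\sigma_N^2\gamma\,ds = O(\Delta^2)$, while the large-jump part has total intensity $o(1)$ by $(7^*\mathrm{a})$ and pointwise bound $|Z-NX_N|\le N$ giving contribution $o(1)$; sending $N\to\infty$ and then $\Delta\to 0$ closes the estimate. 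Handling this carefully, and ensuring the continuous boundary behavior of $\tila$ suffices for well-posedness of the limit martingale problem, are the only non-routine technical points.
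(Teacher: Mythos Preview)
Your proposal is correct and takes essentially the same approach as the paper: both verify the standard conditions for diffusion approximation of Markov jump processes---uniform convergence of the infinitesimal drift to $\tphi$, of the infinitesimal variance to $\tila$, and vanishing of jumps via Assumption~${7.\!\!}^*$---and then invoke well-posedness of the limiting martingale problem. The paper's proof is a brief sketch citing \cite{DurSC} Sec.~8.7 and Kurtz's theorems rather than \cite{EK}, and it does not dwell on the compensator-replacement issue you flag (your truncation argument is fine, but in the infinitesimal-characteristics formulation of \cite{DurSC} that step is absorbed into the standard hypotheses and need not be isolated).
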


\begin{pf} This is a direct consequence of standard theorems for
convergence of Markov processes to a diffusion (see, e.g., \cite{DurSC}
Section~8.7) based on locally uniform convergence of the infinitesimal
mean and variance to the limiting drift and diffusion coefficients,
respectively, and convergence of jumps so that they disappear in the
limit. Recall that the infinitesimal mean of the rescaled process $X_N$
from (\ref{resc2}$'$) is given by (\ref{infmean}) and its infinitesimal
variance by (\ref{infvariance}). Since the process takes values in
$[0,1]$, we can check convergence uniformly on the whole space, and
moreover $M_{N,\gamma}(t)=X_N(t)-\bE[X_N(t)]$, whose quadratic
variation is given in (\ref{qvar2}), is then a square integrable
martingale. For the contributions by the splitting mechanism, the
convergence of the infinitesimal mean and variance, as well as the
control of the jumps, are easy to check from the three requirements on
the splitting mechanism made in assumptions (6) and (${7}^*$). For the
contributions by the reaction dynamics the convergence of the
infinitesimal mean and variance, and the control over jumps, follow
from the scaling properties of the counting processes used in their
representation and from the fact that the rates for these counting
processes are Lipschitz and bounded. These same conditions have been
checked, in the case when reaction rates have a more general form, for
law of large numbers and central limit theorem results for rescaled
population-dependent Markov processes \cite{Ku77}. Alternatively, one
could also check that the Markov process $X_N$ satisfies all the
conditions required for convergence of more general Markov jump
processes to a diffusion as stated in Theorem~2.11 of \cite{Ku70} and
Theorem~3.1 of \cite{Ku71}.
The only thing left to check is whether a diffusion with coefficients
as given exists and is unique in law. This follows easily from the fact
that the contributions to $\tila(x)$ and $\tphi(x)$ from reaction rates
are polynomial, and we have assumed that $\tsig^2(x)$ is Lipschitz.
\end{pf}

A diffusion may or may not hit its boundary points, but it never spends
a disproportionate amount of time at any point in its range, including
the boundaries, unless they are absorbing. Hence, we really need to
consider the behavior of the process when either $\ve_A\to0$ or $\ve
_A\to\infty$ (as a function of an additional asymptotic parameter which
will be discussed below in Section~\ref{subsec:bistable}). The only
remark we make when $\ve_A$ remains bounded away from $0$ and $\infty$
is that the behavior\vadjust{\goodbreak} of $\tX$ at the boundary $\{0,1\}$ depends on the
form for the limiting variance of the splitting mechanism. As a
consequence of assumption~(2), and of the properties of the splitting
variance at $\{0,1\}$, we are only guaranteed that $\tphi(0)>0, \tphi
(1)<0$ and $\tila(0)=\tila(1)=0$. Hence, $\{0,1\}$ are neither
absorbing nor natural, but it remains to determine whether they are
entrance or regular boundary points. Further conditions on the reaction
and splitting mechanisms for reaching the boundary (i.e., for $\{0,1\}$
to be regular boundary points) are guaranteed by interpreting Feller's
test for explosion; see, for example, \cite{DurSC}, Section~6.2. or
\cite{KT}, Section~15.6.

The diffusive case $\ve_A\in(0,\infty)$ separates two other types of
behavior. When $\ve_A\approx0$ and $\ve_A\approx\infty$, the rate of
splitting is either slower or faster, respectively, than prescribed by
assumption (${7}^*$). Both cases lead to behavior which exhibits a
type of stochastic bistability, in which the system spends almost all
of its time at two points, or very near them. This bistability is, in
the two cases $\ve_A\approx0$ and $\ve_A\approx\infty$, caused by
completely different effects of the two stochastic mechanisms in our
model, which we investigate separately in the next two
sections.\vspace*{-2pt}

\section{Bistable behavior from slow splitting}\label{sec:bistable}

Let us consider the case $\ve_A\approx0$, and assume that time has
been rescaled so that $\tilde{c}_\mu=\lim_{N\to\infty}c_\mu(N)
\in
(0,\infty)$ and $\tilde{c}_{\sigma^2}=\lim_{N\to\infty}c_{\sigma
^2}(N)\approx0$.
In modeling this is a relatively conventional scaling, in which a small
amount of noise (from balanced reactions and splitting) will affect the
predominantly deterministic behavior due to drift (of biased reactions).
A~precise statement of this depends on how fast $\ve_A(N)=\frac
{c_{\sigma^2}(N)}{c_{\mu}(N)}$ approaches $0$ as a function of $N$, and
we examine it more carefully by first introducing a separate
perturbation parameter $\ve$ and then relating it to the scaling
parameter $N$.\vspace*{-2pt}

\subsection{Small diffusive noise effects}\label{subsec:bistable}

The simplest way to model small diffusive effects is with an enforced
separation of time-scales between reactions and splitting using a
perturbation parameter. Suppose all the reaction constants $\ka
^{ab}_\zeta(N)$ depend only on the scale of the system $N$ and have the
standard scaling $\ka^{ab}_\zeta=\tka^{ab}_\zeta N^{1-(a+b)}$ for some
constants $\tka^{ab}_\zeta$. Suppose the splitting rate, in addition to
$N$, also depends on a small parameter $\ve>0$, so that the splitting
rate is $\gamma(x,N,\ve) =\ve^2\gamma(x,N)$ where $\gamma(x,N)$
satisfies assumption (${7}^*$). The fact that the splitting rate is
slower than diffusive is expressed in terms of the fact that we will
consider the behavior of the system as $\ve\to0$. In this case the
quantitiy $\ve_A$ defined in (\ref{epsA}) is just a constant multiple
of $\ve^2$
\begin{eqnarray*}
\ep_A &:=& \lim_{N\to\infty}
\frac{\sum_{(a,b,\zeta)\in I^{\ba
}}N^{a+b-2}\ka_\zeta^{ab}(N) +\ve^2\sup_{x\in[0,1]}N^{-2}\gamma
(Nx,N)\sigma^2_N(Nx)}{\sum_{(a,b,\zeta)\in I^{\bi}}N^{a+b-1}\ka
_\zeta
^{ab}(N)}\\
&=&\frac{\ve^2 \tilde{c}_{\sigma^2}}{\tilde{c}_\mu},
\end{eqnarray*}
where $\tilde{c}_{\sigma^2}=\tgamma^2\sup_{x\in[0,1]} \tsig^2(x)$ and
$\tilde{c}_\mu=\sum_{(a,b,\zeta)\in
I^{\bi}}\tka^{ab}_{\zeta,\mu}$.\vadjust{\goodbreak}

We could also assume the rates of balanced reactions depend on the
additional parameter $\ve^2$, in the sense that $\ka^{ab}_\zeta=\ve
^2\tka^{ab}_\zeta N^{2-(a+b)}$ for $(a,b,\zeta)\in\calI^{\ba}$. In
this case
\[
\tilde{c}_{\sigma^2}=\sum_{(a,b,\zeta)\in I^{\ba}}\tka
^{ab}_{\zeta,\mu
}+\tgamma^2\sup_{x\in[0,1]}
\tsig^2(x).
\]
However, if we make no special separation in the way balanced and
biased reactions are scaled, then the assumption of standard scaling
$\ka^{ab}_\zeta=\tka^{ab}_\zeta N^{1-(a+b)}$ implies that this is only
possible if the parameter $\ve$ satisfies $\ve^2=N^{-1} $, on which we
remark further in the next subsection.

By Proposition~\ref{sdethm}, for any fixed $\ve>0$, the process
obtained in the limit $X_N\Rightarrow\tX_\ep$ is a diffusion with
coefficients $\tphi(x)$ as in (\ref{diffcoeff}) and $\tila_\ve
(x)=\ve
^2\tgamma^2\tsig^2(x)$ (we will use the subscript $\ve$ in the notation
of the limiting diffusion to stress its dependence on the small
parameter $\ve$). $\tX_\ve$ is a solution of the stochastic
differential equation
%
\begin{equation}
\label{sdeLD} d\tX_\ve(t) = \tphi\bigl(\tX_\ve(t)\bigr)\,dt+
\ve\tgamma\tsig\bigl(\tX_\ve (t)\bigr)\,dB(t), \qquad\tX_\ve
\in[0,1],
\end{equation}
where $B$ is a standard Brownian motion, a classical case of a
diffusion with small diffusion coefficient.

For many such diffusions $\ep\approx0$ will have little qualitative
effect relative to $\ep=0$; however, suppose that $\tphi$ has two
stable and one unstable equilibria, and thus the potential $\Phi$
defined by $\Phi= -\int\tphi$ is a double-well potential. Since
$\tphi
$ is a polynomial, this is an assumption on the number and type of
zeros of $\tphi$. Explicitly, we will assume that
%
\begin{eqnarray}
\label{doublewell} \exists 0<x_1<x_2<x_3<1
\dvtx \tphi(x_i)=0,
\nonumber
\\[-8pt]
\\[-8pt]
\eqntext{i=1,2,3 \mbox{ and } \tphi '(x_1)<0,
\tphi'(x_2)>0,\tphi'(x_3)<0.}
\end{eqnarray}
Recall also that assumption~(2) implies that at the boundaries we have
$\tphi(0)>0, \tphi(1)<0$. As a consequence, $\tX_\ve$ is a process
whose mean behavior involves monotone convergence to one of two stable
equilibria (determined by the initial conditions), but where the small
amount of noise allows the process to switch from one equilibrium to
the other, creating a bistable system. Precise statements of this
behavior are described by Freidlin--Wentzell theory for random
perturbations of dynamical systems by diffusive noise, \cite{FW}, which
can also cover processes with metastability, \cite{GOV}.
We will follow closely the notation of \cite{GOV}, as these results
apply most directly to $\tX_\ve$. We first need a transformation to
handle the state dependence $\tsig^2(x)$ of the diffusion coefficient,
easily done using \cite{DZ}, Section~5.6. or~\cite{OV} Section~2.5.

For $\tX_\ep$ satisfying (\ref{sdeLD}), large deviation theory for
Gaussian perturbations of dynamical systems, Dembo and Zeitouni (\cite
{DZ} Theorem~5.6.7 and Exercise 5.6.25), state that deviations of $\tX
_\ep$ away from an $\ve$-sized neighborhood of $x_1$ and $x_3$ are
characterized by the large deviation rate\vadjust{\goodbreak} function for $\tX_\ve$ given
by the quasipotential (with respect to $x_i$ and $x_2$)
\begin{eqnarray}
&&I_{x_i,x_2}(\tphi,\tgamma\tsig)\nonumber\\
&&\qquad:=\inf_{s>0} \inf
_\xi \biggl\{ \int_0^s L
\bigl(\xi(u),\xi'(u)\bigr)\,du \Big| \xi\in C^1\bigl([0,s]
\bigr), \xi(0)=x_i,\xi (s)=x_2 \biggr\},\nonumber\\
\eqntext{ i=1,3,}
\end{eqnarray}
where $L$ is the action functional
\[
L\bigl(\xi,\xi'\bigr) = \biggl(\frac{\xi' - \tphi(\xi)}{\tgamma\tsig(\xi
)}
\biggr)^2.
\]
This identifies the most likely paths which leave a neighborhood of
$x_1$ or $x_3$, since every path between $x_1$ and $x_3$ of the
one-dimensional $\tX_\ep$ has to pass through $x_2$. We can write
$L(\xi
,\xi')$ in this form for all such paths because $\tX_\ep$ is
nonsingular away from the boundaries, that is, $\tsig^2(x)>c, \forall
x\in[x_1,x_3]$ for some $c>0$.
If the diffusion coefficient were constant $\tgamma\tsig\equiv1$,
then $L(\xi,\xi') = (\xi' - \tphi(\xi))^2$ and
$I_{x_i,x_2}(\tphi,1)$ would be simply a constant multiple of the
potential, $I_{x_i,x_2}(\tphi,1) = 2(\Phi(x_2)-\Phi(x_i))$, for
$i=1,3$. The quasipotential would be determined by the height of the
potential barrier which $\tX_\ep$ needs to overcome in order to pass
from one equilibrium to the basin of attraction of the other.

To solve the variational problem in our case, we can use a
transformation of the path space $\xi= g(\psi)$ to get an action
functional of the form $L(\xi,\xi') = (\psi' - \tphi(\psi))^2$, from
which we can deduce the explicit form of the rate function
$I_{x_i,x_2}$ for state-dependent $\tgamma\tsig(x)$. For any monotone
$C^1$ function $g$ which for all $s$ is surjective from $C^1([0,s])$ to
$C^1([0,s])$, 
we have
\begin{eqnarray*}
&&I_{x_i,x_2}(\tphi,\tgamma\tsig) = \inf_{s >0} \inf
_\psi \biggl\{ \int_0^s L
\bigl(g\bigl(\psi(u)\bigr), \bigl[g\bigl(\psi (u)\bigr)\bigr]'
\bigr)\,du \Big| \psi\in C^1\bigl([0,s]\bigr),\\
&& \hspace*{166pt}\psi(0)=g^{-1}(x_i),
\psi (s)=g^{-1}(x_2) \biggr\}.
\end{eqnarray*}
We take $g$ which satisfies the (autonomous) first-order ODE
$g'(y)=\tgamma\tsig(g(y))$,
so that
\[
L\bigl(g(\psi), \bigl[g(\psi)\bigr]'\bigr) = \biggl(
\frac{g'(\psi)\psi'- \tphi(g(\psi
))}{\tgamma\tsig(g(\psi))} \biggr)^2= \biggl(\psi'-
\frac{\tphi
(g(\psi
))}{\tgamma\tsig(g(\psi))} \biggr)^2.
\]
Note that $\tgamma\tsig(x)>0, \forall x\in(0,1)$ ensures that $g$ is in
fact strictly increasing on $(0,1)$. Let $h(x) = g^{-1}(x)$. Then, if
$\tphi$ is the vector field of a double-well potential, so is $\al$
defined as
$\al=\frac{\tphi\circ g}{\tgamma\tsig\circ g}$, for the following reasons.
Let $y_i= g^{-1}(x_i) = h(x_i)$; these will be the equilibria for $\al
$, since $\al(y_i) = \tphi(g(y_i))/\tgamma\tsig(g(y_i)) = \tphi
(x_i)/\tgamma\tsig(x_i) = 0$. As for their stability, we have
\begin{eqnarray*}
\al'(y_i) &=& \frac{\tphi'(g(y_i))g'(y_i)\tgamma\tsig(y_i)
- \tphi
(g(y_i)) \tgamma\tsig'(g(y_i))g'(y_i)}{\tgamma^2\tsig^2(g(y_i))} \\
&=&
\frac{\tphi'(g(y_i))g'(y_i)}{\tgamma\tsig(g(y_i))} = \tphi'\bigl(g(y_i)\bigr),
\end{eqnarray*}
where the first equality holds since $\tphi(g(y_i))=0$, and the second
by definition of $g$. Therefore, for each $i$, the stability of $x_i$
under the vector field $\tphi$ is the same as that of $y_i$ with vector
field $\al$; we may therefore define $A = -\int\al$ to be the
(double-well) potential associated with $\al$. Since $L(\xi,\xi')$ is
now in the form $L(g(\psi), [g(\psi)]') =  (\psi'- \al(g(\psi
)) )^2$, we can conclude that
%
\begin{equation}
\label{potdiff2} I_{x_i,x_2}(\tphi,\tgamma\tsig)= I_{y_i,y_2}(\al,1) =
2\bigl(A(y_2)-A(y_i)\bigr),\qquad  i=1,3.
\end{equation}

We can now interpret the results of \cite{GOV} to characterize the
behavior of the process $\tX_\ve$ [defined in \eqref{sdeLD}] as $\ve
\to0$.
Let $D_i$ denote basins of attraction for the deterministic process
(\ref{ode}) driven by the drift $\tphi$, that is, $D_1 = [0,x_2)\ni
x_1, D_2= \{x_2\}, D_3 = (x_2,1]\ni x_3$, and $B_c(x_i)$ denote closed
balls of radius $c>0$ around $x_1,x_3$ such that $B_c(x_1)\subset D_1,
B_c(x_3)\subset D_3$. If the wells of the transformed potential $A$ are
not at equal depth $A(y_1)\neq A(y_3)$, we will without loss of
generality assume $A(y_1)<A(y_3)$. Let
\[
T_\ve= \inf\bigl\{t>0\dvtx \tX_\ve(t) \in
B_c(x_1)\bigr\},\qquad \tT_\ve= \inf\bigl\{
t>T_\ve \dvtx \tX_\ve(t) \in B_c(x_3)
\bigr\}
\]
denote the first hitting time of the neighborhood of the stable
equilibrium with the deeper basin, and the subsequent first hitting
time of the neighborhood of the other stable equilibrium. Let $\beta
_\ve$
be the time-scale on which transitions from $D_3$ to the neighborhood
of $x_1$ happen, defined by $\bP[T_\ve> \beta_\ve| \tX_\ve
(0)=x_3] =
e^{-1}$, and $\tilde{\beta}_\ve$ the one on which the reverse transition
happen, defined by $\bP[\tT_\ve> \tilde{\beta}_\ve| \tX_\ve
(0)=x_1] = e^{-1}$.
The next result establishes that the transition from one stable
equilibrium to the other happens on a time-scale of order $O(e^{\ve
^{-2}(A(y_2)-A(y_i))})$ with $i=3$ and $i=1$, respectively, and that in
the limit as $\ve\to0$ the transition times have an exponential distribution.

\begin{proposition}\label{LD1} If $\tphi$ satisfies (\ref{doublewell}),
then the transitions of $\tX_\ve$ from $D_3$ to $B_c(x_1)$ and from
$D_1$ to $B_c(x_3)$ satisfy:
\begin{eqnarray*}
\mathrm{(i)}&&\quad \lim_{\ve\to0} \bP\bigl[T_\ve>t
\beta_\ve| \tX_\ve(0)=x\in D_3\bigr] =
e^{-t}\qquad\forall t>0,\\
&&\quad \lim_{\ve\to0} \bP\bigl[
\tT_\ve>t \tilde{\beta }_\ve | \tX_\ve(0)=x\in
D_1\bigr] = e^{-t}\qquad\forall t>0;
\\
\mathrm{(ii)}&&\quad \lim_{\ve\to0} \ve^2 \ln\beta_\ve=
I_{x_3,x_2}(\tphi ,\tgamma \tsig)=2\bigl(A(y_2)-A(y_3)
\bigr),\\
&&\quad \lim_{\ve\to0} \ve^2 \ln\tilde{\beta
}_\ve= I_{x_1,x_2}(\tphi,\tgamma\tsig)=2\bigl(A(y_2)-A(y_1)
\bigr).
\end{eqnarray*}
\end{proposition}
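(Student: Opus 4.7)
The plan is to reduce this state-dependent small-noise diffusion to a constant-coefficient problem via the Lamperti-type transformation $h=g^{-1}$ already constructed in the excerpt, and then invoke the metastability theorems of \cite{GOV} for the transformed process. The heavy lifting — namely the identification of the quasipotential with $2(A(y_2)-A(y_i))$ via the change-of-variables ODE $g' = \tgamma\tsig(g)$ — has already been carried out in \eqref{potdiff2}; what remains is a rigorous translation of \cite{GOV} through the diffeomorphism $h$.

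First I would set $\tY_\ve(t) := h(\tX_\ve(t))$. Since $h'(x) = 1/(\tgamma\tsig(x))$ from the ODE defining $g$, applying It\^o's formula to \eqref{sdeLD} yields
\begin{align*}
d\tY_\ve(t) \;=\; \al(\tY_\ve(t))\,dt \;+\; \ve\, dB(t) \;+\; \frac{\ve^2}{2}\, h''(\tX_\ve(t))\,\tgamma^2\tsig^2(\tX_\ve(t))\,dt,
\end{align*}
so $\tY_\ve$ is a small \emph{additive}-noise perturbation of the gradient flow $\dot y = \al(y) = -A'(y)$ in the double-well potential $A$. The extra It\^o drift is $O(\ve^2)$ uniformly on compact subintervals of the image of $h$, hence a smooth perturbation that does not affect any quantity at the logarithmic scale $\ve^2\ln(\cdot)$.

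Next, because $h$ is a strictly increasing $C^1$ diffeomorphism of $(0,1)$ mapping each $x_i$ to $y_i=h(x_i)$ and mapping the basins $D_i$ onto their images, the events $\{\tX_\ve(t)\in B_c(x_i)\}$ coincide, for some $c'>0$ depending on $c$, with events $\{\tY_\ve(t)\in B_{c'}(y_i)\}$. Thus $T_\ve$ and $\tT_\ve$ agree with the corresponding hitting times of $\tY_\ve$. Applying the main theorems of \cite{GOV} directly to $\tY_\ve$ in the double-well potential $A$ (with, WLOG, $A(y_1)<A(y_3)$) gives the exponential limit laws in (i), together with the logarithmic asymptotics $\ve^2\ln\be_\ve \to 2(A(y_2)-A(y_3))$ and $\ve^2\ln\tilde{\be}_\ve \to 2(A(y_2)-A(y_1))$. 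Combining with \eqref{potdiff2} yields (ii).

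The main obstacle is that the transformation degenerates at the boundary: $\tsig(0)=\tsig(1)=0$ means $g'$ vanishes there, $h(0),h(1)$ may be improper, and the It\^o correction becomes unbounded near $\{0,1\}$. The standard remedy is localization. Assumption~2 guarantees $\tphi(0)>0$, $\tphi(1)<0$, so under the unperturbed flow the boundary is unreachable; a Freidlin–Wentzell lower bound on the quasipotential near $\{0,1\}$ shows that on any time interval of length $O(e^{C/\ve^2})$ with $C$ less than the interior barrier heights, the process $\tX_\ve$ remains with probability tending to $1$ in a compact subinterval $[\eta,1-\eta]\subset(0,1)$ on which $\al$ is Lipschitz, $g$ is a smooth diffeomorphism, and the It\^o drift is uniformly bounded. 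Once localized this way, the hypotheses of \cite{GOV} apply to $\tY_\ve$ verbatim, and the reduction is complete. A secondary minor point is choosing $c$ small enough that $B_c(x_i)\subset D_i$ and that the image neighborhoods $B_{c'}(y_i)$ lie strictly inside the corresponding basins for $\al$; both are standard and the logarithmic limits in (ii) are independent of $c$.
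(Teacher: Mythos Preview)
Your approach is correct but differs from the paper's in an instructive way. The paper's proof is extremely brief: it cites Theorem~1 of \cite{GOV} directly for the exponential limit law in~(i), and for~(ii) it cites Theorem~4.2 of Chapter~4 in \cite{FW} (which gives $\ve^2\ln T_\ve \to I_{x_i,x_2}(\tphi,\tgamma\tsig)$ in probability for the original process $\tX_\ve$), then plugs in the identity \eqref{potdiff2}. In other words, the paper uses the $g$-transformation \emph{only} at the level of the path-space variational problem, to evaluate the quasipotential explicitly; it applies the abstract Freidlin--Wentzell/GOV machinery to the state-dependent diffusion $\tX_\ve$ itself.

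You instead push the transformation through the process: $\tY_\ve=h(\tX_\ve)$ becomes an additive-noise perturbation of the gradient flow in the potential $A$ (modulo an $O(\ve^2)$ It\^o correction), and then \cite{GOV} applies in its most classical form. This buys you transparency---you need only the constant-coefficient version of \cite{GOV}, and the identification with $2(A(y_2)-A(y_i))$ is immediate rather than requiring \eqref{potdiff2} separately---at the cost of two extra technical steps the paper avoids: absorbing the $\ve$-dependent It\^o drift (which, as you note, is harmless at the logarithmic scale and for the exponential law, though strictly speaking \cite{GOV} must be checked to allow such perturbations), and localizing away from the degenerate boundary where $h'$ blows up. Your localization sketch via Assumption~2 and a lower bound on the boundary quasipotential is the right idea. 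The paper's route sidesteps both issues because the general FW framework already accommodates nondegenerate state-dependent $\tsig$ on the interval $[x_1,x_3]$, and the quasipotential computation was done beforehand; your route is more self-contained but carries a bit more overhead.
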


\begin{pf} (i) is a restatement of Theorem~1 in \cite{GOV}. (ii)
follows from Theorem~4.2 of Chapter~4 in \cite{FW}, which states that
for any $\delta>0$, $\lim_{\ve\to0} \bP[|\ve^2 \ln T_\ve
-I_{x_3,x_2}(\tphi,\tgamma\tsig)|>\delta| \tX_\ve(0)=x_3]=0$ and
$\lim_{\ve\to0} \bP[|\ve^2 \ln\tT_\ve-I_{x_1,x_2}(\tphi,\tgamma
\tsig
)|>\delta| \tX_\ve(0)=x_1]=0$, and from our explicit calculation of
the value of the quasipotential in (\ref{potdiff2}).
\end{pf}

The following result characterizes the long-term behavior on the
natural time-scale (determined by $\beta_\ve$) for transition to the
stable point with the deeper basin. Let $R_\ve=e^{\ve^{-2}a}$ for some
$a\in(0,2(A(y_2)-A(y_3)))$, so that $R_\ve\to\infty$ while $R_\ve
/\beta
_\ve\to0$ as $\ve\to0$. Again following \cite{GOV}, define the
measure-valued process $(\nu_t^\ve)_{t\ge0}$ by
\[
\nu_t^\ve(f) = \frac{1}{R_\ve} \int
_{\beta_\ve t}^{\beta_\ve t
+R_\ve} f\bigl(\tX _\ve(s)\bigr)\,ds
\]
for any (bounded) continuous function $f$ on $[0,1]$. The measure $\nu
_t^\ve$ approximates the law for the location of $\tX_\ve(T)$ on the
time-scale $T=\beta_\ve t$.

Note that if $A(y_1)<A(y_3)$, then the results of (ii) imply that
$\tilde{\beta}_\ve/\beta_\ve\to\infty$ as $\ve\to0$, so that
$\inf_{x\in
D_1}\bP[\tT_\ve/\beta_\ve>t| \tX_\ve(0)=x]\to1$. Hence, in this case
metastability is characterized by the fact that the transitions into
the deeper well are on an exponentially faster time-scale, relative to
which the transitions back into the less deep well will not be noticed.
Let $\bP_x[ \cdot]$ denote $\bP[ \cdot |\tX_\ve(0)=x]$.

\begin{proposition}\label{LD2}
For each $x \in D_3$, continuous function $f$ on $[0,1]$, and $\delta
>0$ we have
\begin{eqnarray*}
\lim_{\ve\to0}\bP_x \Bigl[\sup_{s\in[0, {(T_\ve-3R_\ve)
}/{R_\ve
}]}\bigl|
\nu_t^\ve(f)-f(x_3)\bigr|>\delta \Bigr]&=&0, \\
\lim
_{\ve\to0}\bP \Bigl[\sup_{s\in[{T_\ve}/{\beta_\ve},{(\tT_\ve-3R_\ve)}/{R_\ve}]} \bigl|
\nu_t^\ve (f)-f(x_1)\bigr|>\delta \Bigr]&=& 0.
\end{eqnarray*}
Moreover, we have convergence in law on the space of cadlag paths (with
the Skorokhod topology) of $(\nu_t^\ve)_{t\ge0}$ to a jump process
$(\nu_t)_{t\ge0}$ such that:
\begin{longlist}[(ii)]
\item[(i)] (Metastability). If $A(x_1)<A(x_3)$, then $(\nu_t)_{t\ge0}$
is given by
\[
\nu_t = \cases{
\delta_{x_3}, &\quad $t < T,$
\vspace*{2pt}\cr
\delta_{x_1}, &\quad $t \geq T,$}
\]
where $T$ is an exponential mean $1$ random variable.
\item[(ii)] (Bistability). If $A(x_1)=A(x_3)$ and a sequence of
transition times is defined by   $\tT_\ve^0=0$, and
\begin{eqnarray*}
T^i_\ve&=& \inf\bigl\{t>\tT^{i-1}_\ve
\dvtx \tX_\ve(t) \in B_c(x_1)\bigr\}, \\
\tT
^i_\ve& =& \inf\bigl\{t>T^i_\ve
\dvtx \tX_\ve(t) \in B_c(x_3)\bigr\},\qquad i=1,2,
\ldots,
\end{eqnarray*}
then $(\nu_t)_{t\ge0}$ is given by
\[
\nu_t = \cases{ %
\delta_{x_3}, & \quad $T_{2i} \leq t < T_{2i+1},$
\vspace*{2pt}\cr
\delta_{x_1}, &\quad $T_{2i+1} \leq t < T_{2i+2},$}\qquad
 i=0,1,2,\ldots,
\]
where $T_0=0$, and $\{T_i\}_{i \geq0}$ are arrival times in a rate $1$
Poisson process.
\end{longlist}
\end{proposition}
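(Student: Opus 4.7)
The plan is to follow \cite{GOV} closely: first reduce to the case of unit diffusion coefficient via the change of variables $\xi = g(\psi)$ introduced before Proposition \ref{LD1}, so that the transformed process has drift $\al$ with double-well potential $A$, and then apply Freidlin--Wentzell estimates directly in these coordinates. Since $\nu_t^\ve$ is defined through $f(\tX_\ve(s))$, it suffices to track the location of $\tX_\ve$ in $[0,1]$; the transformation serves only to identify the correct transition rates via (\ref{potdiff2}) and to ensure the quasipotential is a true potential difference.

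For the pointwise concentration (first claim), I would combine two standard estimates. Within each basin $D_i$ and outside $B_c(x_i)$, the deterministic drift $\tphi$ pulls $\tX_\ve$ back to $x_i$ in $O(1)$ time with overwhelming probability, so the total sojourn time outside $B_c(x_i)$ before any true exit from $D_i$ is $O_\bP(1)$, in particular $o(R_\ve)$. Continuity of $f$ gives $|f(\tX_\ve(s))-f(x_i)|\le \om_f(c)$ whenever $\tX_\ve(s)\in B_c(x_i)$, where $\om_f$ is the modulus of continuity of $f$. Combining these, on any window $[\be_\ve t,\be_\ve t+R_\ve]\subset[0,T_\ve-3R_\ve]$ (the $3R_\ve$ buffer excludes windows straddling the transition) one gets $|\nu_t^\ve(f)-f(x_3)|\le \om_f(c)+o_\bP(1)$ uniformly in $t$; letting $c\to 0$ yields the first display, and the analogous argument on $[T_\ve,\tT_\ve-3R_\ve]$ handles the second.

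For the convergence in law of the measure-valued process, the key tool is the strong Markov property applied at the successive hitting times $T_\ve^i$ and $\tT_\ve^i$. In case (i), Proposition \ref{LD1}(ii) gives $\tilde\be_\ve/\be_\ve = \exp(2\ve^{-2}(A(y_3)-A(y_1)) + o(\ve^{-2})) \to \infty$ whenever $A(y_1) < A(y_3)$, so after the single transition, whose rescaled time converges to an $\mathrm{Exp}(1)$ variable by Proposition \ref{LD1}(i), the process remains in $B_c(x_1)$ for all remaining times of order $\be_\ve$, producing the limit $\nu_t = \delta_{x_3}\mathbf{1}_{t<T}+\delta_{x_1}\mathbf{1}_{t\ge T}$. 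In case (ii), equal well depths make $\be_\ve$ and $\tilde\be_\ve$ agree to leading exponential order, so applying Proposition \ref{LD1}(i) inductively via the strong Markov property shows that the rescaled alternating waiting times form an i.i.d.\ sequence of $\mathrm{Exp}(1)$ variables in the limit; hence $\{T_i/\be_\ve\}_{i\ge 0}$ converges to the arrival times of a rate-$1$ Poisson process.

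The main obstacle will be lifting this finite-dimensional convergence of transition times to convergence in the Skorokhod topology on cadlag paths for $(\nu_t^\ve)_{t\ge 0}$. Tightness demands that the ``smear'' of $\nu_t^\ve$ across each transition occupy only an $o(1)$ fraction of the rescaled time, which is precisely why the scale separation $R_\ve\to\infty$ with $R_\ve/\be_\ve\to 0$ is imposed: a window of length $R_\ve$ is long enough to average out fluctuations inside a single well but short enough that, with high probability, it contains at most one well-crossing. Given tightness, the finite-dimensional identification above forces any subsequential limit to agree in law with the described pure-jump process $(\nu_t)_{t\ge 0}$, completing the proof.
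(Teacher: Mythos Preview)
Your proposal is correct and follows essentially the same route as the paper: the first display and part (i) are taken directly from Theorem~2 of \cite{GOV}, and part (ii) is obtained by iterating Proposition~\ref{LD1}(i) via the strong Markov property at the successive hitting times $T_\ve^i,\tT_\ve^i$, exactly as you outline. The paper's proof is terser---it simply cites \cite{GOV} for (i) and the concentration statement, and sketches the strong-Markov iteration for (ii)---whereas you unpack more of the internal mechanics (the drift-pullback estimate inside each well, the role of the buffer $3R_\ve$, and the tightness afforded by $R_\ve/\be_\ve\to 0$); but these are precisely the ingredients of the \cite{GOV} argument being invoked, so there is no substantive difference in approach.
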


\begin{pf}
(i) is simply a restatement of the main result Theorem~2 in \cite{GOV},
and (ii) is an easy extension of this result. Since $A(y_1)=A(y_3)$, we
have $\beta_\ve=\tilde{\beta}_\ve$ and the transitions from one stable
equilibrium to the other happen on the same exponential time-scale. By
Proposition~\ref{LD1}(i), on the time-scale $T=\beta_\ve t$, in the
limit as $\ve\to0$, $T_\ve^1$ is exponentially distributed with
parameter 1, and $\tX^\ve(T_\ve^1)\in D_1$. By the strong Markov
property of $\tX_\ve$, the time increment to the subsequent transition
$\tT^1_\ve-T^1_\ve$ is independent of $T^1_\ve$, and the same Theorem
implies that on the time-scale $T=\tilde{\beta}_\ve t=\beta_\ve t$,
in the
limit as $\ve\to0$, $\tT_\ve^1-T^1_\ve$ is also exponentially
distributed with parameter 1, and $\tX^\ve(\tT_1^\ve)\in D_3$. The rest
now follows from the same arguments as in the proof of Theorem~2 in
\cite{GOV}.
\end{pf}

\subsection{Finite-system-size effects}

The above results relied on using an additional parameter $\ve$ to
separate the scaling of the noise from the scaling of the drift,
obtaining a diffusion approximation for the limiting process first,
then applying large deviation techniques for the diffusion (\ref
{sdeLD}) with small perturbation coefficient $\ve$. A priori, there is
no reason why the limits need be taken in that order. Another approach
is to apply large deviations techniques directly to the rescaled
process $X_N=N^{-1}X_A$, and obtain results that describe the large
time-scale behavior of $X_N$ relative to the equilibrium points of the
limiting drift (\ref{doublewell}). It is natural to compare these
results to those for the associated diffusion with small diffusion
coefficient. We will identify the exact relation of time-scales of the
reaction system and the splitting mechanism for which large deviation
rates of these two methods can be compared.


This question is most easily answered when the reactions and the
splitting/resampling mechanism make only unit net changes at each step,
so that $X_A$ is a birth--death process. Assume, as before, all the
reaction constants have the standard scaling $\ka_\zeta^{ab}=\tka
_\zeta
^{ab}N^{1-(a+b)}$, and assume again the splitting rate is of the form
$\gamma(x,N,\ve)=\ve^2_N\gamma(x,N)$, except now the parameter $\ve
^2_N$ depends on $N$ as well. Since, by assumption~(6), the change due to
splitting is unbiased, we have $p_{x,x+1}=p_{x,x-1}=\frac{1}2$, and the
splitting variance is $\sigma_N^2=1$. As earlier $\gamma(x,N)$ is
assumed to satisfy condition in assumption (${7}^*$), that is, $\sup_x|\gamma(x,N)N^{-2}-\tgamma^2\tsig^2(\frac{x}N)|\to0$.

Suppose $X_A$ is a Markov jump process with rates $N\tr_+(x)\,dt=\bP
[X_A(t+dt)=x+1|X_A(t)=x]$ and $N\tr_-(x)\,dt=\bP[X_A(t+dt)=x-1|X_A(t)=x]$
such that $\ln\tr_+, \ln\tr_-$ are bounded Lipschitz continuous
functions, and $X_N=N^{-1}X_A$ is its rescaled version. Then, according
to the Freidlin--Wentzell large deviation theory for Markov jump
processes, \cite{SW} Theorem~6.17, since transitions between the two
stable equilibria $x_1,x_3$ of $\tphi$ are uniquely achieved by
crossing the potential barrier at $x_2$, the deviations of $X_N$ away
from neighborhoods of $x_1$ and $x_3$ are characterized by the large
deviation rate function for $X_N$ given by the quasipotential (with
respect to $x_i$ and $x_2$),
\begin{eqnarray}
&&\imath_{x_i,x_2}(\tr_+,\tr_-)\nonumber\\
&&\qquad:=\inf_{T>0} \inf
_\xi \biggl\{ \int_0^T
\ell \bigl(\xi(u),\xi'(u)\bigr)\,du \Big| \xi\in C^1
\bigl([0,T]\bigr), \xi(0)=x_i,\xi (T)=x_2 \biggr\},\nonumber\\
\eqntext{i=1,3,}
\end{eqnarray}
where $\ell$ is the action functional in variational form
\[
\ell(x,y)=\sup_{\theta} \bigl\{\theta y - \bigl(\tr_+(x)
\bigl(e^{\theta
}-1\bigr)+\tr _-(x) \bigl(e^{-\theta}-1\bigr) \bigr)
\bigr\}
\]
determined from the jump rates of the process $\tr_+$ and $\tr_-$.
Calculus of variations results, see \cite{SW} Theorem~11.15, give an
explicit expression for the quasipotential as
%
\begin{equation}
\label{bdquasipot} \imath_{x_i,x_2}(\tr_+,\tr_-)=\int_{x_i}^{x_2}
\ln \biggl(\frac{\tr
_-(x)}{\tr_+(x)} \biggr)\,dx,\qquad  i=1,3.
\end{equation}

If $X_A$ is a birth--death process whose rates $r_+(x),r_-(x)$ are such
that $r^N_+(x)=N^{-1}r_+(Nx)\to\tr_+(x)$ and
$r^N_-(x)=N^{-1}r_-(Nx)\to
\tr_-(x)$ uniformly in $x\in[0,1]$, then the logarithmic
moment-generating function $g_N(x,\theta)$ of the jump measure $\mu
_N(x, \cdot)=r^N_+(x)\delta_{1}+r^N_-(x)\delta_{-1}$, for fixed
$\theta
$, also converges uniformly in $x\in[0,1]$
\begin{eqnarray*}
g_N(x,\theta)&=&\int\bigl(e^{\theta z}-1\bigr){
\mu_N}(x,dz)=r^N_+(x) \bigl(e^{\theta
}-1
\bigr)+r^N_-(x) \bigl(e^{-\theta}-1\bigr)
\\
&\mathop{\longrightarrow}\limits_{N\to\infty}&\tr_+(x) \bigl(e^{\theta
}-1
\bigr)+\tr_-(x) \bigl(e^{-\theta}-1\bigr)=\int\bigl(e^{\theta z}-1\bigr){
\mu }(x,dz)=g(x,\theta)
\end{eqnarray*}
to the logarithmic moment generating function of the jump measure $\mu
(x,\cdot)=\tr_+(x)\delta_{1}+\tr_-(x)\delta_{-1}$. Since the Legendre
transform $\ell_N(x,y)$ of $g_N(x,y)$ has the explicit form
\begin{eqnarray*}
\ell_N(x,y)&=&\sup_{\theta} \bigl\{\theta y
-g_N(x,\theta) \bigr\}
\\
&=&\ln \biggl(\frac{y+\sqrt{y^2+4r^N_+(x)r^N_-(x)}}{2r^N_+(x)} \biggr)\\
&&{}-\sqrt {y^2+4r^N_+(x)r^N_-(x)}+r^N_+(x)+r^N_-(x)
\end{eqnarray*}
for fixed $y$, we also have uniform convergence in $x\in[\delta
,1-\delta
]$, for any $\delta>0$,
\[
\ell_N(x,y)=\sup_{\theta} \bigl\{\theta y
-g_N(x,\theta) \bigr\} \mathop {\longrightarrow} _{N\to\infty}
\sup_{\theta} \bigl\{\theta y -g(x,\theta) \bigr\}=\ell(x,y).
\]
Consequently, the large deviation behavior for $X_N=N^{-1}X_A$ is
determined by the same action functional $\ell(x,y)$ and exit times in
terms of the same quasipotential $\imath_{x_i,x_2}(\tr_+,\tr_-), i=1,3$
as above.

For the system of reactions and splitting, birth and death rates for
the process $X_N$, $r_+$ and $r_-$, respectively, are of the form
%
\begin{eqnarray}
\label{bdrates1} r_+(x)&=&N\sum_{(a,b,1)\in\calI}
\tka_1^{ab} \biggl(\frac{x}{N} \biggr)_{a,N}
\biggl(1-\frac{x}{N} \biggr)_{b,N} + \frac{1}2
\ve^2_N\gamma (x,N),
\\
\label{bdrates2} r_-(x)&=& N\sum_{(a,b,-1)\in\calI}
\tka_{-1}^{ab} \biggl(\frac
{x}{N} \biggr)_{a,N}
\biggl(1-\frac{x}{N} \biggr)_{b,N} + \frac{1}2
\ve^2_N\gamma(x,N).
\end{eqnarray}
We wish to obtain results for the time-scale of exit from a
neighborhood of a stable equilibrium for the rescaled process $X_N$
that are analogous to those for $\tX_\ve$ obtained in
Proposition~\ref
{LD1}. To this end, we will have to make some assumptions about the
behavior of $r_+$ and $r_-$ in order to use the quasipotential $ \imath
_{x_i,x_2}(\tr_+,\tr_-)$.
Let $\beta_{\ve_N}$ and $\tilde{\beta}_{\ve_N}$ denote time-scales
of the
transitions of the process $X_N$ from $D_3$ to $B_c(x_1)$, and from
$D_1$ to $B_c(x_3)$, respectively, in the analogous way as $\beta_\ve$ and
$\tilde{\beta}_\ve$ were for the singularly perturbed diffusion. The next
result establishes the time-scale of transition for $X_N$ from one
stable equilibrium to the other.

\begin{proposition}\label{LD3}
If $X_A$ is a birth--death chain, whose rates satisfy 
\begin{equation}
\frac{r_+(N\cdot)}{N}\to\tr_+(\cdot),\qquad \frac{r_-(N\cdot)}{N}\to \tr _-(\cdot)\qquad \mbox{uniformly in }[0,1]
\end{equation}
such that $\tphi=\tr_+-\tr_-$ satisfies (\ref{doublewell}), then the
mean times $\beta_{\ve_N}$ and $\tilde{\beta}_{\ve_N}$ for
transitions of
$X_N$ from $D_3$ to $B_c(x_1)$, and from $D_1$ to $B_c(x_3)$,
respectively, are given in terms of $ \imath_{x_i,x_2}(\tr_+,\tr_-)$
from (\ref{bdquasipot}) by
\[
\lim_{N \to\infty} \frac{1}N \ln\beta_{\ve_N} = \imath
_{x_3,x_2}(\tr_+,\tr _-),\qquad \lim_{N \to\infty} \frac{1}N
\ln\tilde{\beta}_{\ve_N} = \imath _{x_1,x_2}(\tr_+,\tr_-).
\]
\end{proposition}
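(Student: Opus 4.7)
The plan is to mirror the argument for Proposition~\ref{LD1}, but replacing the small-noise diffusion theory by the Freidlin--Wentzell large deviations framework for Markov jump processes. The assumptions of the Proposition furnish exactly the inputs needed: the hypothesized uniform convergence $r_\pm(N\cdot)/N\to\tr_\pm(\cdot)$ on $[0,1]$, combined with the uniform convergence of $g_N\to g$ and of the Legendre transforms $\ell_N\to\ell$ already established in the paragraphs preceding the statement, yields via \cite{SW} Theorem 6.17 a sample-path large deviation principle for $X_N$ on $D([0,T],[0,1])$ with rate $N$ and good rate function $I_T(\xi)=\int_0^T\ell(\xi(u),\xi'(u))\,du$ on absolutely continuous paths.

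Given this LDP, I would invoke the standard exit-time theorem (Theorem~4.1--4.2 in Ch.~4 of \cite{FW}, adapted to Markov chains in \cite{SW}): starting from $x_3\in D_3$, the mean exit time $\tau^{\mathrm{out}}_N$ from $D_3=(x_2,1]$ satisfies
\[
\lim_{N\to\infty}\tfrac{1}{N}\ln \bE_{x_3}[\tau^{\mathrm{out}}_N]=V(x_3,\partial D_3),\qquad V(x_3,y):=\inf_{T>0}\inf_{\xi(0)=x_3,\,\xi(T)=y}I_T(\xi).
\]
Because $X_N$ is one-dimensional, an exit from $D_3$ either crosses the unstable equilibrium $x_2$ or reaches the boundary $\{1\}$; however, under \eqref{doublewell} the drift $\tphi$ is negative on a neighborhood of $1$ and positive on $(x_2,x_3)$, so any path from $x_3$ to $1$ fights the drift on an interval of positive length and therefore has strictly positive cost past $x_3$, while the cheapest path to $x_2$ is simply a time-reversed deterministic trajectory. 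A one-variable comparison shows $V(x_3,1)>V(x_3,x_2)=\imath_{x_3,x_2}(\tr_+,\tr_-)$, so the cheapest exit is through the saddle $x_2$.

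The explicit evaluation $\imath_{x_3,x_2}(\tr_+,\tr_-)=\int_{x_3}^{x_2}\ln(\tr_-/\tr_+)\,dx$ is precisely the content of \cite{SW} Theorem~11.15 and may be obtained directly from the Euler--Lagrange equation for $\ell(x,y)$, whose relevant extremal is the reversed deterministic trajectory $\xi'=-\tphi(\xi)$. The remaining step is to show that the additional time for $X_N$, after exiting $D_3$, to reach $B_c(x_1)$ is negligible on the logarithmic scale: the post-exit location lies with overwhelming probability inside $D_1$, from which Kurtz's law of large numbers gives that $X_N$ shadows the deterministic solution of \eqref{ode} and enters $B_c(x_1)$ in time of order $O(\log N)$, which is dwarfed by $\bE_{x_3}[\tau^{\mathrm{out}}_N]=e^{N\imath_{x_3,x_2}+o(N)}$. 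Hence $\be_{\ve_N}$ and $\bE_{x_3}[\tau^{\mathrm{out}}_N]$ agree on the logarithmic scale, proving the first claim; the argument for $\tilde{\be}_{\ve_N}$ is identical with $x_1$ and $x_3$ interchanged.

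The main obstacle I anticipate is checking the regularity hypotheses of \cite{SW} Theorem~6.17 (positivity, boundedness and Lipschitz continuity of $\ln\tr_\pm$) on a neighborhood of $[x_1,x_3]$. The polynomial contribution from the reactions combined with the assumed positivity and continuity of $\tsig^2$ on $(0,1)$ makes this routine on any compact subinterval of $(0,1)$, and since all paths entering the infimum defining $\imath_{x_i,x_2}$ stay in such a set, boundary degeneracies at $\{0,1\}$ (where the rates may vanish and $\ell$ can blow up) are harmless and can be excluded by restricting the variational problem to such a compact subinterval.
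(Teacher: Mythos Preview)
Your proposal is correct and follows essentially the same route as the paper: both invoke the Freidlin--Wentzell large-deviation exit theory for jump Markov processes (the paper cites \cite{FW} Ch.~5 and \cite{DZ} Thm~5.7.11, you cite \cite{SW} Thm~6.17 and the exit theorems, which is the same machinery) together with the explicit quasipotential formula \eqref{bdquasipot}. Your write-up is in fact more detailed than the paper's brief citation---in particular your remarks on why the exit goes through $x_2$, on the negligibility of the post-exit time to reach $B_c(x_1)$, and on the regularity away from $\{0,1\}$ are useful elaborations that the paper leaves implicit.
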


\begin{pf}
This is just the statement of results for the exit problem for the jump
Markov chain $X_N$ in terms of its quasipotential, obtained by Freidlin
and Wentzell~\cite{FW}; see Theorems~1.2 and~2.1 of Chapter~5,
the discussion at the beginning of Section~4 and Theorem~4.3 of Chapter~5;
also see Theorem~5.7.11 of Chapter~5 in \cite{DZ}. Uniform
convergence of the action potential, that is, the Legendre transform
$\ell_N$, is necessary in order to express the quasipotential $\imath
_{x_i,x_2}$ in terms of the limiting rates $\tr_+,\tr_-$. All of the
assumptions on the equilibrium points of $\tphi(x)=\tr_+(x)-\tr_-(x)$
in (\ref{doublewell}) are also necessary, since $\tphi$ determines the
fluid limit of the jump Markov chain $X_N$.
\end{pf}

Finally, we can establish the time-scale separation under which the
switching results for the rescaled jump process $X_N$ and the diffusion
$\tX_\ve$ with the small diffusion coefficient can be compared.

\begin{theorem}\label{corLD} If the reaction system has increments of
size $\{1,-1\}$ only, its rates have standard scaling $\ka_\zeta
^{ab}=\tka_\zeta^{ab}N^{1-(a+b)}$, its limiting drift $\tphi$ satisfies
(\ref{doublewell}) and if the splitting mechanism has increments of
size $\{1,-1\}$, its rate is $\ve^2_N\gamma(x,N)$ where $\gamma(x,N)$
satisfies assumption (${7}^*$) and
\[
N\ve^2_N\to1,
\]
then results based on large deviations for $X_N$ in Proposition~\ref
{LD3} are more informative than results based on large deviations for
the diffusion $\tX_\ve$ with the small perturbation parameter $\ve_N$
in Proposition~\ref{LD1}, that is,
\[
\imath_{x_i,x_2}(\tr_+,\tr_-)\le I_{x_i,x_2}(\tphi,\tgamma\tsig).
\]
\end{theorem}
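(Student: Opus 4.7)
The plan is to reduce the theorem to a pointwise comparison between the two integrands written as line integrals in $x$, which will then follow from the elementary bound $\ln(1+y)\le y$.

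First I will identify the limiting birth and death rates of the rescaled chain $X_N$ under the scaling $N\ve_N^2\to 1$. Passing to the limit in (\ref{bdrates1})--(\ref{bdrates2}) using Assumption~${7.\!\!}^*$ and the standard scaling of the reaction constants, one obtains uniform limits $N^{-1}r_\pm(N\,\cdot\,)\to \tr_\pm$ with
\[
\tr_+(x)\;=\;R_+(x)+\tfrac12\tgamma^2\tsig^2(x),\qquad \tr_-(x)\;=\;R_-(x)+\tfrac12\tgamma^2\tsig^2(x),
\]
where $R_\pm\ge 0$ are the polynomial reaction contributions and $\tphi=\tr_+-\tr_-$ agrees with the drift of \eqref{ode}. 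Positivity and Lipschitz regularity of $\tr_\pm$ on the relevant interval are what Proposition~\ref{LD3} needs to give $\imath_{x_i,x_2}(\tr_+,\tr_-)=\int_{x_i}^{x_2}\ln(\tr_-/\tr_+)\,dx$. In parallel I will rewrite the diffusion quasipotential in $x$-coordinates by substituting $x=g(y)$ (with $g'=\tgamma\tsig\circ g$, hence $dy=dx/(\tgamma\tsig(x))$) in \eqref{potdiff2} and using $A'=-\al$, obtaining
\[
I_{x_i,x_2}(\tphi,\tgamma\tsig)\;=\;-2\int_{x_i}^{x_2}\frac{\tphi(x)}{\tgamma^2\tsig^2(x)}\,dx\;=\;\int_{x_i}^{x_2}\frac{2(\tr_-(x)-\tr_+(x))}{\tgamma^2\tsig^2(x)}\,dx,
\]
with the same orientation as $\imath_{x_i,x_2}$ ($x_1<x_2$ for $i=1$, $x_3>x_2$ for $i=3$).

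The core step is the pointwise inequality: for any $u>0$ and $\alpha,\beta\ge 0$ with $\beta\ge\alpha$,
\[
\ln\frac{\beta+u}{\alpha+u}\;=\;\ln\!\left(1+\frac{\beta-\alpha}{\alpha+u}\right)\;\le\;\frac{\beta-\alpha}{\alpha+u}\;\le\;\frac{\beta-\alpha}{u},
\]
using $\ln(1+y)\le y$ for the first inequality and $\alpha\ge 0$ for the second. Applied with $u=\tfrac12\tgamma^2\tsig^2(x)$, $\alpha=R_+(x)\wedge R_-(x)$, $\beta=R_+(x)\vee R_-(x)$, and observing that $\tr_\pm=R_\pm+u$, this becomes
\[
\left|\ln\frac{\tr_-(x)}{\tr_+(x)}\right|\;\le\;\frac{2\,|\tr_-(x)-\tr_+(x)|}{\tgamma^2\tsig^2(x)},\qquad x\in(0,1),
\]
with the two sides sharing the sign of $\tr_--\tr_+$. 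On $(x_1,x_2)$ one has $\tphi<0$ and $\tr_->\tr_+$, so integrating $\int_{x_1}^{x_2}$ gives $\imath_{x_1,x_2}\le I_{x_1,x_2}$ directly; on $(x_2,x_3)$ one has $\tphi>0$ and $\tr_+>\tr_-$, so applying the bound (with roles of $\tr_+$ and $\tr_-$ swapped), integrating $\int_{x_2}^{x_3}$, and reversing orientation to $\int_{x_3}^{x_2}$ gives $\imath_{x_3,x_2}\le I_{x_3,x_2}$.

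The step I expect to take the most care is the change of variables in the second display above: aligning the monotone $y$-parametrization underlying \eqref{potdiff2} with the oriented $x$-integral from \eqref{bdquasipot}, correctly on $(x_1,x_2)$ and on $(x_3,x_2)$, and tracking how $\tphi=\tr_+-\tr_-$ enters with the right sign in each case. Once the two integrands of $I$ and $\imath$ are matched as $x$-integrals, the pointwise estimate from $\ln(1+y)\le y$ closes the argument at once.
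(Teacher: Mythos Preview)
Your proposal is correct and follows essentially the same route as the paper: identify the limiting rates $\tr_\pm$ under $N\ve_N^2\to 1$, rewrite both $\imath_{x_i,x_2}$ and $I_{x_i,x_2}$ as oriented $x$-integrals (the latter via the substitution $x=g(y)$, $dy=dx/(\tgamma\tsig(x))$), and compare integrands pointwise using $\ln(1+y)\le y$ together with $\tr_\pm\ge\tfrac12\tgamma^2\tsig^2$. The paper packages the log bound as a two-sided sandwich $-\!\int\tphi/\tr_-\le\imath\le-\!\int\tphi/\tr_+$ before passing to $I$, whereas you go directly to the bound with $\tfrac12\tgamma^2\tsig^2$ in the denominator; these are minor presentational variants of the same argument.
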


\begin{pf}
For $r^N_+(x)=N^{-1}r_+(Nx)$ and $r^N_-(x)=N^{-1}r_-(Nx)$ by (\ref
{bdrates1})--(\ref{bdrates2}), we have
\begin{eqnarray*}
r^N_+(x)&=&\sum_{(a,b,1)\in\calI}
\tka_1^{ab}(x)_{a,N}(1-x)_{b,N} +
\frac{1}2N^{-1}\ve^2_N\gamma(Nx,N),
\\
r^N_-(x)&=&\sum_{(a,b,-1)\in\calI}
\tka_{-1}^{ab}(x)_{a,N}(1-x)_{b,N} +
\frac{1}2N^{-1}\ve^2_N\gamma(Nx,N).
\end{eqnarray*}
Since $\gamma(x,N)$ is such that $|\gamma(Nx,N)N^{-2}-\tgamma^2\tsig
^2(x)|\to0$ uniformly in $x\in\{0,\frac{1}N,\ldots,1\}$, then given that
$N\ve^2_N\to1$, we have uniform convergence of $r^N_+\to\tr_+$ and
$r^N_+\to\tr_+$ to
\begin{eqnarray*}
\tr_+(x)&=&\sum_{(a,b,1)\in\calI}\tka_1^{ab}x^a(1-x)^b
+ \frac{1}2\tgamma ^2\tsig^2(x),
\\
\tr_-(x)&=&\sum_{(a,b,-1)\in\calI}\tka_{-1}^{ab}x^a(1-x)^b
+ \frac{1}2\tgamma^2\tsig^2(x).
\end{eqnarray*}
Let $ \om(x)=1-\frac{\tr_-(x)}{\tr_+(x)} $, so
\begin{eqnarray*}
\om(x)&=&\frac{\tr_+(x)-\tr_-(x)}{\tr_+(x)}=\frac{\tphi(x)}{\sum_{(a,b,1)\in\calI}
\tka_{1}^{ab}x^a(1-x)^b + ({1}/2)\tgamma^2\tsig
^2(x)},
\\
\frac{\om(x)}{1-\om(x)}&=&\frac{\tr_+(x)-\tr_-(x)}{\tr
_-(x)}=\frac{\tphi
(x)}{\sum_{(a,b,-1)\in\calI}\tka_{-1}^{ab}x^a(1-x)^b + ({1}/2)\tgamma
^2\tsig^2(x)}
\end{eqnarray*}
and (\ref{bdquasipot}) implies that $\imath_{x_i,x_2}(\tr_+,\tr
_-)=\int_{x_i}^{x_2}\ln (1-\om(x) )\,dx$ satisfies
\begin{eqnarray*}
&&-\int_{x_i}^{x_2} \frac{\tphi(x) \,dx}{\sum_{(a,b,-1)\in
\calI}\tka_{-1}^{ab}x^a(1-x)^b + ({1}/2)\tgamma^2\tsig^2(x)} \\
&&\qquad\le \imath
_{x_i,x_2}(\tr_+,\tr_-) \\
&&\qquad\le-\int_{x_i}^{x_2}
\frac{\tphi(x)
\,dx}{\sum_{(a,b,1)\in\calI}\tka_{1}^{ab}x^a(1-x)^b + ({1}/2)\tgamma^2\tsig^2(x)}.
\end{eqnarray*}
On the other hand by (\ref{potdiff2}) and the fact that $g'(y)=\tgamma
\tsig(g(y))$ we also have
\begin{eqnarray*}
I_{x_i,x_2}(\tphi,\tsig)&=&-2\int_{y_i}^{y_2}
\al(y)\,dy=-2\int_{y_i}^{y_2}\frac{\tphi(g(y)) \,dy}{\tgamma\tsig(g(y))}=-\int
_{x_i}^{x_2}\frac{\tphi(x) \,dx}{({1}/2)\tgamma^2\tsig^2(x)}\\
& \ge& \imath
_{x_i,x_2}(\tr_+,\tr_-).
\end{eqnarray*}
Hence if $N\ve_N^2\to1$, we get a comparison using quasipotentials for
$X_N$ and $\tX_\ve$ of the time-scales for transitions between stable
equilibria, as
\[
\ln\beta_\ve\approx\frac{1}{\ve_N^2}I_{x_i,x_2}(\tphi,\tsig)
\gtrsim N\imath_{x_i,x_2}(\tr_+,\tr_-) \approx\ln\beta_{\ve_N}.
\]
\upqed\end{pf}

If $\ve^2_N=N^{-1}$, transitions between stable equilibria are more
often due to finite-system-size effects than due to the effects of an
additional mechanism. This is understandable in light of the fact that
the diffusion $\tX_\ve$ is a limit of the rescaled process $X_N$ in
which the contribution of any subdiffusive noise disappears. As
remarked earlier, when $\ve^2_N=N^{-1}$, we could use this informally
prior to obtaining a diffusion limit $\tX_\ve$. If, for rates of
balanced reactions we write $\ka^{ab}_\zeta(N)=N^{1-(a+b)}\tka
^{ab}_\zeta=\ve^2_NN^{2-(a+b)}\tka^{ab}_\zeta$, then the diffusion
coefficient would become $\tila_\ve(x)=\ve^2_N (\sum_{(a,b,\zeta)\in
I^{\ba}}\tka^{ab}_{\zeta,\mu}x^a(1-x)^b+\tgamma^2\tsig^2(x) )$.
However, even this ``adjusted'' diffusion coefficient would not change
the conclusion of Theorem~\ref{corLD}, since the contribution of the
rates from biased reactions is still missing from the quasipotential of
$\tX_\ve$.

If $N^{-1}\ll\ve^2_N\ll1$, it is clear from Theorem~\ref{corLD} that
the noise of the splitting is the dominant factor in effecting the
transitions, while if $\ve^2_N\ll N^{-1}$, the noise from reactions
dominates, and both rates $\tr_+,\tr_-$ and the quasipotential
$\imath
_{x_i,x_2}$ are determined by the reaction system only.

\subsection{Example: bistable behavior from slow splitting}\label
{subsec:examplebist}
Here is an example of a simple reaction system that yields a limiting
system with a double-well potential:
%
\begin{eqnarray}
\label{dwreac1} A & \stackrel{\ka^{10}_{-1}} {\to} B,
\\
B & \stackrel{\ka^{01}_{1}} {\to} A, 
\\
A+B & \stackrel{\ka^{11}_{-1}} {\to} 2B, \label{dwreac3}
\\
2A+B & \stackrel{\ka^{21}_{1}} {\to} 3A .\label{dwreac4}
\end{eqnarray}
The trimolecular reaction \eqref{dwreac4} produces a term in the drift
which is cubic in $X_A$, which is needed in order to obtain the three
desired equilibria. With standard mass-action scaling $\ka^{ab}_\zeta
=N^{1-(a+b)} \tka^{ab}_\zeta$, the limit of $F_N(X_N(t))=\bE[X_N(t)] =
\bE[X_A(t)/N] \in[0,1]$ as $N \to\infty$ is
\begin{eqnarray}
\tphi(x)=\lim_{N\to\infty}F_N(x) = -
\tka^{10}_{-1} x +\tka ^{01}_{1}(1-x) -
\tka^{11}_{-1} x(1-x) + \tka^{21}_{1}
x^2(1-x),\nonumber\\
 \eqntext{x\in[0,1].}
 \end{eqnarray}
With the special choice of
$\tka^{10}_{-1}=\tka^{01}_{1}=1, \tka^{11}_{-1} = \frac{16}{3},
\tka
^{21}_{1} = \frac{32}{3}$ we have
%
\begin{equation}\qquad
\label{muode} \tphi(x) =\tfrac{1}{3} \bigl( 3-22x + 48 x^2 -32
x^3 \bigr)=-\tfrac
{32}{3} \bigl(x-\tfrac{1}4 \bigr)
\bigl(x-\tfrac{1}2 \bigr) \bigl(x-\tfrac{3}4 \bigr)
\end{equation}
with two stable points at $x_1=\frac{1}4$ and $x_3=\frac{3}4$ and one
unstable point at $x_2=\frac{1}2$ for the system, and thus $\Phi=-\int
\tphi$ is a double-well potential. Since $\tphi$ is antisymmetric about
the line $x=\frac{1}2$ the potential can be expressed as
\[
\Phi(x) = \tfrac{1}{6} (2x-1)^4 - \tfrac{1}{12}(2x-1)^2
+C,
\]
which is symmetric about the line $x=\frac{1}2$, 
and thus $\Phi$ has equally deep wells $\Phi(\frac{1}4)=\Phi(\frac{3}4)$.\vspace*{1pt}

This system bears resemblance to the so-called Schl\"ogl model \cite
{Sch}, which consists of four reactions $A+2X \rightleftharpoons3X, B
\rightleftharpoons X$, with the resulting drift for $X$ cubic. In \cite
{VQ} the authors formulate the Kolmogorov forward equation (chemical
master equation) to analyze the stochastic model for this reaction system.

For this example we take the simplest splitting/resampling mechanism
[Example (Bern) in Section~\ref{subsec:splitmodel}] in which at each
split an error in the molecular count of $A$ from the parent to the
daughter cell is at most $1$. Its rate is $\gamma(x,N)=\gamma
(N)x/N(1-x/N)$ and its probabilities are $p_{x,x+1}=p_{x,x-1}=1/2$ for
$x\neq0,N$, and $p_{0,0}=p_{N,N}=1$. Note that here the factor $\gamma
(N)$ will depend on~$N$, but is state independent. This mechanism can
also be represented in terms of reactions as
%
\begin{equation}
\label{wfsplit} A+B\stackrel{{N^{-2}}\gamma(N)} {\longrightarrow}
2A,\qquad  A+B \stackrel{{N^{-2}}\gamma(N)} {\longrightarrow} 2B.
\end{equation}
We stress that this representation (\ref{wfsplit}) of the resampling in
terms of reactions is done merely to illustrate the mechanism in a
similar way to the reactions, and is not to be confused with an actual
set of biological reactions as in (\ref{dwreac1})--(\ref{dwreac4}). This
can be done in the particular case of Moran-type resampling, since the
rates of this mechanism depend on the product of both the count of $A$
and of $B$. This is a consequence of the fact that each resampling
event picks either one molecule of $A$ or one molecule of $B$ with
probabilities relative to their proportions in the cell, and replaces
it in the daughter cell with a random choice of either $A$ or $B$ with
equal probability.

As shown in Section~\ref{subsec:splitmodel}, if we choose the splitting
parameter to be $\gamma(N)=\frac{1}2\ve^2N^2$ for some small constant
$\ve
^2>0$, then $\gamma(x,N)$ satisfies all the conditions of assumption
(${7}^*$), and the limiting process
$\tX_\ve$ satisfies the stochastic differential equation with drift
(\ref{muode}) and diffusion coefficient $\ve^2\frac{1}2 x(1-x)$
%
\begin{eqnarray}
\label{example:bistsde} d\tX_\ve(t) &=&\tfrac{1}{3} \bigl( 3-22
\tX_\ve(t) + 48 \tX_\ve ^2(t) -32 \tX
_\ve(t)^3 \bigr)\,dt
\nonumber
\\[-8pt]
\\[-8pt]
\nonumber
&&{}+ \ve\sqrt{\tX_\ve(t)
\bigl(1-\tX_\ve(t)\bigr)}\,dB(t).
\end{eqnarray}
To find the value of the quasipotential for this problem we find the
transformation of the potential via $\al(y)=\tphi(g(y))/\tsig(g(y))$,
where $g$ is the solution to $g'(y)=\tsig(g(y))=\sqrt{g(y)(1-g(y))}$,
given explicitly by
\[
g(y) = \cos^2 \biggl(\frac{1}{2}\biggl(y-
\frac{\pi}{2}\biggr) \biggr) = \cos^2 \biggl(\frac{ y}{2} -
\frac{\pi}{4} \biggr), \qquad y\in \biggl[-\frac
{\pi}{2},\frac{\pi}{2}
\biggr].
\]
We chose the constant of integration so that $g(0)=\frac{1}2$, and
$g(-y)=1-g(y)$. The inverse of $g$ is given by
\[
h(x) = g^{-1}(x) = 2\arctan \biggl(-\sqrt{\frac{1}{x}-1}
\biggr)+\frac
{\pi
}{2},\qquad  x\in[0,1];
\]
hence, the transformed equilibrium points $y_i = h(x_i)$ are
\[
y_1 = 2 \arctan(-\sqrt{3})+\frac{\pi}{2 }=-\frac{\pi}{6 },\qquad
y_3 = 2 \arctan \biggl(-\sqrt{\frac{1}3} \biggr)+
\frac{\pi}{2 } = \frac{\pi}{6}
\]
and
\[
y_2 = 2 \arctan(-1)+\frac{\pi}{2 }= 0.
\]
Note as well that the wells of the transformed potential are of equal
depth, which follows from the fact that
$\al$ is an odd function
\[
\al(-y) = \frac{\tphi(g(-y))}{\tsig(g(-y))} = \frac{\tphi
(1-g(y))}{\tsig(1-g(y))} =\frac{-\tphi(g(y))}{\tsig(g(y))} = -
\al(y),
\]
and thus $A = -\int\al(y) \,dy$ is an even function. Since $y_1 = -y_3$,
$y_2=0$, and $A(y_1) = A(y_3)=0$, we have $A(y_2)=\int_0^{\pi/6}\al
(y)\,dy$ with a rather complicated expression
\begin{eqnarray*}
A(y_2)&=&\int_0^{\pi/6}
\frac{\tphi(\cos^2 (({y}/{2}) - ({\pi}/{4}) ))}{\tsig(\cos^2 (({y}/{2}) - ({\pi}/{4})
))}\,dy=\frac{1}{3}\int_{1/2}^{3/4}
\frac{3-22x + 48x^2 -32 x^3}{x(1-x)}\,dx\\
& \stackrel {\cdot} {=}& 0.0913.
\end{eqnarray*}

By Proposition~\ref{LD1} on a time-scale of $O(e^{-\ve^{-2}2A(y_2)})$,
the process exists a neighborhood of the stable equilibria $x_1=\frac{1}4, x_3=\frac{3}4$. Symmetry of $A$ around $y_2=0$ implies that we are in
the bistable case (ii) of Proposition~\ref{LD2}, and the occupation
measure of the process $\tX_\ve$ converges to the occupation measure of
a two-state Markov chain, which transitions between states $\{\frac{1}4,\frac{3}4\}$ with equal rates. Figure~\ref{fig:bistable} shows an
exact simulation of a sample path of the rescaled process
$X_N=N^{-1}X_A$ with choice of parameters $N=1500$, $\ve^2=0.02$; since
$\ve^2 \gg1/N$, we expect the $\ep$-perturbation of the limiting
diffusion to be driving the switching. Indeed, the process appears to
be spending most of its time in neighborhoods $B_{0.1}(x_1)\cup
B_{0.1}(x_3)$, switching between them at the approximate rate
$R=e^{-\ve
^{-2} 2A(y_2)}\stackrel{\cdot}{=} 0.0001083$.


\begin{figure}

\includegraphics{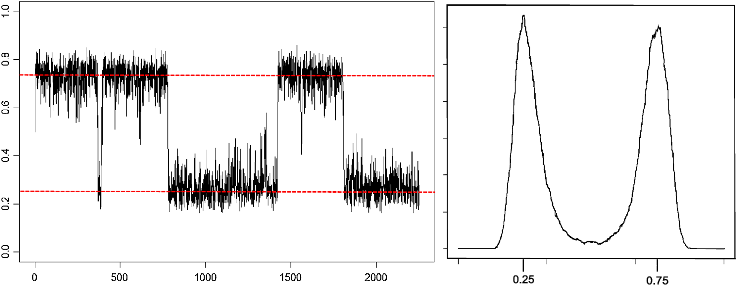}

\caption{Sample path $X_N(t)$ (left: $x$-axis${}=t$,
$y$-axis${}=X_N(t)=N^{-1}X(t)$) and its occupation density (right:
$x$-axis${}={}$state space of $X_N\subset[0,1]$, $y$-axis${}={}$proportion of time
$X_N$ spends in each state by time $t=2500$) from the system (\protect\ref
{dwreac1})--(\protect\ref{dwreac4}) with birth--death splitting, under standard
mass-action scaling for reactions and $\gamma(\ve, N)=\frac{1}2\ve^2N^2$
(parameters $N=1500$, $\ep^2=0.02$). Dashed red lines indicate
quasi-equilibria at $1/4$ and $3/4$.}
\label{fig:bistable}
\end{figure}

\begin{figure}

\includegraphics{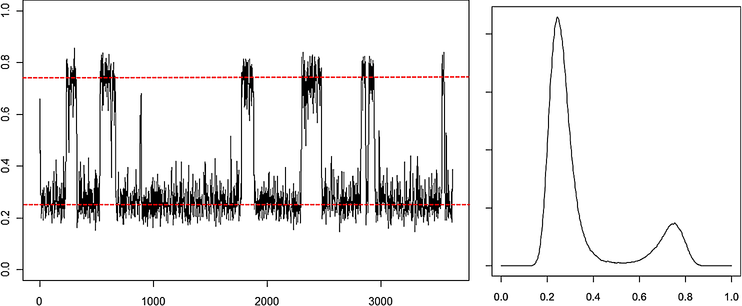}

\caption{Sample path $X_N(t)$ [left: $x$-axis${}=t$, $y$-axis${}=X_N(t)$] and
its occupation density (right: $x$-axis${}={}$state space of $X_N\subset
[0,1]$, $y$-axis${}={}$proportion of time $X_N$ spends in each state by time
$t=4000$) for the system (\protect\ref{dwreac1})--(\protect\ref
{dwreac4}) with
birth--death splitting, under standard mass-action scaling for reactions
and $\gamma(\ve, N)=\frac{1}2\ve^2N^2$ (parameters $N=500$, $\ep^2=2
\times10^{-4}$). Dashed red lines indicate quasi-equilibria at $1/4$
and $3/4$ as above.} \label{fig:bistable2}
\end{figure}

If we take $\ve^2\ll1/N$, then transitions between stable equilibria
are based only on the scaled rates for the reaction system \eqref
{dwreac1}--\eqref{dwreac4},
\[
\tr_+(x)=1-x+ \tfrac{32}{3}x^2(1-x) \quad\mbox{and}\quad \tr_-(x) = x +
\tfrac
{16}{3}x(1-x).
\]
By Proposition~\ref{LD3} the values of the quasipotential for the birth--death Markov process are
\[
\imath_{x_1,x_2}=\int_{x_1}^{x_2}\ln \biggl(
\frac{\tr_-(x)}{\tr
_+(x)} \biggr)\,dx = \int_{{1}/4}^{{1}/2} \ln
\biggl(\frac
{x+
({16}/{3})x(1-x)}{1-x+ ({32}/{3})x^2(1-x)} \biggr)\,dx \stackrel{\cdot} {=} 0.006713
\]
and
\[
\imath_{x_3,x_2}=\int_{x_3}^{x_2}\ln \biggl(
\frac{\tr_-(x)}{\tr
_+(x)} \biggr)\,dx\stackrel{\cdot} {=} 0.005534.
\]
Note that here the values for the quasipotential are no longer equal,
and the process will take longer to get out of the neighborhood of the
equilibrium $x_1=\frac{1}4$. Figure~\ref{fig:bistable2} shows a
simulation of a sample path of the rescaled process $X_N$ for $\gamma
(N)=\ep^2N$ with the choice of parameters $N=500$, but $\ve^2=2\times
10^{-4}$. In this case $1/N \gg\ep^2$ and we expect the transitions to
be due to noise from the reactions arising from finite-$N$ effects.
Based on the above calculation we expect the process to be switching
away from $B_{0.1}(x_1)$ at rate $R=e^{-N \imath_{x_1,x_2}} =
e^{-3.356629} \stackrel{\cdot}{=}0.035$ and away from $B_{0.1}(x_3)$ at
a rate $R'=e^{-N \imath_{x_3,x_2}}= e^{-2.769957}\stackrel{\cdot
}{=}0.062$; indeed, the time spent near $x_1$ is appreciably larger
than the time spent near $x_3$. 

We make a particular note that the reaction system considered here is
very sensitive to the exact values given for the reaction constants; a
small change in these would preserve the double-well potential, but
would lead to nonequal depth of the two wells for the quasipotential,
and hence instead of a limiting bistable behavior would lead to a
limiting metastable behavior as in case (i) of Proposition~\ref{LD2}.
In the next section we discuss the conditions on the scaling of the
reaction and splitting/resampling which yield behavior that can also be
described as bistable, but where the underlying mechanism is
qualitatively different and the restrictions on the reaction system are
negligible.

\section{Bistable behavior from fast splitting}\label{sec:switching}

We next consider the case $\ve_A \approx\infty$, and assume that time
has been rescaled so that $\tilde{c}_{\sigma^2}= \lim_{N\to\infty
}c_{\sigma^2}(N) \in(0, \infty)$ and $\tilde{c}_\mu= \lim_{N\to
\infty
}c_\mu(N) \approx0$. This is a more unconventional scaling, in which
the noise (from balanced reactions and splitting) overwhelms the
contribution due to the drift (from biased reactions).

One way to model this with a diffusion would be to introduce a
time-scale separation with an additional small parameter $\ve$ in the
scaling of all reactions rather than in the rate of splitting. Suppose
all reaction constants scale as $\ka_\zeta^{ab}=\ve\tka_\zeta
^{ab}N^{1-(a+b)}$, while the rate of splitting $\gamma(x,N)$ satisfies
assumption (${7}^*$). For any fixed $\ve>0$, the resulting limit of
the rescaled process $X_N$ would be
%
\begin{equation}
d\tX^\ve(t) = \ve\tphi\bigl(\tX^\ve(t)\bigr)\,dt+
\tgamma\tsig\bigl(\tX^\ve (t)\bigr)\,dB(t),\qquad \tX^\ve\in[0,1],
\end{equation}
where $B$ is a standard Brownian motion, and we have the case of a
diffusion with a small drift. Note that although $\tsig^2(0)=\tsig
^2(1)=0$ [by assumption~(7)], the boundaries $\{0,1\}$ are not absorbing,
since there is at least one biased reaction that allows escape from
either boundary $\tphi(0)>0, \tphi(1)<0$ [by assumption~(2)]. Other than
at the boundaries the contribution of the drift is essentially
negligible, and $\tX^\ve$ is approximately a martingale.
Most attempts to escape a boundary are followed by the return to the
same boundary point; only some end up at the opposite one. In the limit
as $\ve\to0$, the rate of escapes from the boundaries for $\tX^\ve$
vanishes, and there is no switching.\looseness=1

However, under the right conditions, the limit of the original rescaled
process will spend almost all of its time at one boundary or the other,
switching between the two on a reasonable time-scale, creating again a
bistable system. How the effect of the attempts to escape the boundary
appears in the limit depends on the rate of the attempts, and the time
spent between the boundaries.
In order to make a precise statement we need to examine the behavior of
the rescaled process $X_N$ directly and specify a general set of
conditions for a Markov jump process to exhibit this type of switching behavior.

\subsection{Stochastic switching}\label{subsec:switch}

The unscaled process $X_A$ is a Markov chain on $\{0,1,\ldots,N\}$ with
transitions that are due to the reactions $(a,b,\zeta)\in\calI$, as
well as the splitting mechanism with distribution $p_{x,y}, {(x,y)\in\{
0,1,\ldots,N\}^2}$. The rates of these transitions from $X_A=x$ are
equal to $\sum_{(a,b,\zeta)\in\calI}\la_\zeta^{ab}(x)=\sum_{(a,b,\zeta
)\in\calI}\ka_\zeta^{ab}(N)(x)_a(N-x)_b$\vspace*{3pt} 
from the reaction system and $\gamma(x,N)$ from the splitting, respectively.
We denote the total combined rate of $X_A$ from $i\in\{0,\ldots, n\}$ to
$j\in\{0, \ldots,n\}$ by
\[
r_{ij}=\sum_{(a,b,j-i)\in\calI}\ka_{j-i}^{ab}(N)i_a(N-i)_b+
\gamma (i,N) p_{i,j}.
\]
Transitions due to splitting can have jumps whose size can in principle
be as large as $N-1$ (such as those of the Wright--Fisher splitting
process example in Section~\ref{subsec:splitmodel}), although with very
small probability. However, a splitting mechanism is absorbing at $\{
0,N\}$, $p_{0,0}=p_{N,N}=1$, and the rates of jumps off the boundaries
$x\in\{0,N\}$ are created by\vadjust{\goodbreak} reactions using only molecules of $B$ (for
$x=0$), or using only molecules of $A$ (for $x=N$), with rates
\[
r_{0j}= \sum_{(0,b,j)\in\calI}
\ka_j^{0j}(N) (N)_b, \qquad r_{Nj}= \sum
_{(a,0,N-j)\in\calI}\ka_{N-j}^{a0}(N)
(N)_a.
\]
By assumption~(2) in Section~\ref{subsec:reacmodel}, there exist
$j,j'\in
\{1,\ldots, N-1\}$ such that $r_{0j},r_{Nj'}\neq0$. The leading powers
of $N$, $\max_{(0,b,j)\in\calI}\{b\}>0$ and\break  $\max_{(a,0,N-j)\in
\calI}\{
a\}>0$, respectively, will determine the rate at which attempts to
counteract absorption at the boundaries happen, and in particular, this
implies that $r_{0j}$, $r_{Nj}\to\infty$ as $N\to\infty$ [allowing for
upcoming condition~(\ref{swcond1})].

Define an \textit{excursion} of $X_A$ to be any segment $X_A(t), t \in
[t_1,t_2)$ such that $X_A(t_1-),X_A(t_2) \in\{0,N\}$ and $X_A(t) \notin\{0,N\}$ for $t \in[t_1,t_2)$. Call an excursion on $[t_1,t_2)$
``successful'' if $X_A(t_1-) \neq X_A(t_2)$, and ``unsuccessful''
otherwise. For $0\leq j \leq N$, let $\tau_j:=\inf\{t\ge0\dvtx X_A(t)=j\}$
be the first hitting time of state $j$, and let $\tau_{0,N}= \tau_0
\land\tau_N$ denote the first hitting time of either boundary state. Let
\begin{eqnarray*}
e_{j0} &=&\bE\bigl[\tau_{0,N}| X_A(0)=j,
X_A(\tau_{0,N})=0\bigr],\\
e_{jN}&=&\bE \bigl[\tau
_{0,N}| X_A(0)=j, X_A(\tau_{0,N})=N
\bigr]
\end{eqnarray*}
be the expected hitting time of the two boundaries from $j$ and
$\pi_{jN}$ be the probability that an excursion from $j$ hits the $N$
boundary first
\[
\pi_{jN}=\bP\bigl[X_A( \tau_{0,N})=N|
X_A(0)=j\bigr],
\]
and thus $\pi_{j0}=1-\pi_{jN}$ is the probability it first hits the $0$
boundary.
The values of $\{e_{j\cdot},\pi_{jN}\}_{j\in\{1,\ldots,N-1\}}$ can be
determined by setting up and solving the appropriate linear functionals
of the generator for the Markov process $X_A$; explicit expressions,
however, may be hard to come by for general processes.

Excursions of $X_A$ depend on transitions from both reactions and the
splitting mechanism. However, if the noise overwhelms the drift, then
at each step in the interior transition rates are dominated by those
from the balanced reactions and the splitting mechanism. In particular,
this will imply that in the interior $X_A$ behaves approximately like a
martingale, and will allow us to approximate the probability of
switching from one boundary point to the other in terms of the relative
rates of biased reactions versus balanced reactions and splitting. We
will estimate $e_{j0},e_{jN}, \pi_{jN}$ in an example to come, and
exhibit more explicit conditions than the ones below in the case when
the reactions and splitting yield a birth--death process for $X_A$.

We first state general conditions under which the rescaled process
$X_N=X_A/N$ can be approximated by a simple Markov jump process.
Suppose that there exists two scaling parameters: the order of
magnitude of the rate of reactions on the boundary $\omega_N\to\infty
$, and a time scaling parameter $\beta_N>0$ for the rescaled process
$X_N$, such that
%
\begin{eqnarray}
\label{swcond1}&& \frac{1}{\omega_N}\sum_jr_{0j}
\to\tr_{+},\qquad \frac{1}{\omega
_N}\sum_jr_{Nj}
\to\tr_{-},
\\
&&\beta_N\sum_jr_{0j}
\pi_{jN} \to\tr_{01},\qquad \beta_N\sum
_jr_{Nj} \pi _{j0}\to
\tr_{10}, \label{swcond2}
\\
\label{swcond3}
&&\frac{1}{\beta_N\omega_N}\sum_jr_{0j}
e_{jN} ,\qquad \sum_j r_{0j}
e_{j0} ,
\nonumber
\\[-8pt]
\\[-8pt]
\nonumber
 &&\frac{1}{\beta_N\omega_N}\sum_jr_{Nj}
e_{j0},\qquad \sum_j r_{Nj}
e_{jN} \to0,
\end{eqnarray}
with $\tr_{+}, \tr_{-}, \tr_{01}, \tr_{10}\in(0,\infty)$.
Since $r_{0j}, r_{Nj}\to\infty$, there is no need to change the
time-scale for the process. These conditions imply that there are many
excursions in any finite time interval $[0,t]$, only a small fraction
of which are successful, and during which the total time spent is very
small. Consequently, the rescaled process will spend most of its time
on one boundary until the first time a successful excursion takes it to
the other boundary.
Let $\tT_N^0=\inf\{t\ge0\dvtx X_A(t)=0\}$, and
\begin{eqnarray}
T_N^i=\inf\bigl\{t>\tT_N^{i-1}
\dvtx X_A(t)=N\bigr\},\qquad \tT_N^i= \inf\bigl\{t
>T_N^i\dvtx X_A(t)=0\bigr\}, \nonumber\\
\eqntext{i=1,2\ldots}
\end{eqnarray}
be a sequence of times at which $X_A$ first reaches a boundary
different from the one where it was most recently.
Also, define the measure-valued process $(\nu^N_t)_{t\ge0}$ for some
$\rho_N>0$ such that $\frac{\rho_N}{\beta_N}\to0$ by
\[
\nu^N_t(f)=\frac{1}{\rho_N}\int_{\beta_Nt}^{\beta_Nt+\rho_N}
f\bigl(X_N(s)\bigr)\,ds
\]
for any (bounded continuous) function $f$ on $\{0,\frac{1}N,\ldots,1\}$;
this $(\nu_t^N)$ approximates the law of the location of the rescaled
process $X_N(t)=X_A(t)/N$ on a short time interval of length $\rho_N$.

\begin{proposition} \label{genthm}
If $X_A$ satisfies \eqref{swcond1}--\eqref{swcond3}, then
\begin{eqnarray}
\lim_{N\to\infty}\bP \bigl[T_N^i-
\tT_N^{i-1}>t\beta_N \bigr]=e^{-\tr
_{01}t},\qquad
\lim_{N\to\infty}\bP \bigl[T_N^{i+1}-
\tT_N^{i}>t\beta_N \bigr]=e^{-\tr
_{10}t}\nonumber\\
\eqntext{\forall t>0,}
\end{eqnarray}
and we have convergence in law on the space of cadlag paths (with the
Skorokhod topology) $(\nu^N_t)_{t\ge0}\Rightarrow(\nu_t)_{t\ge0}$ to
a jump process
\[
\nu_t = \cases{ %
\delta_{0}, & \quad $T_{2i} \leq t < T_{2i+1},$
\vspace*{2pt}\cr
\delta_{1}, & \quad $T_{2i+1} \leq t < T_{2i+2},$}
\qquad i=0,1,2,\ldots,
\]
where $\{T_{2i+1}-T_{2i}\}_{i\ge0}$ and $\{T_{2i+2}-T_{2i+1}\}_{i\ge
0}$ are two independent sequences of i.i.d. exponential variables with
rates $\tr_{01}$ and $\tr_{10}$, respectively.
\end{proposition}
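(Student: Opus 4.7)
The plan is to exploit the renewal structure of $X_A$ at its visits to the boundary set $\{0, N\}$: starting at a boundary, the process either returns (an ``unsuccessful'' excursion) or reaches the opposite boundary (a ``successful'' one), and the strong Markov property makes consecutive excursions i.i.d. Conditions (\ref{swcond1})--(\ref{swcond3}) are tuned so that excursion attempts are very frequent (rate $\omega_N\to\infty$), successful ones very rare (of rate $O(\be_N^{-1})$), and the total time spent off the boundary is negligible on the $\be_N$-scale. Consequently, in the limit, the process is concentrated on $\{0, 1\}$ and switches between them at exponential times.

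The first step is to establish $T^1_N/\be_N \Rightarrow \mathrm{Exp}(\tr_{01})$ starting from $X_A=0$. The holding time at $0$ is exponential with rate $\sum_j r_{0j}\sim\omega_N\tr_{+}$; upon leaving, the probability of reaching $N$ before returning to $0$ is $p^N_0 := \sum_j r_{0j}\pi_{jN}/\sum_j r_{0j}$, and by (\ref{swcond1})--(\ref{swcond2}) one has $p^N_0 \sum_j r_{0j} = \sum_j r_{0j}\pi_{jN} \sim \be_N^{-1}\tr_{01}$. The number of excursion cycles before the first success is $\mathrm{Geom}(p^N_0)$, and if we discard the excursion durations, the resulting sum of geometrically many exponential holding times is exactly $\mathrm{Exp}(p^N_0\sum_j r_{0j})\sim\mathrm{Exp}(\be_N^{-1}\tr_{01})$; scaling by $\be_N$ gives the target limit. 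Condition (\ref{swcond3}) is then used to show that the excursion durations themselves do not alter this picture: via the strong Markov property at each return to $\{0,N\}$, the expected total excursion time in $[0, t\be_N]$ is controlled by $\sum_j r_{0j}e_{j0}$ and by $\sum_j r_{0j}\pi_{jN}e_{jN}$, which are forced to zero by the two $r_{0j}$ conditions in (\ref{swcond3}). A symmetric analysis starting from $N$ using the conditions on $r_{Nj}$ yields $(\tT^1_N-T^1_N)/\be_N \Rightarrow \mathrm{Exp}(\tr_{10})$, and iterating the strong Markov property at the successive $T^i_N$ and $\tT^i_N$ produces the alternating i.i.d.\ exponential sequences as claimed.

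For the convergence of $(\nu^N_t)_{t\ge 0}$, combine the negligibility of excursion times with the joint convergence of the switching sequences: on each window $[\be_N t, \be_N t + \rho_N]$ the rescaled process $X_N$ takes values in $\{0, 1\}$ except on a set of time-measure $o(\rho_N)$ with probability tending to one, so $\nu^N_t(f)$ tends to $f(0)$ or $f(1)$ according to whether the limiting alternating process is at $0$ or $1$ at time $t$. The Skorokhod convergence of $(\nu^N_\cdot)$ to $(\nu_\cdot)$ then follows by the continuous mapping theorem from the joint convergence of the switching-time vectors together with tightness (which is automatic on the compact space of probability measures on $[0,1]$). The main obstacle is extracting $\sum_j r_{0j}\pi_{jN}e_{jN}\to 0$ from the $(\be_N\omega_N)$-normalized condition in (\ref{swcond3}): the naive bound $\pi_{jN}\le 1$ is too weak, so one must exploit that $\pi_{jN}$ is typically of order $(\be_N\omega_N)^{-1}$ (as forced on average by (\ref{swcond1})--(\ref{swcond2})) through a linear-functional argument on the generator, and then upgrade the first-moment bound to uniform-in-probability control over $[0, t\be_N]$ via a maximal inequality exploiting the renewal structure at boundary visits, so that no ``rare but very long'' excursion dominates the cumulative off-boundary time.
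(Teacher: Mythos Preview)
Your approach coincides with the paper's. Both decompose the inter-switch time as (cumulative exponential holding time at the boundary) plus (time spent on excursions); the paper writes this as $T_N^{i+1}-\tT_N^i = W_N^i + U_N^i + S_N^i$, with $W_N^i$ the total holding time at $0$ until the first successful excursion (exactly $\mathrm{Exp}(\sum_j r_{0j}\pi_{jN})$, matching your thinning argument), $U_N^i$ the total duration of unsuccessful excursions, and $S_N^i$ the duration of the single successful one; it then shows $\bE[U_N^i/\be_N]\to 0$ and $\bE[S_N^i/\be_N]\to 0$ and concludes via Markov's inequality. The $\nu^N$ claim is declared immediate from $\bE[U_N^i+S_N^i]\to 0$.

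One point worth flagging: the obstacle you isolate at the end is real, and the paper's written proof in fact glosses over it. The paper asserts $\bE[S_N^i]=\sum_j \frac{r_{0j}}{\sum_{j'}r_{0j'}}\,e_{jN}$, but the first-jump law of the \emph{successful} excursion is the conditional one, so the correct formula has weights $\frac{r_{0j}\pi_{jN}}{\sum_{j'}r_{0j'}\pi_{j'N}}$, giving $\bE[S_N^i/\be_N]\sim \tr_{01}^{-1}\sum_j r_{0j}\pi_{jN}e_{jN}$ --- exactly the quantity you say must be forced to zero. Likewise the Wald bound $\bE[U_N^i+S_N^i]=\bE[s(i)]\,\bE[D_1]$ runs into $\sum_j r_{0j}e_j=\sum_j r_{0j}(\pi_{j0}e_{j0}+\pi_{jN}e_{jN})$, with the second piece again being your obstacle. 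In the only case the paper actually verifies (the birth-death chain of Proposition~\ref{specthm}) there is a single $j$ with $r_{0j}>0$, so the conditional and unconditional weights agree and the issue evaporates; in that setting your outline and the paper's proof both go through cleanly. For the general statement, your intuition that ``$\pi_{jN}$ is typically of order $(\be_N\omega_N)^{-1}$'' is the right heuristic but does not follow from (\ref{swcond1})--(\ref{swcond3}) alone; a clean fix is to add as an explicit hypothesis either $\sum_j r_{0j}\pi_{jN}e_{jN}\to 0$ (and its mirror at $N$) or a uniform bound on $e_{jN}$ over $\{j:r_{0j}>0\}$.
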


The rescaled process $X_N$ can therefore be approximated by a jump
Markov process $(J(t))_{t\ge0}$ on $\{0,1\}$ with transition rates
$\tr
_{01}$ from ${0\to1}$, and $\tr_{10}$ from $1 \to0$ in the following
sense: the occupation times of $X_N$ on $\{0,1\}$ converge to the
respective occupation times of $J$, and the times of successful
excursions of $X_N$ from $0\to1$ and from $1\to0$ converge to the
respective transitions of $J$. We cannot expect a stronger kind of
convergence than stated, since, for example, convergence in law of
$X_N$ to $J$ in the Skorokhod topology is precluded by the fact that
for arbitrarily large $N$, there remain unsuccessful excursions of
$X_N$ that stray from their originating boundary by a distance which is
bounded away from $0$.

A different set of conditions from those in (\ref{swcond3}) for the
length of excursions away from the boundaries, where in the limit we
get four nonzero limiting constants $\tilde{e}_{01},\tilde
{e}_{00},\tilde{e}_{10},\tilde{e}_{11}$, would imply convergence to a
limiting process which spends a nontrivial fraction of time away from
the boundary. The limiting process would behave similar to a
diffusion with ``sticky'' boundaries; see \cite{KT}, Section~15.8C.

\begin{pf*}{Proof of Proposition 4.1}
For each $i\ge0$, define a sequence of times after $\tT_N^i$ at which
excursions from $0$ start $\tsig_N^{i,i'}$ and end $\ttau_N^{i,i'}$, by
letting $\ttau_N^{i,0}=\tT_N^i$, and for $i'=1,2,\ldots$
\begin{eqnarray*}
\tsig_N^{i,i'}&=&\inf\bigl\{\ttau_N^{i,i'-1}<t
\dvtx X_A(t)\neq0, X_A(t-)=0\bigr\} ,\\
\ttau_N^{i,i'}&=&\inf\bigl\{\ttau_N^{i,i'-1}<t
\dvtx X_A(t)=0, X_A(t-)\neq0\bigr\}
\end{eqnarray*}
and let $s(i)=\inf\{i'\ge1\dvtx \ttau_N^{i,i'}>T_N^{i+1}\}$ be the index
of the first excursion from $0$ that is successful, hence $\ttau
_N^{i,s(i)}=\tT^{i+1}_N$. Note that $X_A(t)=0,\forall t\in[\ttau
_N^{i,i'-1},\tsig_N^{i,i'})$ and that $ \sum_{i'\le s(i)}(\tsig
_N^{i,i'}-\ttau_N^{i,i'-1})$ is the time spent at $0$ between
successful excursions, while $X_A\neq0$ for $t\in[\tsig
_N^{i,i'},\ttau
_N^{i,i'})$, and thus $ \sum_{i'<s(i)}(\ttau_N^{i,i'}-\tsig_N^{i,i'})$
is the time spent on unsuccessful excursions.

Consider\vspace*{1pt} the time interval $[\tT_N^i,T_N^{i+1}]-\bigcup_{i'<s(i)}[\tsig
_N^{i,i'},\ttau_N^{i,i'})$ from which subintervals for unsuccessful
excursions are excised. Excursions from $0$ are started at overall rate
$\sum_{j'} r_{0j'}$, and since excursions whose first step is to $j$
are successful with probability $\pi_{jN}$, successful excursions are
started at rate $\sum_j r_{0j}\pi_{jN}$. So
\[
W_N^i:=\tsig_N^{i,s(i)}-
\tT_N^i-\sum_{i'<s(i)}\bigl(
\ttau_N^{i,i'}-\tsig _N^{i,i'}\bigr)\sim
\operatorname{exponential} \biggl(\sum_j r_{0j}
\pi_{jN} \biggr)
\]
and
\[
s(i)\sim\operatorname{geometric} \biggl(\frac{\sum_j r_{0j}\pi_{jN}}{\sum_j
r_{0j}} \biggr).
\]
Also, for any $i'<s(i)$, the unsuccessful excursion times $\ttau
_N^{i,i'}-\tsig_N^{i,i'}$ are independent and identically distributed with
\[
\bE\bigl[\ttau_N^{i,i'}-\tsig_N^{i,i'}
\bigr]=\sum_{j}\frac{r_{0j}}{\sum_{j'}r_{0j'}}\bE\bigl[
\tau_{0,N}|X_A(0)=j, X_A(\tau_{0,N})=0
\bigr],
\]
while $T_N^{i+1}-\tsig_N^{i,s(i)}$ is a subinterval for a successful
excursion with
\[
\bE\bigl[T_N^{i+1}-\tsig_N^{i,s(i)}
\bigr]=\sum_{j}\frac{r_{0j}}{\sum_{j'}r_{0j'}}\bE\bigl[
\tau_{0,N}|X_A(0)=j, X_A(\tau_{0,N})=N
\bigr].
\]

Let
\[
U_N^i:=\sum_{i'<s(i)}\bigl(
\ttau_N^{i,i'}-\tsig_N^{i,i'}\bigr)\quad \mbox
{and} \quad S_N^i:=T_N^{i+1}-
\tsig_N^{i,s(i)},
\]
so that $ T_N^{i+1}-\tT_N^i=W_N^i+U_N^i+S_N^i$. Assumption (\ref
{swcond2}) implies $W_N^i/\beta_N\Rightarrow \operatorname{exponential}(\tr_{01}$)
as $N\to\infty$.
We next show convergence for both $U_N^i\mathop{\to} 0$ and
$S_N^i\mathop{\to}0$ in probability as $N\to\infty$, which will imply
that $(T_N^{i+1}-\tT_N^i)/\beta_N \Rightarrow \operatorname{exponential}(\tr_{01}$).

We first note that
\begin{eqnarray*}
\bE\bigl[S_N^i\bigr]&=&\bE\bigl[T_N^{i+1}-
\tsig_N^{i,s(i)}\bigr]= \sum_{j}
\frac
{r_{0j}}{\sum_{j'}r_{0j'}}e_{jN}\\
&=&\frac{1}{\sum_{j'}(r_{0j'}/\omega_N)}\frac
{1}{\beta
_N\omega_N}\sum
_{j}r_{0j} e_{jN} \cdot
\beta_N;
\end{eqnarray*}
therefore, $\bE[S_N^i/\beta_N]\to0$, since the first fraction
converges to $1/\tr_{+}$, and the second to $0$, by (\ref{swcond1}) and
(\ref{swcond3}), respectively.
Similarly, for each unsuccessful excursion $1\le i'< s(i)$
\[
\bE\bigl[\ttau_N^{i,i'}-\tsig_N^{i,i'}
\bigr]= \frac{1}{\omega_N}\frac
{1}{\sum_{j'}(r_{0j'}/\omega_N)}\sum_{j}r_{0j}e_{j0},
\]
and since $s(i)$ is geometric,
\[
\bE\bigl[s(i)\bigr]=\frac{\sum_{j} r_{0j}}{\sum_{j} r_{0j}\pi_{jN}}={\omega _N}
\frac{\sum_{j} r_{0j}/\omega_N}{\sum_{j} r_{0j}\pi_{jN}}.
\]
We have
\[
\bE\bigl[U_N^i\bigr]= \bE \biggl[\sum
_{i'<s(i)}\bigl(\ttau_N^{i,i'}-\tsig
_N^{i,i'}\bigr) \biggr]\le\bE\bigl[s(i)\bigr] \bE \bigl[
\ttau_N^{i,i'}-\tsig_N^{i,i'} \bigr]=
\frac
{\sum_{j}r_{0j}e_{j0}}{\beta_N\sum_{j} r_{0j}\pi_{jN}} \cdot\beta_N
\]
and so $\bE[U_N^i/\beta_N]\to0$, since by (\ref{swcond2}) the
denominator converges to $\tr_{01}$, and by~(\ref{swcond3}) the
numerator goes to $0$.
Hence for any $\delta>0$ we have $\bP[S_N^i>\delta]\le\frac{\bE
[S_N^i]}{\delta}\to0$ and $\bP[U_N^i>\delta]\le\frac{\bE
[U_N^i]}{\delta
}\to0$.

A completely analogous proof shows that $(\tT_N^i-T_N^{i})\beta
_N\Rightarrow \operatorname{exponential}(\tr_{10}$), and the claim about the
probability measure $\nu_t$ is immediate from the fact that $\bE
[U_N^i+S_N^i]\to0$.
\end{pf*}

To verify condition (\ref{swcond1}) one only needs to use the rates of
biased reactions on the boundary. For (\ref{swcond2}), note the fact
that if not for biased reactions, the process would be a martingale; if
the rates of the biased reactions are overpowered by those of the
balanced reactions and splitting [as quantified in \eqref{swcond2}],
then the process is approximately a martingale. Conditions in (\ref
{swcond3}) predominantly depend on how fast the rates of the balanced
reactions and splitting are, as they determine the length of excursions
of the process.

These conditions are the easiest to verify when the reactions as well
as splitting/resampling mechanism make only unit net changes at each
step, so that $X_A$ is a birth--death process with $r_{ij}=0$ if
$|i-j|>1$. In this case one can specify more precise conditions on the
rates $r_{ij}$ that will ensure that (\ref{swcond1})--(\ref{swcond3})
hold. We consider the case when the time is already rescaled, that is,
$\beta_N=1$, and the rate of reactions on the boundaries is $\omega
_N=N$. We use the following notation for birth and death rates:
\[
r_+(i):= r_{i(i+1)},\qquad r_-(i):= r_{i(i-1)},\qquad
\ve_N(i)= \frac
{r_-(i)}{r_+(i)}-1,
\]
with $\ep_N(i)$ quantifying the strength of the bias at state $i$ [we
stress its dependence on $N$ via transition rates $r_\pm(i)$].

\begin{proposition}\label{specthm}
If $X_A$ is a birth--death chain whose rates satisfy 
\begin{eqnarray}
\label{bdcond1} \frac{r_+(0)}{N}&\to&\tr_{+}\in(0,\infty),\qquad
\frac{r_-(N)}{N}\to \tr _{-}\in(0,\infty),
\\
\sum_{i=1}^{N-1}\bigl|\ve_N(i)\bigr|
&\to&0\quad\mbox{and}\label{bdcond2}
\\
 \sum_{i=1}^{N-1}
\frac{N-i}{r_+(i)} &\to&0,\qquad \sum_{i=1}^{N-1}
\frac{i}{r_-(i)} \to0,\label{bdcond3}
\end{eqnarray}
then conditions \eqref{swcond1}--\eqref{swcond3} hold with $\omega
_N=N$, $\beta_N=1$ and $\tr_{01}=\tr_+$, $\tr_{10} = \tr_-$.
\end{proposition}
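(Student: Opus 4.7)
The plan is to verify each of the three abstract conditions \eqref{swcond1}--\eqref{swcond3} in turn by exploiting the birth-death structure of $X_A$ together with the standard scale-function and Green's-function machinery for one-dimensional Markov chains. Because only nearest-neighbor transitions occur, all $r_{0j}$ with $j\ne 1$ and all $r_{Nj}$ with $j\ne N-1$ vanish, so every sum in \eqref{swcond1}--\eqref{swcond3} collapses to a quantity involving only $r_+(0)$, $r_-(N)$, the hitting probabilities $\pi_{1N},\pi_{N-1,0}$, and the conditional exit times $e_{1,\cdot},e_{N-1,\cdot}$.

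Condition \eqref{swcond1} is then immediate from \eqref{bdcond1}. For \eqref{swcond2}, I introduce the scale function
\[
s(j)=\sum_{k=0}^{j-1}\prod_{l=1}^{k}\rho_l,\qquad \rho_l=\frac{r_-(l)}{r_+(l)}=1+\ve_N(l),
\]
so that $\pi_{1N}=1/s(N)$. Hypothesis \eqref{bdcond2} forces $\max_l|\ve_N(l)|\to 0$ and $\sum_l\ve_N(l)^{2}\to 0$; expanding $\log(1+x)=x+O(x^{2})$ then gives $\bigl|\sum_{l=1}^{k}\log(1+\ve_N(l))\bigr|\to 0$ uniformly in $k\le N-1$, hence $\prod_{l=1}^{k}\rho_l\to 1$ uniformly, $s(N)/N\to 1$, and $N\pi_{1N}\to 1$. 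Combined with $r_+(0)/N\to\tr_+$ this yields $r_+(0)\pi_{1N}\to\tr_+$, identifying $\tr_{01}=\tr_+$; the same argument applied to the reversed chain $N-X_A$ handles $r_-(N)\pi_{N-1,0}\to\tr_-$.

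For \eqref{swcond3}, the four limits reduce to $e_{1N}\to 0$, $r_+(0)e_{10}\to 0$, $e_{N-1,0}\to 0$, and $r_-(N)e_{N-1,N}\to 0$. I use the Green's function decomposition
\[
\pi_{1y}e_{1y}=\bE_1\bigl[\tau_{0,N}\mathbf{1}\{X_{\tau_{0,N}}=y\}\bigr]=\sum_{k=1}^{N-1}T(1,k)\,\pi_{ky},\qquad y\in\{0,N\},
\]
where for a birth-death chain
\[
T(1,k)=\frac{s(1\wedge k)\,(s(N)-s(1\vee k))}{s(N)\,r_+(k)\,(s(k+1)-s(k))}
\]
is the expected occupation time at $k$ before exit. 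The uniform estimates from the previous step give $s(k+1)-s(k)=1+o(1)$ and $s(k)=k(1+o(1))$, so $T(1,k)=(1+o(1))(N-k)/(Nr_+(k))$ and $\pi_{kN}=(1+o(1))k/N$. Substituting and using $k(N-k)\le N(N-k)$,
\[
\pi_{1N}e_{1N}\le\frac{1+o(1)}{N^{2}}\sum_{k=1}^{N-1}\frac{k(N-k)}{r_+(k)}\le\frac{1+o(1)}{N}\sum_{k=1}^{N-1}\frac{N-k}{r_+(k)};
\]
dividing by $\pi_{1N}\sim 1/N$ and invoking the first sum in \eqref{bdcond3} gives $e_{1N}\to 0$. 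Similarly $\pi_{10}e_{10}=O\bigl(N^{-2}\sum(N-k)^{2}/r_+(k)\bigr)$ yields $r_+(0)e_{10}\to 0$, and the two bounds for $e_{N-1,0}$ and $e_{N-1,N}$ follow by the dual computation using the second sum in \eqref{bdcond3}.

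The principal obstacle is the uniform-in-$k$ control of both $s(k)$ and the one-step differences $s(k+1)-s(k)$ that enter the Green's function; this is exactly what makes the $\ell^{1}$ hypothesis \eqref{bdcond2} natural and indispensable, since only a summability bound on $\ve_N$ (rather than a pointwise or $\ell^{2}$ bound) controls the partial sums $\sum_{l\le k}\log\rho_l$ uniformly in $k\le N-1$.
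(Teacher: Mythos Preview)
Your proposal is correct and, for conditions \eqref{swcond1} and \eqref{swcond2}, proceeds exactly as the paper does: the scale-function computation $\pi_{1N}=1/s(N)$ together with the uniform bound $\prod_{l\le k}(1+\ve_N(l))=1+o(1)$ coming from $\sum|\ve_N|\to 0$ is precisely the content of the paper's Lemma~\ref{lemma1}.

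Where you genuinely diverge is in the treatment of \eqref{swcond3}. The paper never writes down a Green's function; instead it sets up the linear recursion for the \emph{unconditional} mean exit time $e_1=\bE_1[\tau_{0,N}]$, solves it explicitly as
\[
e_1=\Bigg(\sum_{k=1}^{N}\sum_{i=1}^{k-1}\frac{1}{r_+(i)}\prod_{j=i+1}^{k-1}(1+\ve_N(j))\Bigg)\Big/\Bigg(\sum_{k=1}^{N}\prod_{j=1}^{k-1}(1+\ve_N(j))\Bigg),
\]
and bounds this by $\frac{1+o(1)}{N}\sum_{i}(N-i)/r_+(i)$, obtaining $Ne_1\to 0$. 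It then recovers the conditional pieces via the trivial inequalities $e_{1N}\le e_1/\pi_{1N}$ and $Ne_{10}\le Ne_1/\pi_{10}$ (the latter using only that $\pi_{10}\to 1$). Your route instead uses the occupation-time identity $\pi_{1y}e_{1y}=\sum_k T(1,k)\pi_{ky}$ and the explicit birth-death Green's function to bound each conditional exit time directly; after inserting the uniform asymptotics for $s(k)$ and $s(k+1)-s(k)$ you land on the very same sum $\sum_k(N-k)/r_+(k)$. The paper's argument is more elementary (a single second-order recursion and two crude inequalities), while yours is cleaner once the Green's-function machinery is in hand and makes transparent why the bound for $e_{1N}$ and the bound for $r_+(0)e_{10}$ both reduce to the same hypothesis \eqref{bdcond3}. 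One point worth making explicit when you write this up: the factorization $\bE_1[L_k\mathbf{1}\{X_{\tau_{0,N}}=y\}]=T(1,k)\pi_{ky}$ is not a general Markov-chain identity; it holds here because, starting from $k$, the local time $L_k$ and the exit point are independent (a consequence of the nearest-neighbour structure via the last-exit decomposition), and a sentence to that effect would close the only step that is not entirely routine.
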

Analogous to the general case, (\ref{bdcond1}) depends only on the
rates of biased reactions on the boundaries, (\ref{bdcond2}) reflects
the fact that off of the boundaries the drift of the biased reactions
is much weaker than the noise of the balanced reactions and splitting
and (\ref{bdcond3}) is an estimate on the speed of the balanced
reactions and splitting.

\begin{pf*}{Proof of Proposition \ref{specthm}} (\ref{swcond1}) is immediate from (\ref{bdcond1}) and
$\omega_N=N$. To verify (\ref{swcond2}) we solve for $\pi_{jN}, j\in
\{
1,\ldots,N-1\}$.
%
\begin{lemma} \label{lemma1}
If (\ref{bdcond2}) holds, then  $N \pi_{1N} \to1$ and $N\pi_{(N-1)
0}\to1$.
\end{lemma}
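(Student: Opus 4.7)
The plan is to use the classical closed-form expression for hitting probabilities of a birth-death chain in terms of the ratios $\rho_k := r_-(k)/r_+(k) = 1 + \varepsilon_N(k)$, and then to show that assumption \eqref{bdcond2} forces the cumulative products $\prod_{k=1}^{i}\rho_k$ to converge uniformly to $1$ as $N\to\infty$. Once this is established the formulas for $\pi_{1N}$ and $\pi_{(N-1)\,0}$ collapse to $1/N$ up to a $(1+o(1))$ factor, giving both limits.

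More concretely, for a birth-death chain on $\{0,\dots,N\}$ with absorbing boundaries, standard first-step analysis (see e.g.\ \cite{KT}) yields
\[
\pi_{jN}=\frac{\sum_{i=0}^{j-1}\prod_{k=1}^{i}\rho_k}{\sum_{i=0}^{N-1}\prod_{k=1}^{i}\rho_k},\qquad j\in\{1,\dots,N-1\},
\]
with the convention that the empty product is $1$. Specializing to $j=1$ gives
\[
\pi_{1N}=\frac{1}{\sum_{i=0}^{N-1}\prod_{k=1}^{i}\rho_k},
\]
and using $\pi_{(N-1)\,0}=1-\pi_{(N-1)\,N}$ together with the same formula gives
\[
\pi_{(N-1)\,0}=\frac{\prod_{k=1}^{N-1}\rho_k}{\sum_{i=0}^{N-1}\prod_{k=1}^{i}\rho_k}.
\]
So both quantities reduce to controlling the denominator and, in the second case, also the terminal product.

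The key estimate, which I expect to be the only nontrivial step, is that \eqref{bdcond2} implies
\[
\max_{0\le i\le N-1}\Bigl|\prod_{k=1}^{i}\rho_k-1\Bigr|\longrightarrow 0\quad\text{as }N\to\infty.
\]
To see this, write $\prod_{k=1}^{i}\rho_k=\exp\!\bigl(\sum_{k=1}^{i}\log(1+\varepsilon_N(k))\bigr)$. Since $\sum_{k=1}^{N-1}|\varepsilon_N(k)|\to 0$, we have $\max_k|\varepsilon_N(k)|\to 0$, so for $N$ large enough $|\log(1+\varepsilon_N(k))|\le 2|\varepsilon_N(k)|$ for every $k$. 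Therefore, uniformly in $i$,
\[
\Bigl|\sum_{k=1}^{i}\log(1+\varepsilon_N(k))\Bigr|\le 2\sum_{k=1}^{N-1}|\varepsilon_N(k)|\longrightarrow 0,
\]
and the uniform convergence of the products to $1$ follows by continuity of $\exp$.

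With this uniform bound in hand the conclusion is immediate: the denominator satisfies
\[
\sum_{i=0}^{N-1}\prod_{k=1}^{i}\rho_k = N\bigl(1+o(1)\bigr),
\]
so $N\pi_{1N}=\frac{N}{N(1+o(1))}\to 1$, while $\prod_{k=1}^{N-1}\rho_k\to 1$ gives $N\pi_{(N-1)\,0}=\frac{N\cdot(1+o(1))}{N(1+o(1))}\to 1$ as well. The main (and really only) obstacle is getting the uniform-in-$i$ control on the partial products, which the $\ell^1$-smallness in \eqref{bdcond2} delivers directly; pointwise smallness of the $\varepsilon_N(i)$ would not suffice since the products involve up to $N-1$ factors.
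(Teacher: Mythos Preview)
Your proof is correct and follows essentially the same route as the paper: both use the classical birth--death hitting-probability formula and then show that \eqref{bdcond2} forces the partial products $\prod_{k=1}^{i}(1+\varepsilon_N(k))$ to converge uniformly to $1$, from which $N\pi_{1N}\to 1$ is immediate. The only cosmetic difference is that for $N\pi_{(N-1)\,0}\to 1$ the paper invokes a state-space ``flip'' symmetry (reversing the chain and reusing the first argument), whereas you read it off directly from the explicit formula $\pi_{(N-1)\,0}=\prod_{k=1}^{N-1}\rho_k\big/\sum_{i=0}^{N-1}\prod_{k=1}^{i}\rho_k$; your route is slightly more direct here.
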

\begin{pf}
Let $\vp$ be such that $\vp(X_A)$ is a martingale, that is, let $\vp(x)
= \bE[\vp(X_A(\tau_{0,N}))|X(0)=x]$ for $x\in\{1,\ldots, N-1\}$ and
$\vp
(0)=0, \vp(1)=1$. Standard result for birth--death processes, using a
recursive equation for $\psi(x)=\vp(x)-\vp(x-1)$, gives
\[
\vp(x) = \sum_{i=1}^{x} \psi(i) = \sum
_{i=1}^{x} \prod
_{j=1}^{i-1} \frac
{r_-(j)}{r_+(j)}.
\]
By the optional stopping theorem for the stopping time $\tau_{0,N}$,
\[
\vp(i) = \bE\bigl[\vp\bigl(X_A(\tau_{0,N})\bigr)|X(0)=i
\bigr] = \pi_{i0}\vp(0)+\pi _{iN}\vp(N),
\]
so $\pi_{iN} = {(\vp(i)-\vp(0))}/{(\vp(N)-\vp(0))}=\vp(i)/\vp
(N)$, and
\[
\pi_{1N} = \frac{1}{\vp(N)}= \Biggl(\sum
_{i=1}^{N} \prod_{j=1}^{i-1}
\frac
{r_-(j)}{r_+(j)} \Biggr)^{-1}= \Biggl(\sum
_{i=1}^{N} \prod_{j=1}^{i-1}
\bigl(1+\ve_N(j)\bigr) \Biggr)^{-1}=\frac{1}{N c(N)},
\]
where $c(N)=\frac{1}N \sum_{i=1}^{N} \prod_{j=1}^{i-1} (1+\ve_N(j))$.

Condition (\ref{bdcond2}) implies that $\sup_{1\leq j \leq N-1} \{
|\ep
_N(j)|\} \to0$, so let $N_0$ be such that $\forall N>N_0$ and $\forall
j \in\{1,\ldots,N-1\}$, $|\ve_N(j)| < 1/3$. Since $\forall x \in
[0,1/3), 1-x \geq e^{-x-x^2}$, and $\forall x\in\mathbb{R}, 1+x \leq
e^x$, we have that uniformly for all $1 \leq a,b \leq N-1$, where $N>N_0$
%
\begin{equation}\qquad
\label{expineq1} \prod_{j=a}^b \bigl(1+
\ve_N(j)\bigr) \leq\prod_{j=1}^{N-1}
\bigl(1+\bigl|\ve_N(j)\bigr|\bigr) \leq \prod_{j=1}^{N-1}
e^{|\ve_N(j)|} = \exp \Biggl(\sum_{j=1}^{N-1}
\bigl|\ve _N(j)\bigr| \Biggr)
\end{equation}
and
%
\begin{eqnarray}\label{expineq2}
\prod_{j=a}^b \bigl(1+
\ve_N(j)\bigr) &\geq&\prod_{j=a}^{b}
\bigl(1-\bigl|\ve_N(j)\bigr|\bigr) \geq \prod_{j=1}^{N-1}
\bigl(1-\bigl|\ve_N(j)\bigr|\bigr)
\nonumber
\\
 &\geq&\prod_{j=1}^{N-1} \exp
\bigl(-\bigl|\ve _N(j)\bigr|-\bigl|\ve _N(j)\bigr|^2 \bigr) \\
&= &\exp
\Biggl(-\sum_{j=1}^{N-1} \bigl(\bigl|
\ve_N(j)\bigr|+\bigl|\ve _N(j)\bigr|^2\bigr) \Biggr)\nonumber
\end{eqnarray}
hence $N\pi_{1N}=1/c(N)\to1$.\vadjust{\goodbreak}

To get $N\pi_{(N-1) 0}\to1$, if we flip the state space by letting
$\check{\iota}=N-i$, then the new boundaries are $\check{0}=N$ and
$\check{N}=0$, and we get a birth--death process $\check{X}_A$ whose
rates are precisely the flip of those for $X_A$. That is, the rates of
$\check{X}_A$ are $\check{r}_+(\check{\iota})=r_-(N-i)$,  $\check
{r}_-(\check{\iota})=r_+(N-i)$, and their ratio is
\[
1+\check{\ve}_N({\check{\iota}})=\frac{\check{r}_-(\check{\iota
})}{\check{r}_+(\check{\iota})}=
\frac{r_+(N-i)}{r_-(N-i)},
\]
giving the same product of ratios as for the original process.
\[
\prod_{\check{\iota}=1}^{N-1}\bigl(1+\check{
\ve}_N({\check{\iota }})\bigr)=\prod_{j=1}^{N-1}
\bigl(1+\ve_N(j)\bigr).
\]
Hence, the exact argument above now applied to $\check{X}_A$ gives
$N\check{\pi}_{\check{1}\check{N}}=\break N\pi_{(N-1)0}\to1$ as well.
\end{pf}


Once we have the result of Lemma~\ref{lemma1}, it is immediate that
$N\pi_{1N}\to1, N\pi_{(N-1)0}$ imply (\ref{swcond2}) with $\tr
_{01}=\tr
_{+}, \tr_{10}=\tr_{-}$.

To verify (\ref{swcond3}) we next solve for $e_{j}, j\in\{1,\ldots
,N-1\}
$, where $e_j=\bE[\tau_{0,N}|\break  X_A(0)=j]$ for $j\in\{1,\ldots,N-1\}$, and
$e_0=e_N=0$.
%
\begin{lemma}\label{lemma2}
If (\ref{bdcond2}) and (\ref{bdcond3}) hold, then $Ne_1 \to0$ and $N
e_{N-1} \to0$.
\end{lemma}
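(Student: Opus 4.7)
The plan is to write down the Poisson equation $\mathcal{L} e = -1$ for the mean hitting time $e_j$, reduce it to a first-order recurrence for the differences $\Delta e_j := e_{j+1}-e_j$, solve that recurrence explicitly using $e_0 = e_N = 0$, and then invoke (\ref{bdcond2}) and (\ref{bdcond3}) to see that the resulting expression is $o(1/N)$. The bound on $e_{N-1}$ will follow by the same flipped-chain trick used in the proof of Lemma~\ref{lemma1}.

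First I would write the generator equation $r_+(j)(e_{j+1}-e_j)-r_-(j)(e_j-e_{j-1})=-1$ for $1\le j\le N-1$, and translate it via $r_-(j)/r_+(j)=1+\ve_N(j)$ into the recurrence $\Delta e_j = (1+\ve_N(j))\Delta e_{j-1} - 1/r_+(j)$. Unrolling gives
\begin{equation*}
\Delta e_j = \Delta e_0\prod_{i=1}^{j}(1+\ve_N(i)) - \sum_{i=1}^{j}\frac{1}{r_+(i)}\prod_{k=i+1}^{j}(1+\ve_N(k)).
\end{equation*}
Using $\sum_{j=0}^{N-1}\Delta e_j = e_N - e_0 = 0$ and swapping the order of the double sum yields
\begin{equation*}
\Delta e_0 \cdot \sum_{j=0}^{N-1}\prod_{i=1}^{j}(1+\ve_N(i)) \;=\; \sum_{i=1}^{N-1}\frac{1}{r_+(i)}\sum_{j=i}^{N-1}\prod_{k=i+1}^{j}(1+\ve_N(k)).
\end{equation*}

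The second step is to apply the exponential bounds (\ref{expineq1})-(\ref{expineq2}) from the proof of Lemma~\ref{lemma1}: under (\ref{bdcond2}), every finite product $\prod_{k=a}^{b}(1+\ve_N(k))$ lies in $[\exp(-C_N),\exp(C_N)]$ with $C_N=\sum_j(|\ve_N(j)|+|\ve_N(j)|^2)\to 0$, so each such product equals $1+o(1)$ uniformly in $a\le b$. The coefficient on the left is therefore $N(1+o(1))$, and the inner sum on the right is $(N-i)(1+o(1))$. Since $e_1=\Delta e_0$, this gives
\begin{equation*}
e_1 \;=\; \frac{1+o(1)}{N}\sum_{i=1}^{N-1}\frac{N-i}{r_+(i)},
\end{equation*}
which is $o(1/N)$ by the first half of (\ref{bdcond3}); hence $Ne_1 \to 0$.

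Finally, to handle $e_{N-1}$, I would apply the same argument to the flipped chain $\check{X}_A=N-X_A$ used in Lemma~\ref{lemma1}: its rates are $\check{r}_+(i)=r_-(N-i)$, $\check{r}_-(i)=r_+(N-i)$, and $\check{\ve}_N(i)=-\ve_N(N-i)/(1+\ve_N(N-i))$, so $\sum_i|\check{\ve}_N(i)|\le \tfrac{3}{2}\sum_j|\ve_N(j)|\to 0$ once $\sup_j|\ve_N(j)|<1/3$, verifying (\ref{bdcond2}) for $\check{X}_A$. A change of variable gives $\sum_i(N-i)/\check{r}_+(i)=\sum_j j/r_-(j)\to 0$ by the second half of (\ref{bdcond3}), so the argument above applied to $\check{X}_A$ yields $N\check{e}_1\to 0$; but $\check{e}_1=e_{N-1}$, which finishes the proof. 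There is no real obstacle here — everything reduces to bookkeeping for a standard birth-death Poisson equation — the only thing to track carefully is that the exponential product bounds are uniform in $a\le b$, and that (\ref{bdcond3}) is precisely the right quantitative input to absorb the factor of $N$.
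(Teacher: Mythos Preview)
Your proposal is correct and follows essentially the same approach as the paper: set up the Poisson equation, pass to the first-difference recurrence, solve it explicitly using $e_0=e_N=0$, bound all the products $\prod(1+\ve_N(j))$ by $1+o(1)$ via the exponential inequalities from Lemma~\ref{lemma1}, and then invoke (\ref{bdcond3}); the $e_{N-1}$ case is handled by the same flipped-chain trick. The only cosmetic differences are your indexing convention $\Delta e_j=e_{j+1}-e_j$ versus the paper's $f(i)=e_i-e_{i-1}$, and that you state the estimate for $e_1$ as an asymptotic equality rather than the paper's one-sided inequality --- both are fine.
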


\begin{pf}
The expected time of an excursion satisfies the recursion
\[
e_i = \frac{1}{r_-(i)+r_+(i)} + \frac{r_-(i)}{r_-(i)+r_+(i)} e_{i-1} +
\frac{r_+(i)}{r_-(i)+r_+(i)}e_{i+1},
\]
which gives
\[
r_+(i) (e_{i+1}-e_i) - r_-(i) (e_i-e_{i-1})
= -1;
\]
letting $f(i) = e_i-e_{i-1}$ gives the recursive equation
\[
f(i+1) = -\frac{1}{r_+(i)} + \frac{r_-(i)}{r_+(i)}f(i)= -
\frac
{1}{r_+(i)} + \bigl(1+\ve_N(i)\bigr)f(i).
\]
Note that except for the $ -\frac{1}{r_+(i)}$ term, this is reminiscent
of the recursion for $\psi(i) = \frac{r_-(i)}{r_+(i)} \psi(i-1)$. Hence
\[
f(k) = f(1)\prod_{j=1}^{k-1}
\bigl(1+\ve_N(j)\bigr) -\sum_{i=1}^{k-1}
\frac
{1}{r_+(i)} \prod_{j=i+1}^{k-1}
\bigl(1+\ve_N(j)\bigr).
\]
To find $f(1)=e_1-e_0=e_1$ we impose the condition $\sum_{i=1}^N f(i) =
e_N-e_0 = 0$ and get
\[
e_1 = \Biggl(\sum_{k=1}^N
\sum_{i=1}^{k-1} \frac{1}{r_+(i)} \prod
_{j=i+1}^{k-1} \bigl(1+\ve_N(j)
\bigr) \Biggr) \bigg/ \Biggl( \sum_{k=1}^N\prod
_{j=1}^{k-1} \bigl(1+\ve_N(j)
\bigr) \Biggr).
\]

Let\vspace*{1pt} $\eta_N=\sup_{1\leq a,b \leq N-1 } |\prod_a^b(1+\ep_N(j)) -
1
|$. Then (\ref{bdcond2}) implies $\eta_N \to0$ for $N>N_0$ via
\eqref
{expineq1} and \eqref{expineq2}. We have
\begin{eqnarray*}
e_1 &\leq& \Biggl(\sum_{k=1}^N
\sum_{i=1}^{k-1} \frac{1}{r_+(i)} (1+\eta
_N) \Biggr) \bigg/ \Biggl( \sum_{k=1}^N
(1-\eta_N) \Biggr)
\\
&=& \frac{1+\eta_N}{(1-\eta_N)N}\sum_{k=1}^N \sum
_{i=1}^{k-1} \frac
{1}{r_+(i)} =
\frac{1+\eta_N}{(1-\eta_N)N}\sum_{i=1}^{N-1}
\frac{N-i}{r_+(i)},
\end{eqnarray*}
and thus (\ref{bdcond3}) implies $Ne_1 \to0$.

To obtain $Ne_{N-1} \to0$ we can flip the process and consider $\check
{X}_A$ with the flipped rates as in the proof of Lemma~\ref{lemma1}.
Now, in addition to (\ref{bdcond2}), we also require the flip version
of the first condition in (\ref{bdcond3}),
\[
\sum_{\check{\iota}=1}^{N-1} \frac{\check{N}-\check{\iota
}}{\check
{r}_+(\check{\iota})}=\sum
_{i=1}^{N-1} \frac{N-i}{r_-(N-i)}=\sum
_{i=1}^{N-1} \frac{i}{r_-(i)} \to0,
\]
which are guaranteed by the second condition in (\ref{bdcond3}).
\end{pf}

Once we have the results of both Lemmas \ref{lemma1} and~\ref
{lemma2}, we can deduce that $Ne_1\to0$ and $Ne_{N-1}\to0$, which
imply (\ref{swcond3}). Namely, from $e_1=e_{1N}\pi_{1N}+e_{10}\pi_{10}$,
\[
e_{1N}\le\frac{e_1}{\pi_{1N}}=\frac{Ne_1}{N\pi_{1N}}\to0
\]
since Lemma~\ref{lemma1} ensures convergence of the denominator to $1$
and Lemma~\ref{lemma2} of the numerator to 0. Similarly
\[
Ne_{10}\le\frac{Ne_1}{\pi_{10}}\le\frac{Ne_1}{r_-(0)/(r_+(0)+r_-(0))} \to0
\]
since $\pi_{10}$ contains the positive probability (independent of $N$)
of an immediate return to $0$.
\end{pf*}

For a reaction system and splitting with unit net changes only, since
splitting is unbiased we have $p_{i,i+1}=p_{i,i-1}=\frac{1}2$, for $i\ne
0,N$, and the contribution to $r_+(i)$ and $r_-(i)$ from splitting is
$\frac{1}2\gamma(i,N)$. Let us write $\gamma(i,N)=\gamma(N)p_i$ where
$\gamma(N)$ depends on $N$ only (i.e., is state independent) and
$p_i=O(1)$. Then, in any state, the contribution of the splitting is of
$O(\gamma(N))$, while the contribution of the reaction system is of
$O(N)$ due to the standard scaling of reaction rates. Hence, we have
the following result.

\begin{theorem}\label{bdcor}
If the reaction system has increments of size $\{1,-1\}$ only; contains
reactions $aA \to(a-1)A+B$, $bB \to A+(b-1)B$ some $a,b>0$; has rates
with standard scaling
$\ka^{ab}_\zeta(N)=\tka_\zeta^{ab}N^{1-(a+b)}$;\vadjust{\goodbreak}
and if the splitting mechanism has increments of size $\{1,-1\}$,
$p_{0,0}=p_{N,N}=1$, with rate is $\gamma(i,N)=\gamma(N)p_i$ where
$\gamma(N)$ and $p_i=O(1)$ satisfy
%
\begin{equation}\qquad
\label{corcond} \frac{N}{\gamma(N)}\sum_{i=1}^{N-1}
\frac{1}{p_i}\to0, \qquad\frac
{1}{\gamma
(N)}\sum_{i=1}^{N-1}
\frac{i}{p_i}\to0,\qquad \frac{1}{\gamma(N)}\sum_{i=1}^{N-1}
\frac{N-i}{p_i}\to0;
\end{equation}
then the results of Proposition~\ref{specthm} apply with $\beta_N=1$
and $\tr_{01}=\sum_{(0,b,1)\in\calI}\tka^{0b}_{1},\break   \tr_{10}=\sum_{(a,0,-1)\in\calI}\tka^{a0}_{-1}$.
\end{theorem}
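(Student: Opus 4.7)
The plan is to reduce the theorem to Proposition~\ref{specthm} by verifying conditions (\ref{bdcond1})--(\ref{bdcond3}) under the stated hypotheses, and simultaneously reading off $\tr_{01}$ and $\tr_{10}$. Because all jumps are of size $\pm 1$, $X_A$ is a birth--death chain, and for $i\in\{1,\dots,N-1\}$ the rates decompose cleanly as
\[
r_\pm(i) = R_\pm(i) + \tfrac12\gamma(N)p_i,\qquad R_\pm(i):=\!\!\!\sum_{(a,b,\pm 1)\in\calI}\!\!\!\tka_{\pm 1}^{ab}\,N^{1-(a+b)}(i)_a(N-i)_b,
\]
while at $i\in\{0,N\}$ the splitting term vanishes by $p_{0,0}=p_{N,N}=1$. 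The whole proof is an inspection of these explicit expressions.

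First I would verify (\ref{bdcond1}). Since any reaction that increases $A$ from state $0$ must have $a=0$, we have $r_+(0)=\sum_{(0,b,1)\in\calI}\tka_1^{0b}N^{1-b}(N)_b$; using $N^{-b}(N)_b\to 1$ gives $r_+(0)/N\to\sum_{(0,b,1)\in\calI}\tka_1^{0b}=:\tr_+$, strictly positive thanks to the postulated reaction $bB\to A+(b-1)B$ for some $b>0$. The symmetric computation at $i=N$ produces $r_-(N)/N\to\tr_-:=\sum_{(a,0,-1)\in\calI}\tka_{-1}^{a0}$. Matching with Proposition~\ref{specthm} then gives $\tr_{01}=\tr_+$ and $\tr_{10}=\tr_-$, which are exactly the constants claimed in the theorem.

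Next I would dispatch (\ref{bdcond2}) and (\ref{bdcond3}). The decisive observation is that standard scaling and the bound $(i)_a(N-i)_b\le N^{a+b}$, together with finiteness of $\calI$, yield a uniform estimate $R_\pm(i)=O(N)$ on $\{1,\dots,N-1\}$; and since splitting contributes identically to $r_+$ and $r_-$, the bias numerator satisfies $r_-(i)-r_+(i)=R_-(i)-R_+(i)=O(N)$. The elementary lower bound $r_+(i)\ge\tfrac12\gamma(N)p_i$ then gives $|\ep_N(i)|\le CN/(\gamma(N)p_i)$, so
\[
\sum_{i=1}^{N-1}|\ep_N(i)|\le\frac{CN}{\gamma(N)}\sum_{i=1}^{N-1}\frac{1}{p_i}\longrightarrow 0
\]
by the first condition in (\ref{corcond}), establishing (\ref{bdcond2}). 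The same lower bound reduces (\ref{bdcond3}) to
\[
\sum_{i=1}^{N-1}\frac{N-i}{r_+(i)}\le\frac{2}{\gamma(N)}\sum_{i=1}^{N-1}\frac{N-i}{p_i},\qquad \sum_{i=1}^{N-1}\frac{i}{r_-(i)}\le\frac{2}{\gamma(N)}\sum_{i=1}^{N-1}\frac{i}{p_i},
\]
both of which vanish by the remaining two conditions in (\ref{corcond}).

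I do not anticipate any substantive obstacle: the argument is a direct unpacking of the hypotheses, with the single structural input being that the splitting contributes identically to $r_+$ and $r_-$ in the interior, so only the $O(N)$ reaction rates drive the bias $\ep_N$. The hypotheses in (\ref{corcond}) are tailored precisely to what is needed to control the sums via the pointwise bound $r_\pm(i)\ge\tfrac12\gamma(N)p_i$; no further assumption beyond this elementary inequality is required. The only point worth double-checking carefully is the handling of interior states where $p_i$ could be small (e.g.\ near the boundary in the Moran-type example), but this is absorbed into (\ref{corcond}) itself and does not enter the argument separately.
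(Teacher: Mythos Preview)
Your proposal is correct and follows essentially the same route as the paper: write $r_\pm(i)$ as a splitting part $\tfrac12\gamma(N)p_i$ plus a reaction part that is $O(N)$ uniformly, read off $\tr_+,\tr_-$ at the boundaries for (\ref{bdcond1}), bound $|\ve_N(i)|\le CN/(\gamma(N)p_i)$ via the lower bound $r_+(i)\ge\tfrac12\gamma(N)p_i$ to get (\ref{bdcond2}) from the first condition in (\ref{corcond}), and use the same lower bound to reduce (\ref{bdcond3}) to the remaining two conditions in (\ref{corcond}). The paper's proof is identical in structure, only making the constant explicit as $2R\max_{(a,b,\zeta)\in\calI}\tka_\zeta^{ab}$ where $R=|\calI|$; your treatment of the boundary limit $N^{-b}(N)_b\to 1$ is in fact slightly more careful than the paper's.
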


\begin{pf}
The transition rates for $X_A$ are given by
\begin{eqnarray}
r_+(i) &=&\frac{1}2\gamma(N) p_{i}+ N \sum
_{(a,b,1)\in\calI} \tka ^{ab}_{1}
(i/N)_{a,N}(1-i/N)_{b,N},\nonumber\\
\eqntext{ i=0,\ldots, N-1,}
\\
r_-(i) &=& \frac{1}2\gamma(N) p_{i}+ N \sum
_{(a,b,-1)\in\calI} \tka ^{ab}_{-1}
(i/N)_{a,N}(1-i/N)_{b,N},\qquad i=1,\ldots, N.\nonumber 
\end{eqnarray}
On the boundary the rates are
\[
r_+(0) = N \sum_{(0,b,1)\in\calI} \tka_{1}^{0b},\qquad
r_-(N) = N \sum_{(a,0,-1)\in\calI} \tka_{-1}^{a0},
\]
and (\ref{bdcond1}) holds with $\tr_+ = \sum_{(0,b,1)\in\calI}
\tka
_{1}^{0b},  \tr_- = \sum_{(a,0,-1)\in\calI} \tka_{-1}^{a0}$.
Also,
\begin{eqnarray*}
\ve_N(i) &=& \frac{({1}/2)\gamma(N)p_{i}+ N \sum_{(a,b,-1)\in\calI
} \tka
^{ab}_{-1} (i/N)_{a,N}(1-i/N)_{b,N}}{({1}/2)\gamma(N)p_{i}+ N \sum_{(a,b,1)\in\calI} \tka^{ab}_{1} (i/N)_{a,N}(1-i/N)_{b,N}}- 1
\\
&=& \biggl(N  \biggl(\sum_{(a,b,1)\in\calI} \tka^{ab}_{-1}
(i/N)_{a,N}(1-i/N)_{b,N} \\
&&\hspace*{20pt}{}- \sum_{(a,b,1)\in\calI} \tka^{ab}_{1}
(i/N)_{a,N}(1-i/N)_{b,N}  \biggr)\biggr)\\
&&{}\Big/
\biggl(({1}/2)\gamma(N) p_{i}+ N \sum_{(a,b,1)\in\calI} \tka^{ab}_{1}
(i/N)_{a,N}(1-i/N)_{b,N}\biggr)
\\
&\leq&\frac{ 2NR \max_{(a,b,\zeta)\in\calI}\tka_\zeta
^{ab}}{\gamma(N) p_{i}}
\end{eqnarray*}
since $\tka^{ab}_\zeta\ge0$, where $R<\infty$ is the number of
reactions in the system. Therefore
\[
\sum_{i=1}^{N-1} \bigl|\ve_N(i)\bigr|
\leq2R\max_{(a,b,\zeta)\in\calI}\bigl\{ \tka _\zeta^{ab}
\bigr\}\frac{N}{\gamma(N)}\sum_{i=1}^{N-1}
\frac{1}{ p_{i}},
\]
and the first condition in (\ref{corcond}) ensures that $\sum_{i=1}^{N-1} |\ve_N(i)|\to0$ and (\ref{bdcond2}) holds. 
On the other hand,
\begin{eqnarray*}
\sum_{i=1}^{N-1} \frac{i}{r_-(i)} &=& \sum
_{i=1}^{N-1} \frac{i}{({1}/2)\gamma(N)p_{i}+ \sum_{(a,b,-1)\in\calI} \tka^{ab}_{-1}
(i/N)^{a-1}(1-i/N)^b}\\
&\leq&
\frac{2}{\gamma(N)}\sum_{i=1}^{N-1}
\frac
{i}{p_{i}}
\end{eqnarray*}
and
\[
\sum_{i=1}^{N-1} \frac{N-i}{r_+(i)}  \leq
\frac{2}{\gamma(N)}\sum_{i=1}^{N-1}
\frac{N-i}{p_{i}}
\]
so the last two conditions in (\ref{corcond}) ensure that \eqref
{bdcond3} is satisfied as well.
\end{pf}

\subsection{Example: Bistable behavior from fast splitting}\label
{subsec:exampleswitch}
We revisit the same example of the reaction system we analyzed in
Section~\ref{subsec:examplebist}:
%
\begin{eqnarray}
\label{dwreac2} A & \stackrel{\ka^{10}_{-1}} {\to}& B,
\\
B & \stackrel{\ka^{01}_{1}} {\to}& A,
\\
A+B & \stackrel{\ka^{11}_{-1}} {\to} &2B,
\\
2A+B & \stackrel{\ka^{21}_{1}} {\to}& 3A \label{dwreac2last}
\end{eqnarray}
with the standard mass-action scaling, $\ka^{ab}_\zeta=N^{-(a+b)+1}
\tka
^{ab}_\zeta$. In this system the only reactions which counteract the
absorption on the boundaries are the first two unimolecular reactions.
Also, note that all system reactions change the molecular count of $A$
only by increments of size $1$.

We chose the same simple splitting mechanism as before, since
conditions~(\ref{bdcond2}), (\ref{bdcond3}) and (\ref{corcond}) are much
easier to verify than conditions (\ref{swcond2}), (\ref{swcond3}).
Recall that, if we were to assume $\gamma(N)=\frac{1}2\ve^2 N^2$ for some
small $\ve^2>0$, then the limiting process for $X_N$ would be the
diffusion process $\tX_\ve$ in (\ref{example:bistsde}); the splitting
noise is even less present if we were to assume $\gamma(N)=\frac{1}2N$,
as shown in Section~\ref{subsec:examplebist}.
In contrast, if we assume the rate $\gamma(N)$ grows fast enough so
that $N^2\ln{N}/\gamma(N)\to0$, then we can show that the conditions
in Proposition~\ref{specthm} are satisfied, and the behavior of the
limiting process for $X_N$ is described by a different two-state jump
Markov process.

There are only two reactions in (\ref{dwreac2})--(\ref{dwreac2last})
active on the boundaries, so $\tr_{01}=\tka^{01}_{1}$ and $\tr
_{10}=\tka
^{10}_{-1}$. To verify (\ref{corcond}), note that we have $\gamma
(i,N)=\gamma(N)p_i$ with $p_i=\frac{i}{N}(1-\frac
{i}{N})={i(N-i)}/{N^2}$, so
\[
\frac{N}{\gamma(N)}\sum_{i=1}^{N-1}
\frac{1}{p_{i}}=\frac
{N^3}{\gamma
(N)}\sum_{i=1}^{N-1}
\frac{1}{i(N-i)} = \frac{N^2}{\gamma(N)}\sum_{i=1}^{N-1}
\biggl( \frac{1}{i}+\frac{1}{N-i} \biggr)=\frac
{2N^2h_N}{\gamma(N)}
\]
using partial fractions $\frac{1}{i(N-i)} = \frac{1}{N}(\frac{1}{i} +
\frac{1}{(N-i)})$, where $h_N$ is the $N$th harmonic sum. Also
\[
\frac{1}{\gamma(N)}\sum_{i=1}^{N-1}
\frac{i}{p_{i}}= \frac
{N^2}{\gamma
(N)}\sum_{i=1}^{N-1}
\frac{1}{N-i} = \frac{N^2h_N}{\gamma(N)}
\]
and
\[
\frac{1}{\gamma(N)}\sum_{i=1}^{N-1}
\frac{N-i}{p_{i}} =\frac
{1}{\gamma
(N)}\sum_{i=1}^{N-1}
\frac{1}{i} = \frac{N^2h_N}{\gamma(N)}
\]
as well. Hence, $N^2{\ln N}/{\gamma(N)} \to0$ ensures that all
conditions in (\ref{corcond}) hold.

This example shows that for any reaction system with unit increments
whose drift has a double well potential, and for this particular choice
of the splitting mechanism, we can identify orders of magnitude for
$\gamma(N)$ that lead to different limiting behaviors:

$\bullet$  If $\gamma(N)\ll N$, bistability is caused by large
deviations of the Markov jump process, and the rescaled process
transitions between neighborhoods of the drift equilibirum points on a
time-scale of order $e^{N(\gamma(N))^{-1}\imath_{x_i,x_2}}$, with
\mbox{$N(\gamma(N))^{-1}\to\infty$}.

$\bullet$  If $\gamma(N)\sim\ve^2 N^2$, $\ve^2>0$ a constant,
bistability is caused by large deviations of a diffusion with a small
perturbation coefficient, with transitions between neighborhoods of the
drift equilibirum points on a time-scale of order $e^{\ve
^{-2}I_{x_i,x_2}}$.

$\bullet$  If $\gamma(N)\gg N^{2}\ln{N}$, bistability is caused by
excessive noise, and switching between the boundary points occurs on a
time-scale of order $1$.

%
\begin{figure}

\includegraphics{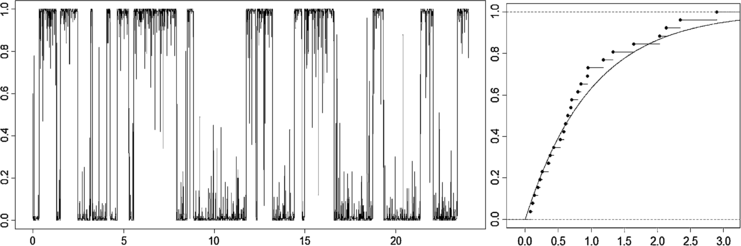}

\caption{Sample path $X_N(t)$ (left: $x$-axis${}=t$,
$y$-axis${}=X_N(t)=N^{-1}X(t)\subset[0,1]$) of the system (\protect\ref
{dwreac1})--(\protect\ref{dwreac4}) with (\protect\ref{wfsplit})
splitting, under
standard mass-action scaling for reactions and $\gamma(N)=\frac{1}2
N^{3}$ (parameter $N=200$); and the distribution of switching times
plotted (dots) in terms of quantiles (right: $x$-axis${}=t$,
$y$-axis${}={}$fraction of switching times of length $\le t$). Solid line
($1-e^{-t}$) indicates the quantiles of the exponential (mean $1$)
distribution for comparison.}
\label{fig:switching}
\end{figure}

Note that the order of magnitude $N^2$ only represents the scale on
which we have assumed that the variance of the splitting mechanism is
in the diffusive case [see assumption (${7}^*$) in Section~\ref{subsec:splitmodel}].
Also note that existence of two stable states in the deterministic
model for the reaction system is not needed for the result of this
section. We chose the same reaction system in order to make the
comparison with the results in Section~\ref{sec:bistable} and emphasize
the difference between the effects of ``slow'' and ``fast'' splitting
on the same reaction system.

Figure~\ref{fig:switching} shows an exact simulation of a sample path
of the rescaled process $X_N=N^{-1}X_A$ for a relatively short period
of time, spending most of its time at boundaries\vadjust{\goodbreak} $\{0\}\cup\{1\}$,
switching between them at approximately rates $\tr_{01}=\tka
^{01}_{1}=1, \tr_{10}=\tka^{10}_{-1}=1$; see Section~\ref{subsec:examplebist} for coefficient values. Switching between states
occurs at a time-scale $\beta_N=1$, and since $\tr_{01}=\tka
^{01}_{1}=1, \tr_{10}=\tka^{10}_{-1}=1$ the distribution of switching
times should approximately be an exponential distribution (mean $1$)
distribution. This is shown in the quantile plot in Figure~\ref{fig:switching}, where the fraction of switching times of length $\le
t$ is plotted against the same fraction $1-e^{-t}$ for the exponential
(mean $1$) distribution.


\section{Discussion}


We showed that there are two different types of stochastic bistable
behavior in which the system spends most of its time at or near one of
two states and switches between them. For one of these types of
bistability, because the magnitude of noise is high, it can occur even
in a system whose deterministic model would not allow for a possibility
of bistability at all. The detreministic system can have unique stable
points, as, for example, in the neutral Wright--Fisher model with
mutation. For the other type of bistability, where the noise is
relatively low, one needs the reaction system to have two deterministic
stable points, as, for example, in the Schl\"ogl model. The important
point is what constitutes ``high'' and ``low'' levels of noise: the
determining quantity $\ve_A(N)$ (\ref{epsA}) depends on the relative
size in terms of $N$ of the variance to the average change in the
system, where $N$ is a scaling parameter for the size of the system. We
referred to $\ve_A(N)\approx0$ as ``slow'' splitting, and to $\ve
_A(N)\approx\infty$ as ``fast'' splitting, interpreted relative to the
reaction dynamics.

We discussed the differences in the qualitative signatures of
bistability in the two cases:
\begin{itemize}
\item In case of ``slow'' splitting, the states where the process
spends most of its time are determined by\vadjust{\goodbreak} the drift of the
deterministic model for the reaction system; in contrast, in case of
``fast'' splitting, they are simply the two extremes for the size of
the system.

\item In case of ``slow'' splitting, the rate of switching is
determined by the relative magnitude of the splitting variance to the
reaction drift and by the size of the potential barrier in the
deterministic model for the system; on the other hand, in case of
``fast'' splitting, the rates of switching are determined only by the
standardized rates of the reactions that are realizable from one of the
extremes for the system size.

\item In case of ``slow'' splitting, the time-scale $\beta_\ve$ or
$\beta_{\ve_N}$ on which the switching happens is exponential in (some
increasing function of) the size of the system; in contrast, in case of
``fast'' splitting, the time-scale $\beta_N$ is at most polynomial.
\end{itemize}

We also showed that the observables of bistability (switching states
and rates) are not sensitive to precise specification of the reaction
system, as they depend only on: equilibrium points, size of potential
barrier in ``slow'' splitting, and drift values at boundaries in
``fast'' splitting. However, bistability is very sensitive to the
distributional form of the splitting/resampling mechanism: the variance
of its distribution determines the potential barrier in ``slow''
splitting, and the harmonic sum of its transition probabilities
determines the threshold for appearance of ``fast'' splitting.

In the context of cellular systems of biochemical reactions, the
problem of determining the partitioning errors due to cell division is
experimentally extremely challenging (Huh and Paulsson \cite{Pa1,Pa2}). The measurements for single cells rely on count estimates for
related species rather than the molecular species of interest. In
addition, in order to estimate the magnitude of intracellular noise,
one has to separate the intrinsic from the extrinsic sources of
randomness. How random is cell division, and how it compares in
magnitude to the biochemical noise is a question that is very much
open. However, since our analysis only depends on a few general
features of the splitting mechanism (unbiasedness and time-scale of the
rate), it is also possible that stochastic bistability is achieved by a
set of auxiliary reactions, instead of splitting, acting on a different
time-scale from the rest of the system. For example, the protein
bursting mechanism may act as the driver of stochastic bistability
(Zong et al. \cite{ZSSSG}, Kaufman et al. \cite{KYMvO}).

One can try to rely on the qualitative signatures of bistability in
order to assess which of the two types of bistability we discussed is
relevant in a specific cellular biochemical systems. When the switching
times are orders of magnitude greater than the molecular count of the
switching species, as in the lysogenic switch of \textit{E. coli}, the ``slow''
splitting may be the more likely mechanism. This evaluation is
sensitive to the choice of time units, given which both the splitting
and reaction rates should be of reasonable orders of magnitude in terms
of the molecular count. It is natural to chose units of time\vadjust{\goodbreak}
corresponding to cell-doubling or cell-division time (the splitting
rate is then of order 1---and the range of splitting rates in our
model, in any of the different cases, is at most linear). In an
experimental analysis of this system, Zong et al. \cite{ZSSSG}
observed that the switching times of the cell are exponential in the
number of protein burst events, and correspond to a calculation of the
rare event probability of the bursts, as can be interpreted by large
deviations in our ``low'' auxiliary noise (``slow'' splitting) type of
bistability.
In contrast, when the switching times are relatively short, as in the
gene expression switch in \textit{S. cerevisiae}, the ``fast'' splitting is the
probable mechanism. In the engineered chemical reaction network version
of this system, Kaufmann et al. \cite{KYMvO} show that increasing the
protein burst size (increasing the auxiliary noise) leads to more
highly correlated switching behavior in different cell lineages, as
could be inferred from properties of our ``high'' auxiliary noise
(``fast'' splitting) bistability type.


\section*{Acknowledgment}
The authors would like to thank Jonathan
Mattingly, whose suggestion during the BIRS workshop on ``Multi-scale
Stochastic Modeling of Cell Dynamics'' began this investigation.

%



\printaddresses


\begin{thebibliography}{30}

\bibitem{vO}
%
\begin{barticle}[pbm]
\bauthor{\bsnm{Acar},~\bfnm{Murat}\binits{M.}},
\bauthor{\bsnm{Mettetal},~\bfnm{Jerome~T.}\binits{J.~T.}} \AND
\bauthor{\bparticle{van} \bsnm{Oudenaarden},~\bfnm
{Alexander}\binits{A.}}
(\byear{2008}).
\btitle{Stochastic switching as a survival strategy in fluctuating
environments}.
\bjournal{Nat. Genet.}
\bvolume{40}%
\bpages{471--475}.%
\bid{doi={10.1038/ng.110}, issn={1546-1718}, pii={ng.110}, pmid={18362885}}%
\bptok{imsref}%
\end{barticle}
%
\endbibitem

\bibitem{Av}
%
\begin{barticle}[pbm]
\bauthor{\bsnm{Avery},~\bfnm{Simon~V.}\binits{S.~V.}}
(\byear{2006}).
\btitle{Microbial cell individuality and the underlying sources of
heterogeneity}.
\bjournal{Nat. Rev. Microbiol.}
\bvolume{4}
\bpages{577--587}.
\bid{doi={10.1038/nrmicro1460}, issn={1740-1526}, pii={nrmicro1460},
pmid={16845428}}
\bptok{imsref}%
\end{barticle}
%
\endbibitem

\bibitem{BKPR}
%
\begin{barticle}[mr]
\bauthor{\bsnm{Ball},~\bfnm{Karen}\binits{K.}},
\bauthor{\bsnm{Kurtz},~\bfnm{Thomas~G.}\binits{T.~G.}},
\bauthor{\bsnm{Popovic},~\bfnm{Lea}\binits{L.}} \AND
\bauthor{\bsnm{Rempala},~\bfnm{Greg}\binits{G.}}
(\byear{2006}).
\btitle{Asymptotic analysis of multiscale approximations to reaction networks}.
\bjournal{Ann. Appl. Probab.}
\bvolume{16}
\bpages{1925--1961}.
\bid{doi={10.1214/105051606000000420}, issn={1050-5164}, mr={2288709}}
\bptok{imsref}%
\end{barticle}
%
\endbibitem

\bibitem{BQ}
%
\begin{barticle}[auto:STB|2013/10/14|10:36:11]
\bauthor{\bsnm{Bishop},~\bfnm{L.~M.}\binits{L.~M.}} \AND
\bauthor{\bsnm{Qian},~\bfnm{H.}\binits{H.}}
(\byear{2010}).
\btitle{Stochastic bistability and bifurcation in a mesoscopic
signaling system
with autocatalytic kinase}.
\bjournal{Biophysical Journal}
\bvolume{98}
\bpages{111}.
\bptok{imsref}%
\end{barticle}
%
\endbibitem

\bibitem{B-S}
%
\begin{barticle}[pbm]
\bauthor{\bsnm{Brehm-Stecher},~\bfnm{Byron~F.}\binits{B.~F.}} \AND
\bauthor{\bsnm{Johnson},~\bfnm{Eric~A.}\binits{E.~A.}}
(\byear{2004}).
\btitle{Single-cell microbiology: Tools, technologies, and applications}.
\bjournal{Microbiol. Mol. Biol. Rev.}
\bvolume{68}
\bpages{538--559}.
\bid{doi={10.1128/MMBR.68.3.538-559.2004}, issn={1092-2172}, pii={68/3/538},
pmcid={515252}, pmid={15353569}}
\bptok{imsref}%
\end{barticle}
%
\endbibitem

\bibitem{Ch}
%
\begin{barticle}[mr]
\bauthor{\bsnm{Chv{\'a}tal},~\bfnm{V.}\binits{V.}}
(\byear{1979}).
\btitle{The tail of the hypergeometric distribution}.
\bjournal{Discrete Math.}
\bvolume{25}
\bpages{285--287}.
\bid{doi={10.1016/0012-365X(79)90084-0}, issn={0012-365X}, mr={0534946}}
\bptok{imsref}%
\end{barticle}
%
\endbibitem

\bibitem{DZ}
%
\begin{bbook}[mr]
\bauthor{\bsnm{Dembo},~\bfnm{Amir}\binits{A.}} \AND
\bauthor{\bsnm{Zeitouni},~\bfnm{Ofer}\binits{O.}}
(\byear{1998}).
\btitle{Large Deviations Techniques and Applications},
\bedition{2nd} ed.
\bseries{Applications of Mathematics (New York)}
\bvolume{38}.
\bpublisher{Springer}, \blocation{New York}.
\bid{mr={1619036}}
\bptok{imsref}%
\end{bbook}
%
\endbibitem

\bibitem{DurSC}
%
\begin{bbook}[mr]
\bauthor{\bsnm{Durrett},~\bfnm{Richard}\binits{R.}}
(\byear{1996}).
\btitle{Stochastic Calculus: A Practical Introduction}.
\bpublisher{CRC Press}, \blocation{Boca Raton, FL}.
\bid{mr={1398879}}
\bptok{imsref}%
\end{bbook}
%
\endbibitem

\bibitem{DurDNA}
%
\begin{bbook}[mr]
\bauthor{\bsnm{Durrett},~\bfnm{Richard}\binits{R.}}
(\byear{2008}).
\btitle{Probability Models for {DNA} Sequence Evolution},
\bedition{2nd} ed.
\bpublisher{Springer}, \blocation{New York}.
\bid{mr={2439767}}
\bptok{imsref}%
\end{bbook}
%
\endbibitem

\bibitem{El}
%
\begin{barticle}[auto:STB|2013/10/14|10:36:11]
\bauthor{\bsnm{Elowitz},~\bfnm{M.~B.}\binits{M.~B.}},
\bauthor{\bsnm{Levine},~\bfnm{A.~J.}\binits{A.~J.}},
\bauthor{\bsnm{Siggia},~\bfnm{E.~D.}\binits{E.~D.}} \AND
\bauthor{\bsnm{Swain},~\bfnm{P.~S.}\binits{P.~S.}}
(\byear{2002}).
\btitle{Stochastic gene expression in a single cell}.
\bjournal{Science Signalling}
\bvolume{297}
\bpages{5584}.
\bptok{imsref}%
\end{barticle}
%
\endbibitem

\bibitem{EK}
%
\begin{bbook}[mr]
\bauthor{\bsnm{Ethier},~\bfnm{Stewart~N.}\binits{S.~N.}} \AND
\bauthor{\bsnm{Kurtz},~\bfnm{Thomas~G.}\binits{T.~G.}}
(\byear{1986}).
\btitle{Markov Processes: Characterization and Convergence}.
\bpublisher{Wiley}, \blocation{New York}.
\bid{doi={10.1002/9780470316658}, mr={0838085}}
\bptok{imsref}%
\end{bbook}
%
\endbibitem

\bibitem{FW}
%
\begin{bbook}[mr]
\bauthor{\bsnm{Freidlin},~\bfnm{M.~I.}\binits{M.~I.}} \AND
\bauthor{\bsnm{Wentzell},~\bfnm{A.~D.}\binits{A.~D.}}
(\byear{1998}).
\btitle{Random Perturbations of Dynamical Systems},
\bedition{2nd} ed.
\bseries{Grundlehren der Mathematischen Wissenschaften}
\bvolume{260}.
\bpublisher{Springer}, \blocation{New York}.
\bid{doi={10.1007/978-1-4612-0611-8}, mr={1652127}}
\bptok{imsref}%
\end{bbook}\vadjust{\goodbreak}
%
\endbibitem

\bibitem{GOV}
%
\begin{barticle}[mr]
\bauthor{\bsnm{Galves},~\bfnm{Antonio}\binits{A.}},
\bauthor{\bsnm{Olivieri},~\bfnm{Enzo}\binits{E.}} \AND
\bauthor{\bsnm{Vares},~\bfnm{Maria~Eul{\'a}lia}\binits{M.~E.}}
(\byear{1987}).
\btitle{Metastability for a class of dynamical systems subject to
small random
perturbations}.
\bjournal{Ann. Probab.}
\bvolume{15}
\bpages{1288--1305}.
\bid{issn={0091-1798}, mr={0905332}}
\bptok{imsref}%
\end{barticle}
%
\endbibitem

\bibitem{Pa1}
%
\begin{barticle}[pbm]
\bauthor{\bsnm{Huh},~\bfnm{Dann}\binits{D.}} \AND
\bauthor{\bsnm{Paulsson},~\bfnm{Johan}\binits{J.}}
(\byear{2011}).
\btitle{Non-genetic heterogeneity from stochastic partitioning at cell
division}.
\bjournal{Nat. Genet.}
\bvolume{43}
\bpages{95--100}.
\bid{doi={10.1038/ng.729}, issn={1546-1718}, mid={NIHMS297867}, pii={ng.729},
pmcid={3208402}, pmid={21186354}}
\bptnote{check year}%
\bptok{imsref}%
\end{barticle}
%
\endbibitem

\bibitem{Pa2}
%
\begin{barticle}[auto:STB|2013/10/14|10:36:11]
\bauthor{\bsnm{Huh},~\bfnm{D.}\binits{D.}} \AND
\bauthor{\bsnm{Paulsson},~\bfnm{J.}\binits{J.}}
(\byear{2011}).
\btitle{Random partitioning of molecules at cell division}.
\bjournal{Proc. Natl. Acad. Sci. USA}
\bvolume{108}
\bpages{15004--15009}.
\bptok{imsref}%
\end{barticle}
%
\endbibitem

\bibitem{KT}
%
\begin{bbook}[mr]
\bauthor{\bsnm{Karlin},~\bfnm{Samuel}\binits{S.}} \AND
\bauthor{\bsnm{Taylor},~\bfnm{Howard~M.}\binits{H.~M.}}
(\byear{1981}).
\btitle{A Second Course in Stochastic Processes}.
\bpublisher{Academic Press},
\blocation{New York}.
\bid{mr={0611513}}
\bptok{imsref}%
\end{bbook}
%
\endbibitem

\bibitem{KYMvO}
%
\begin{barticle}[auto:STB|2013/10/14|10:36:11]
\bauthor{\bsnm{Kaufmann},~\bfnm{B.~B.}\binits{B.~B.}},
\bauthor{\bsnm{Yang},~\bfnm{Q.}\binits{Q.}},
\bauthor{\bsnm{Mettetal},~\bfnm{J.~T.}\binits{J.~T.}}
\AND
\bauthor{\bparticle{van} \bsnm{Oudenaarden},~\bfnm{A.}\binits{A.}}
(\byear{2007}).
\btitle{Heritable stochastic switching revealed by single cell genealogy}.
\bjournal{PLOS Biology}
\bvolume{5}
\bpages{1973--1980}.
\bptok{imsref}%
\end{barticle}
%
\endbibitem

\bibitem{Ku70}
%
\begin{barticle}[auto]
\bauthor{\bsnm{Kurtz},~\bfnm{T.~G.}\binits{T.~G.}}
(\byear{1970}).
\btitle{Solutions of ordinary differential equations as limits of pure
jump Markov processes}.
\bjournal{J. Appl. Probab.}
\bvolume{7}
\bpages{49--58}.
\bptnote{check year}%
\bptok{imsref}%
\end{barticle}
%
\endbibitem

\bibitem{Ku71}
%
\begin{barticle}[mr]
\bauthor{\bsnm{Kurtz},~\bfnm{T.~G.}\binits{T.~G.}}
(\byear{1971}).
\btitle{Limit theorems for sequences of jump {M}arkov processes approximating
ordinary differential processes}.
\bjournal{J. Appl. Probab.}
\bvolume{8}
\bpages{344--356}.
\bid{issn={0021-9002}, mr={0287609}}
\bptok{imsref}%
\end{barticle}
%
\endbibitem

\bibitem{Ku77}
%
\begin{barticle}[mr]
\bauthor{\bsnm{Kurtz},~\bfnm{Thomas~G.}\binits{T.~G.}}
(\byear{1977/78}).
\btitle{Strong approximation theorems for density dependent {M}arkov chains}.
\bjournal{Stochastic Process. Appl.}
\bvolume{6}
\bpages{223--240}.
\bid{issn={0304-4149}, mr={0464414}}
\bptok{imsref}%
\end{barticle}
%
\endbibitem

\bibitem{KuAPP}
%
\begin{bbook}[mr]
\bauthor{\bsnm{Kurtz},~\bfnm{Thomas~G.}\binits{T.~G.}}
(\byear{1981}).
\btitle{Approximation of Population Processes}.
\bseries{CBMS-NSF Regional Conference Series in Applied Mathematics}
\bvolume{36}.
\bpublisher{SIAM},
\blocation{Philadelphia, PA.}
\bid{mr={0610982}}
\bptok{imsref}%
\end{bbook}
%
\endbibitem

\bibitem{MA}
%
\begin{barticle}[pbm]
\bauthor{\bsnm{McAdams},~\bfnm{H.~H.}\binits{H.~H.}} \AND
\bauthor{\bsnm{Arkin},~\bfnm{A.}\binits{A.}}
(\byear{1999}).
\btitle{It's a noisy business! Genetic regulation at the nanomolar scale}.
\bjournal{Trends Genet.}
\bvolume{15}
\bpages{65--69}.
\bid{issn={0168-9525}, pii={S0168-9525(98)01659-X}, pmid={10098409}}
\bptok{imsref}%
\end{barticle}
%
\endbibitem

\bibitem{OV}
%
\begin{bbook}[mr]
\bauthor{\bsnm{Olivieri},~\bfnm{Enzo}\binits{E.}} \AND
\bauthor{\bsnm{Vares},~\bfnm{Maria~Eul{\'a}lia}\binits{M.~E.}}
(\byear{2005}).
\btitle{Large Deviations and Metastability}.
\bseries{Encyclopedia of Mathematics and Its Applications}
\bvolume{100}.
\bpublisher{Cambridge Univ. Press}, \blocation{Cambridge}.
\bid{doi={10.1017/CBO9780511543272}, mr={2123364}}
\bptok{imsref}%
\end{bbook}
%
\endbibitem

\bibitem{LydR}
%
\begin{barticle}[auto:STB|2013/10/14|10:36:11]
\bauthor{\bsnm{Robert},~\bfnm{L.}\binits{L.}},
\bauthor{\bsnm{Paul},~\bfnm{G.}\binits{G.}},
\bauthor{\bsnm{Chen},~\bfnm{Y.}\binits{Y.}},
\bauthor{\bsnm{Taddei},~\bfnm{F.}\binits{F.}},
\bauthor{\bsnm{Baigl},~\bfnm{D.}\binits{D.}} \AND
\bauthor{\bsnm{Lindner},~\bfnm{A.~B.}\binits{A.~B.}}
(\byear{2010}).
\btitle{Pre-dispositions and epigenetic inheritance in the Escherichia coli
lactose operon bistable switch}.
\bjournal{Molecular Systems Biology}
\bvolume{6}
\bpages{1}.
\bptok{imsref}%
\end{barticle}
%
\endbibitem

\bibitem{SPA}
%
\begin{barticle}[auto:STB|2013/10/14|10:36:11]
\bauthor{\bsnm{Samoilov},~\bfnm{M.}\binits{M.}},
\bauthor{\bsnm{Plyasunov},~\bfnm{S.}\binits{S.}} \AND
\bauthor{\bsnm{Arkin},~\bfnm{A.}\binits{A.}}
(\byear{2005}).
\btitle{Stochastic amplification and signaling in enzymatic futile cycles
through noise-induced bistability with oscillations}.
\bjournal{Proc. Natl. Acad. Sci. USA}
\bvolume{102}
\bpages{2310--2315}.
\bptok{imsref}%
\end{barticle}
%
\endbibitem

\bibitem{Sch}
%
\begin{barticle}[auto:STB|2013/10/14|10:36:11]
\bauthor{\bsnm{Schl{\"o}gl},~\bfnm{F.}\binits{F.}}
(\byear{1972}).
\btitle{Chemical reaction models for non-equilibrium phase transitions}.
\bjournal{Zeitschrift F\"ur Physik A}
\bvolume{253}
\bpages{147--161}.
\bptok{imsref}%
\end{barticle}
%
\endbibitem

\bibitem{SW}
%
\begin{bbook}[mr]
\bauthor{\bsnm{Shwartz},~\bfnm{Adam}\binits{A.}} \AND
\bauthor{\bsnm{Weiss},~\bfnm{Alan}\binits{A.}}
(\byear{1995}).
\btitle{Large Deviations for Performance Analysis: Queues,
Communications, and Computing}.
\bpublisher{Chapman \& Hall}, \blocation{London}.
\bid{mr={1335456}}
\bptok{imsref}%
\end{bbook}
%
\endbibitem

\bibitem{VQ}
%
\begin{barticle}[auto:STB|2013/10/14|10:36:11]
\bauthor{\bsnm{Velella},~\bfnm{M.}\binits{M.}} \AND
\bauthor{\bsnm{Qian},~\bfnm{H.}\binits{H.}}
(\byear{2009}).
\btitle{Stochastic dynamical and non-equilibrium thermodynamics of a bistable
chemical system: The Schl\"ogl model revisited}.
\bjournal{Journal of the Royal Society Interface}
\bvolume{6}
\bpages{925--940}.
\bptok{imsref}%
\end{barticle}
%
\endbibitem

\bibitem{ZSSSG}
%
\begin{barticle}[pbm]
\bauthor{\bsnm{Zong},~\bfnm{Chenghang}\binits{C.}},
\bauthor{\bsnm{So},~\bfnm{Lok Hang}\binits{L.~H.}},
\bauthor{\bsnm{Sep{\'{u}}lveda},~\bfnm{Leonardo~A.}\binits{L.~A.}},
\bauthor{\bsnm{Skinner},~\bfnm{Samuel~O.}\binits{S.~O.}} \AND
\bauthor{\bsnm{Golding},~\bfnm{Ido}\binits{I.}}
(\byear{2010}).
\btitle{Lysogen stability is determined by the frequency of activity bursts
from the fate-determining gene}.
\bjournal{Mol. Syst. Biol.}
\bvolume{6}
\bpages{440}.
\bid{doi={10.1038/msb.2010.96}, issn={1744-4292}, pii={msb201096},
pmcid={3010116}, pmid={21119634}}
\bptok{imsref}%
\end{barticle}
%
\endbibitem

\end{thebibliography}
\end{document}